\newcommand{\mb}{\mathbb}
\newcommand{\im}{{\operatorname{im}}}
\newcommand{\del}{\partial}
\newcommand{\coker}{{\rm coker}}
\newcommand{\ZZ}{\mathbb{Z}}
\newcommand{\QQ}{\mathbb{Q}}
\newcommand{\RR}{\mathbb{R}}
\newcommand{\CC}{\mathbb{C}}
\newcommand{\Hom}{\mathrm{Hom}}
\newcommand{\one}{\mathbf{1}}
\newcommand{\Double}{\mathrm{Double}}
\newcommand{\Syl}{\mathrm{Syl}}
\newcommand{\redZH}{\overline{\ZZ H}}
\newcommand{\delT}{\delta}
\newcommand{\sym}{\mathrm{sym}}
\newcommand{\antisym}{\mathrm{skew}}
\newcommand{\crown}{\mathrm{Crown}}
\begin{document}

\theoremstyle{plain}
  \newtheorem{theorem}{Theorem}[section]
  \newtheorem{proposition}[theorem]{Proposition}
  \newtheorem{lemma}[theorem]{Lemma}
  \newtheorem{corollary}[theorem]{Corollary}
  \newtheorem{conjecture}[theorem]{Conjecture}
\theoremstyle{definition}
  \newtheorem{definition}[theorem]{Definition}
  \newtheorem{example}[theorem]{Example}
  \newtheorem{examples}[theorem]{Examples}
  \newtheorem{question}[theorem]{Question}
  \newtheorem{problem}[theorem]{Problem}
 \theoremstyle{remark}
  \newtheorem{remark}[theorem]{Remark}

\setcounter{MaxMatrixCols}{20}

\numberwithin{equation}{section}

\tikzstyle{vertex}=[circle,fill=black!20,minimum size=15pt,inner sep=0pt]
\tikzstyle{selected vertex} = [vertex, fill=red!24]
\tikzstyle{edge} = [draw,thin,-]
\tikzstyle{weight} = [font=\small]
\tikzstyle{selected edge} = [draw,line width=5pt,-,red!50]
\tikzstyle{ignored edge} = [draw,line width=5pt,-,black!20]

\keywords{Graph, covering, signed, voltage, critical, sandpile group, crown, functorial, morphism, double}
\subjclass{05C22}

\thanks{This project was undertaken during an REU at the Univ. of Minnesota School of Mathematics in Summer 2012,
co-mentored by G. Musiker, P. Pylyavskyy, V. Reiner and D. Stanton, and supported by NSF RTG grant number DMS-1148634.
The authors thank the participants during this REU for their helpful comments and suggestions throughout.}

\title[Critical groups of covering, voltage and signed graphs]%
{Critical groups of covering, voltage and signed graphs} 

\author{Victor Reiner}
\address{School of Mathematics, Univ. of Minnesota\\ Minneapolis, MN 55455}
\author{Dennis Tseng} 
\address{Dept. of Mathematics, Massachusetts Institute of Technology \\ Cambridge, MA 02139}

\email{reiner@math.umn.edu}
\email{dennisctseng@gmail.com} 

\begin{abstract}
Graph coverings are known to induce 
surjections of their critical groups.  Here we describe 
the kernels of these morphisms in terms of data parametrizing the covering. 
Regular coverings are parametrized by voltage graphs, and the above kernel can be identified
with a naturally defined voltage graph critical group.  
For double covers, the voltage graph is a signed graph, and the theory takes a 
particularly pleasant form, leading also to a theory of double covers of signed graphs.  
\end{abstract}

\maketitle


\section{Introduction}
\label{Introduction}

This paper studies graph coverings and critical groups for undirected {\it multigraphs}
$G=(V,E)$; here $E$ is a multiset of edges, with self-loops allowed.
An example graph covering $\tilde{G} \rightarrow G$ is shown here,
where the map sends an edge or vertex of $\tilde{G}$ to the corresponding edge or vertex of $G$ by ignoring the $+/-$ subscript:

\begin{center}
\begin{tikzpicture}[->,scale=0.25, auto,swap]
    \foreach \pos/\name in {{(0,0)/2_+},{(20,0)/1_+},{(0,9)/2_-},{(20,9)/1_-},
                            {(0,-20)/2},{(20,-20)/1}}
        \node[vertex] (\name) at \pos {$\name$};

    \foreach \source/ \dest/\arclabel in {1_+/2_+/a_+}
        \path (\source) edge[bend left=20] node[auto] {$\arclabel$} (\dest);
    \foreach \source/ \dest/\arclabel in {1_+/2_+/b_+}
        \path (\source) edge[bend left=40] node[auto] {$\arclabel$} (\dest);
    \foreach \source/ \dest/\arclabel in {1_+/2_+/c_+}
        \path (\source) edge[bend left=60] node[auto] {$\arclabel$} (\dest);
    \foreach \source/ \dest/\arclabel in {1_+/2_-/d_+}
        \path (\source) edge[bend left=-15] node[left, near end] {$\arclabel$} (\dest);
    \foreach \source/ \dest/\arclabel in {1_+/2_-/e_+}
        \path (\source) edge[bend left=0] node[left, near end] {$\arclabel$} (\dest);
    \foreach \source/ \dest/\arclabel in {1_+/2_-/f_+}
        \path (\source) edge[bend left=15] node[left, near end] {$\arclabel$} (\dest);

    \foreach \source/ \dest/\arclabel in {1_-/2_-/a_-}
        \path (\source) edge[bend right=20] node[auto] {$\arclabel$} (\dest);
    \foreach \source/ \dest/\arclabel in {1_-/2_-/b_-}
        \path (\source) edge[bend right=40] node[auto] {$\arclabel$} (\dest);
    \foreach \source/ \dest/\arclabel in {1_-/2_-/c_-}
        \path (\source) edge[bend right=60] node[auto] {$\arclabel$} (\dest);
    \foreach \source/ \dest/\arclabel in {1_-/2_+/d_-}
        \path (\source) edge[bend left=-15] node[left, near start] {$\arclabel$} (\dest);
    \foreach \source/ \dest/\arclabel in {1_-/2_+/e_-}
        \path (\source) edge[bend left=0] node[left, near start] {$\arclabel$} (\dest);
    \foreach \source/ \dest/\arclabel in {1_-/2_+/f_-}
        \path (\source) edge[bend left=15] node[left, near start] {$\arclabel$} (\dest);

   \foreach \source/ \dest/\arclabel in {1_+/1_-/g_+}
        \path (\source) edge[bend right=10] node[near end] {$\arclabel$} (\dest);
    \foreach \source/ \dest/\arclabel in {1_+/1_-/h_+}
        \path (\source) edge[bend right=30] node[near end] {$\arclabel$} (\dest);
    \foreach \source/ \dest/\arclabel in {1_+/1_-/i_+}
        \path (\source) edge[bend right=50] node[near end] {$\arclabel$} (\dest);

    \foreach \source/ \dest/\arclabel in {1_-/1_+/g_-}
        \path (\source) edge[bend right=10] node[left, near end] {$\arclabel$} (\dest);
    \foreach \source/ \dest/\arclabel in {1_-/1_+/h_-}
        \path (\source) edge[bend right=30] node[left, near end] {$\arclabel$} (\dest);
    \foreach \source/ \dest/\arclabel in {1_-/1_+/i_-}
        \path (\source) edge[bend right=50] node[left, near end] {$\arclabel$} (\dest);

\draw[arrows=->,line width=1pt](10,-7)--(10,-10);

    \foreach \source/ \dest/\arclabel in {1/2/a}
        \path (\source) edge[bend left=15] node[auto] {$\arclabel$} (\dest);
    \foreach \source/ \dest/\arclabel in {1/2/b}
        \path (\source) edge[bend left=45] node[auto] {$\arclabel$} (\dest);
    \foreach \source/ \dest/\arclabel in {1/2/c}
        \path (\source) edge[bend left=75] node[auto] {$\arclabel$} (\dest);
    \foreach \source/ \dest/\arclabel in {1/2/d}
        \path (\source) edge[bend right=15] node[auto] {$\arclabel$} (\dest);
    \foreach \source/ \dest/\arclabel in {1/2/e}
        \path (\source) edge[bend right=45] node[auto] {$\arclabel$} (\dest);
    \foreach \source/ \dest/\arclabel in {1/2/f}
        \path (\source) edge[bend right=75] node[auto] {$\arclabel$} (\dest);

\foreach \source in {1}    
	\path[every node/.style={font=\sffamily\small}]
        (\source)   edge[in=20, out=50, loop] node[right] {$i$}  (\source);
\foreach \source in {1}    
	\path[every node/.style={font=\sffamily\small}]
        (\source)   edge[in=-30, out=0, loop] node[right] {$h$}  (\source);
\foreach \source in {1}    
	\path[every node/.style={font=\sffamily\small}]
        (\source)   edge[in=-80, out=-50, loop] node[right] {$g$}  (\source);
\end{tikzpicture}
\end{center}

The {\it critical group} $K(G)$
is a subtle isomorphism invariant of $G$ in the form of a finite abelian group,
whose cardinality is the number of {\it maximal forests} in $G$.  To present $K(G)$,
one can introduce the {\it (signed) node-edge incidence} 
matrix $\del:=\del_G$ for $G$ having rows indexed by $V$, columns indexed by $E$, as we now explain.  
One defines $\del$ by first fixing an arbitrary orientation of the edge 
set $E$.  Then one lets the column of $\del$ 
indexed by an edge $e$ in $E$ that has 
been oriented from vertex $u$ to $v$ be the difference vector $+u-v$,
regarding each $v$ in $V$ as a standard basis vector for $\RR^{V}$.  
One can regard $\del$ as a map $\ZZ^E \rightarrow \ZZ^V$, and 
define $K(G)$ via either of these equivalent 
presentations (see Proposition~\ref{concordance-prop} below)
\begin{align}
\label{K(G)-Kirchoff-presentation}
K(G)&:= \im \del / \im \del \del^t  \\
\label{K(G)-edge-presentation}     
   & \cong \ZZ^E / \left( \im \del^t + \ker \del \right) 
\end{align}
where $\del^t$ is the  map $\ZZ^V \rightarrow \ZZ^E$ corresponding to the
transpose matrix of $\del$.
The presentation \eqref{K(G)-Kirchoff-presentation} allows one to
compute the structure of $K(G)$ from the
nonzero entries $d_1,d_2,\ldots,d_t$ in the Smith normal form of the {\it graph Laplacian matrix}
$L(G):= \del \del^t$ appearing above: 
$$
K(G) \cong \bigoplus_{i=1}^t \ZZ_{d_i}
$$
where $\ZZ_d:=\ZZ/d\ZZ$ denotes the cyclic group of order $d$.  

\vskip.1in
\noindent
{\bf Example.}
The graphs in the above covering $\tilde{G} \rightarrow G$ 
have node-edge incidence matrices
$$
\begin{aligned}
\del_G&=
\bordermatrix{ 
  & a & b & c & d & e & f & g & h & i \cr
1 &+1 &+1 &+1 &+1 &+1 &+1 & 0 & 0 & 0 \cr
2 &-1 &-1 &-1 &-1 &-1 &-1 & 0 & 0 & 0 
}\\
\del_{\tilde{G}}&=
\bordermatrix{ 
     &a_+&b_+&c_+&a_-&b_-&c_-&d_+&e_+&f_+&d_-&e_-&f_-&g_+&h_+&i_+&g_-& h_- & i_- \cr
1_+  &+1 &+1 &+1 &0  &0  &0  & 0 & 0 & 0 &+1 &+1 &+1 &+1 &+1 &+1 &-1 &-1 &-1 \cr
1_-  &0  &0  &0  &+1 &+1 &+1 &+1 &+1 &+1 &0  &0  &0  &-1 &-1 &-1 &+1 &+1 &+1 \cr
2_+  &-1 &-1 &-1 &0  &0  &0  &-1 &-1 &-1 &0  &0  &0  &0  &0  &0  & 0 & 0 & 0 \cr
2_-  &0  &0  &0  &-1 &-1 &-1 & 0 & 0 & 0 &-1 &-1 &-1 &0  &0  &0  & 0 & 0 & 0 
}
\end{aligned}
$$
from which one obtains the Laplacian matrices
$$
L(G)=\del_G \del_G^t=
\bordermatrix{ 
  & 1 & 2 \cr
1 & 6 & -6 \cr
2 &-6 & 6 \cr
}
\qquad
\text{ and }
\qquad
L(\tilde{G})=\del_{\tilde{G}} \del_{\tilde{G}}^t=
\bordermatrix{ 
    & 1_+ & 1_- & 2_+ & 2_- \cr
1_+ &+12   &-6   &-3  &-3\cr
1_- &-6   &+12   &-3  &-3\cr
2_+ &-3   &-3    &+6   &0 \cr
2_- &-3   &-3    &0     &+6
}
$$
whose Smith normal forms 
$$
\left(
\begin{matrix}
6 & 0 \cr
0 & 0 \cr
\end{matrix}\right)
\qquad
\text{ and }
\qquad
\left(\begin{matrix}
3   &0   &0  &0\cr
0   &3   &0  &0\cr
0   &0    &36   &0 \cr
0   &0    &0     &0
\end{matrix}\right)
$$
allow one to read off their critical groups:
$$
\begin{array}{rll}
K(G)&\cong \ZZ_6 &\cong \ZZ_2 \oplus \ZZ_3,\\
K(\tilde{G})&\cong \ZZ_3^2 \oplus \ZZ_{36} &\cong \ZZ_{4} \oplus \ZZ^2_3 \oplus \ZZ_{9}.
\end{array}
$$

\vskip.1in

Many papers on critical groups have computed examples of $K(G)$ 
via the Smith normal form of $L(G)$.  
On the other hand, there is some literature relating critical groups for different graphs in a {\it functorial} fashion, having
roots in an early paper\footnote{Berman \cite{Berman} worked
not with $K(G)$, but rather the group $B_G(A)$ of 
{\it bicycles over $A$} for $G$ for $A$ an abelian group.  One
can check that $B(G,A)=\Hom_\ZZ(K(G),A)$, that is, $B(G,\QQ/\ZZ)$ is the Pontryagin dual group of $K(G)$, hence isomorphic to $K(G)$.} of Berman \cite{Berman}, work of Lorenzini \cite{Lorenzini}, and unpublished work of Treumann \cite{Treumann}; see \cite{Berget, BMMPR} for some notable applications of this functoriality;  the theory is 
reviewed in Section~\ref{functoriality-subsection} below.  
More recently, the role of critical groups in the analogy between graphs and algebraic curves, discussed originally by Bacher, de la Harpe and Nagnibeda \cite{BHN} (see also Biggs \cite{Biggs}) has been extended further to {\it chip-firing} or {\it sandpile groups} of metric graphs and the theory of tropical geometry, where some of these notions of functoriality appear also in work of Urakawa \cite{Urakawa} and Baker and Norine \cite{BakerNorine1, BakerNorine2} under the name of {\it harmonic morphisms}; for a recent survey and references, 
see Perkinson et al \cite{Perkinson}.

We focus here on the interaction of critical groups with {\it graph coverings}.
The above work of Berman \cite{Berman} and Treumann \cite{Treumann} already showed that
covering maps of graphs $\tilde{G} \overset{\pi} \rightarrow G$
induce surjections $K(\tilde{G}) \rightarrow K(G)$ of their critical groups.
Our goal is to study this surjection, describing its kernel, 
and use this to gain information about $K(\tilde{G})$ from knowledge of $K(G)$.

Section~\ref{coverings-splittings-section} reviews graph coverings,
and proves the easy result that for an $m$-sheeted graph covering 
$\tilde{G} \rightarrow G$, the induced surjection $K(\tilde{G}) \rightarrow K(G)$
splits at all $p$-primary components for primes $p$ that do not divide $m$.

Section~\ref{regular-coverings-section} deals with 
graph coverings which are {\it regular}, in the sense that
$G$ is the quotient $\tilde{G}/H$ for a finite group $H$ acting freely on $\tilde{G}$.
Here one can take advantage of the Gross-Tucker \cite{Gross} encoding of a regular covering 
$\tilde{G} \rightarrow G$ via an $H$-valued {\it voltage assignment} 
$\beta: E \longrightarrow H$ that simply assigns
an arbitrary {\it voltage} $\beta(e)$ in $H$ to each edge $e$ of $G$;
one often calls such extra structure on $G$ 
an {\it $H$-voltage graph} $G_\beta$.
For such voltage graphs $G_\beta$, we will introduce matrices with
coefficients in the {\it group algebra} $\ZZ H$ that allows us
to define (in 
Sections~\ref{regular-coverings-section}, 
\ref{voltage-short-exact-sequence-section})
a notion of {\it voltage graph critical group} $K(G_\beta)$, 
a finite abelian group that naturally
extends the notion of critical group $K(G)$ for graphs.
More importantly, our first main result shows that 
this voltage graph critical group $K(G_\beta)$ fills the role
of presenting the kernel of the 
surjection $K(\tilde{G}) \rightarrow K(G)$.

\begin{theorem}
\label{regular-covering-exact-sequence}
Any $H$-voltage assignment $G_\beta$ with regular covering
$\tilde{G} \overset{\pi}{\rightarrow} G$  has a short exact sequence
$$
0 \rightarrow K(G_{\beta}) \rightarrow K(\tilde{G}) \rightarrow K(G) \rightarrow 0
$$
which splits when restricted to $p$-primary components for primes $p$ not dividing $|H|$.

In particular, the numbers of maximal forests of $\tilde{G}, G$ are related by
a factor of $|K(G_\beta)|$:
$$
|K(\tilde{G})|=|K(G_\beta)| \cdot |K(G)|.
$$
\end{theorem}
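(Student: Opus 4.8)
The plan is to realize all three critical groups as arising from a single incidence matrix over the group algebra $\ZZ H$, and to deduce the exact sequence from the fundamental short exact sequence of coefficient modules
\[
0 \to \redZH \to \ZZ H \xrightarrow{\;\epsilon\;} \ZZ \to 0,
\]
where $\epsilon$ is the augmentation $h \mapsto 1$ and $\redZH$ is its kernel, the augmentation ideal. First I would record the Gross--Tucker description of the cover in linear-algebraic terms: the fibers identify $\ZZ^{\tilde V} \cong \ZZ H \otimes_\ZZ \ZZ^V$ and $\ZZ^{\tilde E} \cong \ZZ H \otimes_\ZZ \ZZ^E$ as free $\ZZ H$-modules, and under this identification $\del_{\tilde G}$ is exactly the $\ZZ H$-linear map whose matrix (over $\ZZ H$) is the voltage incidence matrix $\del_{G_\beta}$ used to define $K(G_\beta)$ --- the column of an oriented edge $e$ from $u$ to $v$ being $u - \beta(e)\,v$, in the paper's orientation and sidedness conventions. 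Applying $\epsilon$ entrywise collapses every voltage to $1$ and returns $\del_G$; thus augmentation intertwines, $\epsilon_V \circ \del_{\tilde G} = \del_G \circ \epsilon_E$, and likewise for the adjoints $\del^t$ (using that $\epsilon$ commutes with the involution $h \mapsto h^{-1}$ defining the $\ZZ H$-transpose). The map $K(\tilde G) \to K(G)$ of the theorem is the one induced by the fiber-summing map $\epsilon_E$, namely the Berman--Treumann surjection reviewed in Section~\ref{functoriality-subsection}.

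Next I would assemble the comparison diagram. Tensoring the displayed sequence with the free modules $\ZZ^V, \ZZ^E$, and using that $\redZH$ is an ideal (so that $\del_{\tilde G}$ and its adjoint preserve the augmentation-ideal summands $\redZH \otimes \ZZ^V$ and $\redZH \otimes \ZZ^E$), one obtains a commutative diagram with exact rows whose left column is precisely the incidence map defining $K(G_\beta)$, whose middle column is $\del_{\tilde G}$, and whose right column is $\del_G$. Working with the edge presentation \eqref{K(G)-edge-presentation}, so that each critical group appears as the torsion subgroup of a cokernel $\ZZ^E / \im \del^t$, I would apply the snake lemma to the vertical $\del^t$ maps to produce a long exact sequence linking the three cokernels. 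The core of the argument is then to extract from it the asserted short exact sequence of torsion subgroups $0 \to K(G_\beta)\to K(\tilde G)\to K(G)\to 0$.

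Here lies the main obstacle: a critical group is not an exact functor of the coefficient module, being the \emph{torsion} part of a cokernel and governed by the interplay of the cut lattice $\im \del^t$ and the flow lattice $\ker\del$. Concretely, fiber-summing is \emph{not} surjective on flow lattices --- a cycle of $G$ with nontrivial holonomy $\beta(\gamma)$ of order $d$ lifts only to a flow on $\tilde G$ that pushes forward to $d$ times that cycle --- so the connecting homomorphism in the snake sequence is generally nonzero, and one must verify that the free ranks of the three cokernels match up (equivalently, that the independent cycles of $G$ and $\tilde G$ are compatibly accounted for by the voltage data) so that no spurious torsion leaks between free and torsion parts. This bookkeeping, together with the definition of $K(G_\beta)$ engineered in Sections~\ref{regular-coverings-section}, \ref{voltage-short-exact-sequence-section} precisely to be the torsion subgroup attached to the $\redZH$-column, is what identifies $\ker\bigl(K(\tilde G)\to K(G)\bigr)$ with $K(G_\beta)$ and thereby establishes exactness.

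Finally I would treat the splitting and the order formula. Away from the primes dividing $|H|$ the element $e=\tfrac{1}{|H|}\sum_{h\in H} h$ is an idempotent splitting $0\to\redZH\to\ZZ H\to\ZZ\to 0$, so the entire comparison diagram --- and hence the short exact sequence of critical groups --- splits after inverting $|H|$; this recovers and is consistent with the splitting-at-good-primes result of Section~\ref{coverings-splittings-section}. Since $K(\tilde G)$ and $K(G)$ are finite for connected graphs, finiteness of $K(G_\beta)$ follows from exactness, and the multiplicativity $|K(\tilde G)|=|K(G_\beta)|\cdot|K(G)|$ is then immediate from the short exact sequence of finite abelian groups.
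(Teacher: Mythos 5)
There is a genuine gap, and it sits exactly where the content of the theorem lies. Your plan is to run the snake lemma over the coefficient sequence $0 \to \ker\epsilon \to \ZZ H \xrightarrow{\epsilon} \ZZ \to 0$ and then pass from the resulting cokernels of the $\del^t$ maps to critical groups by taking ``torsion subgroups of $\ZZ^E/\im \del^t$.'' But a critical group is \emph{not} the torsion subgroup of $\ZZ^E/\im\del^t$: for the graph with two vertices joined by two parallel edges one has $\im\del^t = \ZZ(1,1) \subset \ZZ^2$, so $\ZZ^E/\im\del^t \cong \ZZ$ is torsion-free, while $K(G) \cong \ZZ_2$. The critical group is $\ZZ^E/(\im\del^t + \ker\del)$, and the cycle lattice $\ker\del$ cannot be recovered by a torsion functor applied to $\ZZ^E/\im\del^t$. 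Consequently the six-term snake sequence relating the three cokernels does not specialize to the asserted sequence of critical groups, and the step you describe as ``bookkeeping'' --- identifying $\ker\bigl(K(\tilde{G}) \to K(G)\bigr)$ with $K(G_\beta)$ --- is precisely the assertion to be proven; saying that $K(G_\beta)$ was ``engineered'' to make this true is circular, since the paper defines $K(G_\beta)$ as $\redZH^E/\bigl(\im \overline{\del}_{G_\beta}^* + \ker \overline{\del}_{G_\beta}\bigr)$, not as any torsion subgroup. You correctly observe that the connecting homomorphism is nonzero (push-forward is not surjective on flow lattices), but your outline never gets past that obstacle.

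A second, related slip: the paper's $\redZH$ is the \emph{quotient} ring $\ZZ H/\ZZ c$ with $c = \sum_{h\in H} T_h$, not the augmentation ideal $\ker\epsilon$. These are distinct $\ZZ H$-lattices of rank $|H|-1$; indeed the natural map $\ker\epsilon \hookrightarrow \ZZ H \twoheadrightarrow \ZZ H/\ZZ c$ is injective with cokernel $\ZZ/|H|\ZZ$. A construction carried out inside $\ker\epsilon$ produces a priori a different group than the paper's $K(G_\beta)$, and identifying the two requires a duality argument your proposal omits. This is in fact how the paper threads the needle: rather than computing $\ker(\pi)$ directly, it computes $\coker\bigl(\pi^t\colon K(G) \to K(\tilde{G})\bigr)$, which is immediate from the edge presentation of Corollary~\ref{regular-cover-critical-group-reformulated}, because $\pi^t(\ZZ^E) = (\ZZ c)^E$: adding this to the denominator of $K(\tilde{G}) \cong (\ZZ H)^E/\bigl(\im\del_{G_\beta}^* + \ker\del_{G_\beta}\bigr)$ collapses $(\ZZ H)^E$ to $\redZH^E$ and yields $K(G_\beta)$ on the nose; Pontryagin duality (Proposition~\ref{Pontryagin-duality-prop}) together with the abstract self-duality of finite abelian groups then converts this cokernel statement into the kernel statement. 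Your splitting argument via the idempotent $\tfrac{1}{|H|}\sum_{h} T_h$ is fine and amounts to the paper's relation $\pi\pi^t = |H|\cdot\one_{K(G)}$ from Proposition~\ref{covering-splitting-proposition}, but without the kernel identification neither the exactness nor the order formula $|K(\tilde{G})| = |K(G_\beta)|\cdot|K(G)|$ is established.
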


As an important special case, 
{\it double (2-sheeted) coverings} $\tilde{G} \rightarrow G$ are always regular,
with $G=\tilde{G}/H$ for the two element group $H=\ZZ_2=\{+,-\}$.  
One can then interpret the $H$-valued voltage assignment on the edges of $G$
as a {\it signed graph} $G_{\pm}$
in the sense of Zaslavsky\footnote{Disallowing half-loops for the moment, although they will be incorporated eventually in Section~\ref{general-signed-graph-section}.} \cite{Zaslavsky}, 
The double cover $\tilde{G} \rightarrow G$
parametrized by a signed graph $G_\pm$ is particularly simple: there
are two vertices $v_+,v_-$ lying above each vertex $v$ of $G$, and
each edge $e=\{u,v\}$ in $G$ gives rise to two edges in $\tilde{G}$, namely
\begin{enumerate}
\item[$\bullet$]
$e_+=\{u_+,v_+\}, e_-=\{u_-,v_-\}$ 
 if $e$  is labelled $+$ in $G_\pm$, and
\item[$\bullet$]
$e_+=\{u_+,v_-\}, e_-=\{u_-,v_+\}$ 
 if $e$  is labelled $-$ in $G_\pm$.
\end{enumerate}
Zaslavsky \cite{Zaslavsky} associated to a signed graph $G_\pm$ an node-edge incidence matrix
$\del=\del_{G_\pm}$ in $\ZZ^{|V| \times |E|}$ generalizing the definition for graphs.  In his $\del$, the column indexed by an edge $e$ in $E$ having positive sign $+$ (resp. negative sign $-$)
that has been oriented from vertex $u$ to $v$ will be the vector $+u-v$ (resp. $+u+v$),
where again one regards each $v$ in $V$ as a standard basis vector in $\RR^{V}$.  
Regarding $\del$ as a map $\ZZ^E \rightarrow \ZZ^V$, as before, one can
define $K(G_\pm)$ via the equivalent presentations
\begin{equation}
\label{signed-K(G)-presentation}
K(G_\pm)= \im \del / \im \del \del^t  
 \cong \ZZ^E / \left( \im \del^t + \ker \del \right) 
\end{equation}
where $\del^t$ is the transpose matrix considered as a map $\ZZ^V \rightarrow \ZZ^E$.
The {\it signed graph Laplacian matrix} $L(G_\pm):=\del \del^t$ appearing here already figured into Zaslavsky's signed version of the matrix tree theorem \cite[Thm. 8.A.4]{Zaslavsky}, allowing us to interpret the cardinality $|K(G_\pm)|$
as a weighted count of objects that one can think of as {\it maximal forests} in $G_\pm$;
see Section~\ref{signed-matrix-tree-subsection} below.  
Theorem~\ref{regular-covering-exact-sequence} then specializes as follows.

\begin{theorem}
\label{unsigned-double-cover-theorem}
For each signed graph $G_{\pm}$, parametrizing a graph double covering
$\tilde{G} \rightarrow G$, one has a short exact sequence of critical groups
$$
0 \rightarrow K(G_\pm) \rightarrow K(\tilde{G}) \rightarrow K(G) \rightarrow 0,
$$
splitting on restriction to $p$-primary components for odd primes $p$.
In particular, $|K(\tilde{G})| = |K(G_\pm)| \cdot |K(G)|$.
\end{theorem}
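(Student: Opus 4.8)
The plan is to obtain Theorem~\ref{unsigned-double-cover-theorem} as the specialization of Theorem~\ref{regular-covering-exact-sequence} to the deck group $H=\ZZ_2=\{+,-\}$. Every double covering $\tilde{G}\to G$ is regular with this $H$, and the $\ZZ_2$-voltage assignment $\beta$ is precisely the sign data of $G_\pm$ (the edges with $\beta(e)=-$ being the negative edges). Theorem~\ref{regular-covering-exact-sequence} then immediately furnishes the short exact sequence $0\to K(G_\beta)\to K(\tilde G)\to K(G)\to 0$, splitting at the primary components for primes not dividing $|H|=2$, which are exactly the odd primes. Hence the entire statement follows once one identifies the voltage graph critical group $K(G_\beta)$ with the signed graph critical group $K(G_\pm)$ defined through Zaslavsky's incidence matrix in \eqref{signed-K(G)-presentation}.

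The crux is therefore this identification, and I would carry it out by specializing the reduced group algebra $\redZH$ underlying the definition of $K(G_\beta)$ from Sections~\ref{regular-coverings-section} and~\ref{voltage-short-exact-sequence-section}. For $H=\ZZ_2$ one has $\ZZ H\cong\ZZ[t]/(t^2-1)$ with $t$ the class of $-$, and modding out by the norm $N=1+t$ gives $\redZH\cong\ZZ$ via the sign character $-\mapsto-1$. Under this isomorphism I would check that the group-algebra node--edge incidence matrix defining $K(G_\beta)$ becomes exactly $\del_{G_\pm}$: an edge $e$ oriented $u\to v$ contributes the column $+u-\beta(e)\cdot v$, and applying $-\mapsto-1$ turns $\beta(e)=+$ into $+u-v$ and $\beta(e)=-$ into $+u+v$, matching Zaslavsky's rule verbatim. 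Since $K(G_\beta)$ and $K(G_\pm)$ are then presented by the same matrix via the same recipe \eqref{signed-K(G)-presentation}, they coincide.

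As an independent check that pins down the whole picture, I would block-diagonalize $\del_{\tilde G}$ in the symmetric/antisymmetric coordinates $v_{\sym}=v_++v_-$ and $v_{\antisym}=v_+-v_-$ on vertices and lifted edges. A direct computation shows the symmetric block sends $e_{\sym}\mapsto u_{\sym}-v_{\sym}$ for every edge, recovering $\del_G$, while the antisymmetric block sends $e_{\antisym}\mapsto u_{\antisym}-v_{\antisym}$ for positive edges and $e_{\antisym}\mapsto u_{\antisym}+v_{\antisym}$ for negative edges, recovering $\del_{G_\pm}$; thus $\del_{\tilde G}$ is conjugate to $\del_G\oplus\del_{G_\pm}$. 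The main subtlety to flag is that this change of basis is an isomorphism only after inverting $2$, since $v_+=\tfrac12(v_{\sym}+v_{\antisym})$; this is exactly why the direct-sum splitting of critical groups is guaranteed only at odd primes, consistent with Theorem~\ref{regular-covering-exact-sequence}. Granting $K(G_\beta)\cong K(G_\pm)$, the short exact sequence and its splitting transport verbatim, and the multiplicativity $|K(\tilde G)|=|K(G_\pm)|\cdot|K(G)|$ is immediate from exactness.
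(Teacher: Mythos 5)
Your proposal is correct and takes essentially the same approach as the paper: specialize Theorem~\ref{regular-covering-exact-sequence} to $H=\ZZ_2$, identify $\redZH\cong\ZZ$ via the sign character $h\mapsto -1$, and observe that under this identification the voltage incidence matrix $\del_{G_\beta}$ becomes exactly Zaslavsky's $\del_{G_\pm}$, so $K(G_\beta)=K(G_\pm)$ and the short exact sequence with its odd-prime splitting transports verbatim. Your symmetric/antisymmetric block-diagonalization of $\del_{\tilde G}$ is a pleasant consistency check (and correctly flags the factor of $2$ obstructing the splitting at $p=2$), but it is not needed once the matrix identification is made.
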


\vskip.1in
\noindent
{\bf Example.}
Our earlier double covering $\tilde{G} \rightarrow G$ is parametrized by
this signed graph $G_{\pm}$:

\begin{center}
\begin{tikzpicture}[->,scale=1.8, auto,swap]
    \foreach \pos/\name in {{(0,0)/2},{(2,0)/1}}
        \node[vertex] (\name) at \pos {$\name$};
    \foreach \source/ \dest/\arclabel/\voltage in {1/2/a/+}
        \path (\source) edge[bend left=15] node[auto] {$\arclabel, \voltage$} (\dest);
    \foreach \source/ \dest/\arclabel/\voltage in {1/2/b/+}
        \path (\source) edge[bend left=45] node[auto] {$\arclabel, \voltage$} (\dest);
    \foreach \source/ \dest/\arclabel/\voltage in {1/2/c/+}
        \path (\source) edge[bend left=75] node[auto] {$\arclabel, \voltage$} (\dest);
    \foreach \source/ \dest/\arclabel/\voltage in {1/2/d/-}
        \path (\source) edge[bend right=15] node[auto] {$\arclabel, \voltage$} (\dest);
    \foreach \source/ \dest/\arclabel/\voltage in {1/2/e/-}
        \path (\source) edge[bend right=45] node[auto] {$\arclabel, \voltage$} (\dest);
    \foreach \source/ \dest/\arclabel/\voltage in {1/2/f/-}
        \path (\source) edge[bend right=75] node[auto] {$\arclabel, \voltage$} (\dest);

\foreach \source in {1}    
	\path[every node/.style={font=\sffamily\small}]
        (\source)   edge[in=20, out=50, loop] node[right] {$i,-$}  (\source);
\foreach \source in {1}    
	\path[every node/.style={font=\sffamily\small}]
        (\source)   edge[in=-30, out=0, loop] node[right] {$h,-$}  (\source);
\foreach \source in {1}    
	\path[every node/.style={font=\sffamily\small}]
        (\source)   edge[in=-80, out=-50, loop] node[right] {$g,-$}  (\source);
\end{tikzpicture}
\end{center}
having node-edge incidence matrix $\del$ and Laplacian matrix 
$$
\del=
\begin{aligned}
&\bordermatrix{ 
  & a & b & c & d & e & f & g & h & i \cr
1 &+1 &+1 &+1 &+1 &+1 &+1 &+2 &+2 &+2 \cr
2 &-1 &-1 &-1 &+1 &+1 &+1 & 0 & 0 & 0 
} &\text{ and }
L(G)&=\del \del^t=
\bordermatrix{ 
  & 1 & 2 \cr
1 & 18 & 0 \cr
2 & 0 & 6 \cr
}.
\end{aligned}
$$
Consequently
$$
\ZZ^2/\im \del \del^t 
\quad \cong \quad 
\ZZ_6 \oplus \ZZ_{18} 
\quad \cong \quad 
\ZZ_2^2 \oplus \ZZ_3 \oplus \ZZ_{9}.
$$
One can check that $\im \del$ here is the
sublattice $\ZZ^2_{\equiv 0 \bmod{2}}$
of index $2$ inside $\ZZ^2$ where the sum of coordinates is even.
Therefore $K(G_\pm)$ is an index $2$ subgroup of $\ZZ^2/\im \del \del^t$.
Thus the answer for $\ZZ^2/\im \del \del^t$ given above forces
$$
K(G_{\pm}) 
\quad \cong \quad
\im \del/\im \del \del^t 
\quad \cong \quad 
\ZZ_2 \oplus \ZZ_3 \oplus \ZZ_{9},
$$
and the short exact sequence from Theorem~\ref{unsigned-double-cover-theorem} takes this form:
$$
\begin{array}{rcccccl}
0 \rightarrow & K(G_\pm) & \rightarrow & K(\tilde{G}) & \rightarrow & K(G) & \rightarrow 0 \\
              & \Vert&             & \Vert        &             & \Vert  & \\
              & \ZZ_2 \oplus \ZZ_3 \oplus \ZZ_{9}& & \ZZ_{4} \oplus \ZZ^2_3 \oplus \ZZ_{9} &   & \ZZ_2 \oplus \ZZ_3  &
\end{array}
$$
Note that its $p$-primary component splits at the odd prime $p=3$
$$
0 \rightarrow \ZZ_3 \oplus \ZZ_9 
   \rightarrow \ZZ_3^2 \oplus \ZZ_9 
    \rightarrow \ZZ_3 
      \rightarrow 0
$$
but does {\it not split} at the prime $p=2$
$$
0 \rightarrow \ZZ_2 \rightarrow \ZZ_4 \rightarrow \ZZ_2 \rightarrow 0.
$$
\vskip.2in

Having developing this theory in the earlier sections,
Section~\ref{diagonal-application-section} describes a 
class of nontrivial examples of regular graph coverings
where the theory is particularly easy to apply, because the
relevant voltage graph critical group $K(G_\beta)$ has the peculiar property
that its presentation involves a {\it diagonal} Laplacian matrix! 

Sections~\ref{general-signed-graph-section},
\ref{signed-graph-double-cover-section},
\ref{short-complex-section} 
return to the special case of
signed graphs, but generalize 
Theorem~\ref{unsigned-double-cover-theorem} in a different direction
than Theorem~\ref{regular-covering-exact-sequence}
The idea is to allow {\it half-loops} as in Zaslavsky's
original paper \cite{Zaslavsky}, and also to introduce
a notion of {\it double-covering of signed graphs} in which all three
players involved (the base, the cover, the voltage assignment) are
{\it signed graphs}.  
This allows us to prove a more flexible
double covering result, Theorem~\ref{signed-graph-double-cover-complex},
which we apply to two more families of examples 
in Section~\ref{signed-graph-double-cover-application-section}.

\section{Review of critical groups}
\label{critical-group-review-section}

\subsection{Presentations of the critical group}

Given a multigraph $G=(V,E)$, as mentioned in the Introduction, we will
let $\ZZ^E, \ZZ^V$ have $\ZZ$-bases indexed by $E, V$, and then fix an orientation $e=(u,v)$ for each edge $e$ in $E$, so as to define
the $\ZZ$-linear {\it node-edge} map and incidence matrix via
$$
\begin{array}{rcl}
\ZZ^E & \overset{\del}{\longrightarrow}& \ZZ^V \\
e & \longmapsto & +u - v
\end{array}
$$
Then $\del=\del_G=\del_\ZZ$ is represented by a matrix in $\ZZ^{V \times E}$
with respect to these bases.  One will also sometimes want to think of
the associated $\RR$-linear 
map $\RR^E  \overset{\del_{\RR}}{\longrightarrow} \RR^V$.

Choosing inner products 
$\langle \cdot, \cdot \rangle_E, \langle \cdot, \cdot \rangle_V$
on $\RR^E, \RR^V$ that make the above bases each orthonormal, the
{\it transpose matrix} $\del^t$ represents the {\it adjoint map}
$\ZZ^V  \overset{\del^t}{\longrightarrow} \ZZ^E$
or 
$\RR^V  \overset{\del_\RR^t}{\longrightarrow} \RR^E$
defined by 
$$
\langle \del x, y \rangle_V
=\langle x, \del^ty \rangle_E 
$$
for all $x$ in $\ZZ^E$ and $y$ in $\ZZ^V$. 
It is easily seen that
\begin{equation}
\label{rational-orthogonal-decomposition}
\RR^E = \im \del_\RR^t \oplus \ker \del_\RR
\end{equation}
is an orthogonal direct sum decomposition.
Here $\ker \del_\RR \subset \RR^E$ and 
$\ker \del \subset \ZZ^E$ are called the {\it cycle space} and
{\it cycle lattice} of $G$, while 
 $\im \del^t_\RR \subset \RR^E$ and 
$\im \del^t \subset \ZZ^E$ are called the {\it bond or cut space} and
{\it bond or cut lattice} of $G$; see \cite{BHN, Biggs, GodsilRoyle}.  The critical group $K(G)$ 
can be thought of as measuring the failure of 
equality in \eqref{rational-orthogonal-decomposition} when working with 
the lattices instead of the $\RR$-linear spaces that they span.

\begin{definition}
\label{edge-presentation}
Define the {\it critical group} 
$$
K(G):= \ZZ^E / \left( \im \del^t + \ker \del \right).
$$
\end{definition}

\noindent
The agreement between the two presentations of $K(G)$ given in \eqref{K(G)-Kirchoff-presentation}
and \eqref{K(G)-edge-presentation}, as well as the two 
presentations of $K(G_\pm)$) in \eqref{signed-K(G)-presentation},
is explained by the following.

\begin{proposition}
\label{concordance-prop}
Given abelian group homomorphisms 
$A \underset{g}{\overset{f}{\rightleftarrows}} B$,
the map $f$ induces an isomorphism
$$
A/ \left( \im g + \ker f \right) \longrightarrow \im f / \im fg.
$$
In particular, applying this to 
$\ZZ^E \underset{\del^t}{\overset{\del}{\rightleftarrows}} \ZZ^V$,
the map $\del$ induces an isomorphism
$$
K(G) = \ZZ^E /  \left( \im \del^t + \ker \del \right)
 \longrightarrow \im \del / \im \del \del^t.
$$
\end{proposition}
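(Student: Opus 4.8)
The plan is to realize the desired isomorphism as the map that $f$ induces on quotients, and then to verify well-definedness, surjectivity, and injectivity all at once via a single kernel computation followed by the first isomorphism theorem. First I would regard $f$ as a surjection $A \twoheadrightarrow \im f$ onto its image, and compose it with the canonical projection $\im f \to \im f/\im fg$, which makes sense because $\im fg \subseteq \im f$ (both living inside $B$, since $fg = f \circ g$ runs $B \to A \to B$). Call the resulting homomorphism $\bar{f} \colon A \to \im f/\im fg$. As a composite of two surjections it is surjective, so it will remain only to compute $\ker \bar{f}$ and invoke $A/\ker\bar{f} \cong \im \bar{f} = \im f/\im fg$.

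The heart of the matter is the identification $\ker \bar{f} = \im g + \ker f$, which I would establish by a double inclusion. For the inclusion $\supseteq$, given $a = g(b) + k$ with $b$ in $B$ and $k$ in $\ker f$, one computes $f(a) = fg(b) + f(k) = fg(b)$, which lies in $\im fg$, so $\bar{f}(a) = 0$. For the inclusion $\subseteq$, suppose $\bar{f}(a) = 0$, that is, $f(a) \in \im fg$; writing $f(a) = fg(b)$ for some $b$ in $B$ gives $f(a - g(b)) = 0$, whence $a - g(b) \in \ker f$ and therefore $a = g(b) + (a - g(b)) \in \im g + \ker f$.

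With the kernel identified, the first isomorphism theorem yields the isomorphism $A/(\im g + \ker f) \xrightarrow{\sim} \im f/\im fg$, and by construction it is exactly the map induced by $f$. The final assertion is then the special case $A = \ZZ^E$, $B = \ZZ^V$, $f = \del$, $g = \del^t$, giving the stated isomorphism for $K(G)$. I expect no genuine obstacle in this argument: it is a purely formal diagram chase. The only point demanding a little care is bookkeeping of domains and codomains, so that $\im g + \ker f$ is correctly seen as a subgroup of $A$ while $\im fg \subseteq \im f$ sit inside $B$; the single computational input is the vanishing $f(k) = 0$ for $k \in \ker f$, which is precisely what collapses the $\ker f$ summand.
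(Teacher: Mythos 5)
Your proof is correct and is essentially the paper's own argument: the paper likewise composes $A \twoheadrightarrow \im f \twoheadrightarrow \im f/\im fg$, notes this kills $\im g + \ker f$ to induce a surjection, and proves injectivity by the identical computation $f(a) = fg(b) \Rightarrow a = g(b) + (a - g(b)) \in \im g + \ker f$. Packaging this as a kernel computation plus the first isomorphism theorem, rather than as well-definedness plus injectivity of the induced map, is only a cosmetic difference.
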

\begin{proof}
The composite of two surjections
$
A \overset{f}{\twoheadrightarrow} \im f \twoheadrightarrow \im f / \im fg
$
annihilates both $\ker f$ and $\im g$, inducing a
surjection $A/ \left( \im g + \ker f \right) \twoheadrightarrow \im f / \im fg$.
To see that this surjection is also injective, note that for 
$a$ in $A$ to represent an element in the kernel of this surjection means
that $f(a)$ lies in $\im fg$, so that $f(a)=f(g(b))$ for some $b$ in $B$. 
This means $a-g(b)$ lies in $\ker(f)$, and hence the expression
$a = g(b) + (a-g(b))$ shows that $a$ represents the zero coset of 
$A/ \left( \im g + \ker f \right)$.
\end{proof}

\subsection{Functoriality and Pontryagin duality}
\label{functoriality-subsection}

Given multigraphs $G_i=(V_i,E_i)$, for $i=1,2$, and
a $\ZZ$-linear map $\ZZ^{E_1} \overset{f}{\rightarrow} \ZZ^{E_2}$ satisfying 
\begin{equation}
\label{functoriality-assumptions}
\begin{aligned}
f(\im \del_{G_1}^t) & \subset \im \del_{G_2}^t\\
f(\ker \del_{G_1}) & \subset \ker \del_{G_2},
\end{aligned}
\end{equation}
the presentation of $K(G)$ in Definition~\ref{edge-presentation}
shows that $f$ induces a homomorphism $K(G_1) \overset{f}{\longrightarrow} K(G_2)$.
Such homomorphisms will be our fundamental tools.

An important feature in this setting is the fact 
(\cite[Proposition 2.3]{BMMPR}, \cite[Proposition 5]{Treumann}) that
the assumptions \eqref{functoriality-assumptions} are closed
under taking adjoints/transposes:   the adjoint map
$\ZZ^{E_2} \overset{f^t}{\rightarrow} \ZZ^{E_1}$ will also satisfy
$$
\begin{aligned}
f^t(\im \del_{G_2}^t) & \subset \im \del_{G_1}^t\\
f^t(\ker \del_{G_2}) & \subset \ker \del_{G_1}\\
\end{aligned}
$$

\subsection{Pontryagin duality}
\label{Pontryagin-duality-section}

Given the homomorphism $K(G_1) \overset{f}{\rightarrow} K(G_2)$
induced by a map $\ZZ^{E_1} \overset{f}{\rightarrow} \ZZ^{E_2}$,
we will often wish to apply the {\it Pontryagin duality isomorphism} 
\begin{equation}
\label{Pontryagin-dual}
K(G) \cong \hat{K}(G):=\Hom(K(G),\QQ/\ZZ)
\end{equation}
to both of the finite abelian groups $K(G_i)$, and instead consider
the dual morphism
$
\hat{K}(G_2) \overset{\hat{f}}{\rightarrow} \hat{K}(G_1).
$
In the case of critical groups, the isomorphism \eqref{Pontryagin-dual}
is very natural.

\begin{proposition}(\cite[Prop. 2.5]{BMMPR}, \cite[Prop. 9]{Treumann})
\label{Pontryagin-duality-prop}
For multigraphs $G$, the Pontryagin duality isomorphism 
in \eqref{Pontryagin-dual}
can be chosen\footnote{Although not needed, the 
description of the isomorphism in \eqref{Pontryagin-dual}
is as follows.  Letting
$
\pi_{\im \del_\RR^t}: \RR^E \longrightarrow \im \del_\RR^t
$
be {\it orthogonal projection} onto the bond space, send an element of
$K(G)=\ZZ^E /\left( \im \del^t + \ker \del \right)$ 
represented by $x$ in $\ZZ^E$ to the homomorphism $K(G) \rightarrow \QQ/\ZZ$
which maps an element of $K(G)$ represented by $y$ to the additive coset
$\langle \pi_{\im \del_\RR^t} (x), y  \rangle_E +\ZZ$ in $\QQ/\ZZ$.}
so that any map $\ZZ^{E_1} \overset{f}{\rightarrow} \ZZ^{E_2}$ 
satisfying the assumptions \eqref{functoriality-assumptions}
will make the following diagram commute:

\begin{equation*}
\begin{CD}
   K(G_2)   @>{f^t}>>  K(G_1)  \\
    @V{\wr}VV                              @VV{\wr}V     \\
    \hat{K}(G_2)     @>{\hat{f}}>>              \hat{K}(G_1)    
\end{CD}
\end{equation*}
\end{proposition}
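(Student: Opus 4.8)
The plan is to take the explicit isomorphism $\phi_G\colon K(G)\to\hat K(G)$ described in the footnote as the chosen Pontryagin duality isomorphism and to verify directly that it intertwines $f^t$ with $\hat f$. Writing $\pi_G\colon\RR^E\to\im\del_\RR^t$ for orthogonal projection onto the bond space along the cycle space (using the decomposition \eqref{rational-orthogonal-decomposition}), $\phi_G$ sends the class of $x\in\ZZ^E$ to the homomorphism $[y]\mapsto\langle\pi_G(x),y\rangle_E+\ZZ$. The first task is to check that this pairing descends to $K(G)\times K(G)$, and this is where the orthogonal decomposition does the work: if $y\in\ker\del$ then $\langle\pi_G(x),y\rangle_E=0$ since $\pi_G(x)\perp\ker\del_\RR$, while if $y=\del^t z\in\im\del^t$ then the adjunction gives $\langle\pi_G(x),\del^t z\rangle_E=\langle\del\pi_G(x),z\rangle_V=\langle\del x,z\rangle_V\in\ZZ$ (using $\del\pi_G(x)=\del x$), so the value is unchanged mod $\ZZ$; the same computation with the roles of $x$ and $y$ reversed handles the first slot.

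The second task is to confirm $\phi_G$ is an isomorphism. Since $K(G)$ and $\hat K(G)$ are finite of equal order, it suffices to prove the pairing is nondegenerate. If $[x]$ lies in the radical then $\langle\pi_G(x),y\rangle_E\in\ZZ$ for all $y\in\ZZ^E$, forcing $\pi_G(x)\in\ZZ^E\cap\im\del_\RR^t$; then $x-\pi_G(x)\in\ker\del$ shows $[x]=[\pi_G(x)]$, and it remains to see $\ZZ^E\cap\im\del_\RR^t=\im\del^t$, i.e. that the cut lattice is saturated. I would deduce this from the total unimodularity of the graph incidence matrix $\del$, which makes all nonzero elementary divisors of $\del$ equal to $1$ and hence $\coker\del^t$ torsion-free. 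It is worth noting that this is precisely the step that fails for signed graphs, consistent with the proposition being asserted only for multigraphs. I expect this to be the main obstacle, in that it is the only step requiring genuine input about graphs rather than formal manipulation.

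The commutativity is then a short diagram chase that in fact holds on the nose. Chasing $[\xi]\in K(G_2)$ down-then-right and evaluating on $[\eta]\in K(G_1)$ yields $\langle\pi_{G_2}(\xi),f\eta\rangle_{E_2}+\ZZ=\langle f^t\pi_{G_2}(\xi),\eta\rangle_{E_1}+\ZZ$ after passing the adjoint of $f$ across the inner product, whereas right-then-down yields $\langle\pi_{G_1}(f^t\xi),\eta\rangle_{E_1}+\ZZ$. So everything reduces to the identity $\pi_{G_1}\circ f^t=f^t\circ\pi_{G_2}$ on $\RR^{E_2}$. This is immediate from the adjoint form of the functoriality hypotheses \eqref{functoriality-assumptions}, extended over $\RR$: since $f^t$ carries $\im\del_{G_2,\RR}^t$ into $\im\del_{G_1,\RR}^t$ and $\ker\del_{G_2,\RR}$ into $\ker\del_{G_1,\RR}$, applying $f^t$ to the decomposition $\xi=\pi_{G_2}(\xi)+(\xi-\pi_{G_2}(\xi))$ splits $f^t\xi$ into its own bond and cycle parts, whose bond part is precisely $f^t\pi_{G_2}(\xi)$. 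Thus $\pi_{G_1}(f^t\xi)=f^t\pi_{G_2}(\xi)$ and the square commutes. In summary, the formal content is the intertwining $\pi\circ f^t=f^t\circ\pi$ coming from adjoint functoriality, while the one non-formal ingredient is the saturation of the cut lattice underlying the claim that $\phi_G$ is an isomorphism.
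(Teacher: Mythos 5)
Your proof is correct, and it is worth noting that the paper itself gives no proof of this proposition at all: it simply cites \cite[Prop.~2.5]{BMMPR} and \cite[Prop.~9]{Treumann}, with the footnote recording the formula for the isomorphism. What you have done is supply the argument those citations stand in for, built around exactly the isomorphism the footnote describes. All three steps check out: well-definedness of the pairing $([x],[y])\mapsto\langle\pi_G(x),y\rangle_E+\ZZ$ follows from orthogonality of the decomposition \eqref{rational-orthogonal-decomposition} together with $\del\pi_G(x)=\del x$; nondegeneracy correctly reduces to saturation of the cut lattice, $\ZZ^E\cap\im\del^t_\RR=\im\del^t$, which you rightly identify as the one non-formal input and correctly derive from total unimodularity of the incidence matrix (and your aside that this is what breaks for signed graphs, whose incidence matrices have $\pm2$ minors, is exactly the right diagnosis for why the proposition is confined to multigraphs); and commutativity of the square reduces, as you say, to the intertwining $\pi_{G_1}\circ f^t=f^t\circ\pi_{G_2}$, which follows from the adjoint form of \eqref{functoriality-assumptions} extended $\RR$-linearly, since a direct-sum-compatible map must commute with the associated projections. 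The only ingredient you use silently is finiteness of $K(G)$ (needed for $|K(G)|=|\hat K(G)|$), but the paper takes this as known, so nothing is missing in context.
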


\section{Graph coverings, surjections,  and splittings}
\label{coverings-splittings-section}

Here we recall the notion of a graph covering as in Gross and Tucker \cite[\S 2]{Gross}, 
and then prove a refinement of results of Treumann \cite{Treumann} 
and of Baker and Norine \cite{BakerNorine2} for coverings.

\begin{definition}
Given two multigraphs $\tilde{G}=(\tilde{V},\tilde{E})$ and $G=(V,E)$ 
a {\it graph map} is a continuous map $\tilde{G} \overset{\pi}{\rightarrow} G$ 
of their underlying topological spaces that maps the 
interior of each edge of $\tilde{G}$ homeomorphically onto the 
interior of some edge of $G$.  
\end{definition}

In particular, a graph map $\pi$ induces
a set map $\tilde{E} \overset{\pi}{\rightarrow} E$;  considering
what happens via continuity at the endpoints of each edge, it also
induces a set map $\tilde{V} \overset{\pi}{\longrightarrow} V$.
Note that when one has a graph map
$\tilde{G} \overset{\pi}{\rightarrow} G$,
any orientation of the edges of $G$ pulls back to a compatible orientation of
the edges of $\tilde{G}$ in such a way that $f$ preserves
orientation.  Henceforth we will always assume that $\tilde{G}, G$ are oriented
compatibly in this fashion when
writing down node-edge incidence matrices $\del_{\tilde{G}}, \del_G$.

\begin{definition}
Say that a graph map $\tilde{G} \overset{\pi}{\rightarrow} G$ 
is a {\it graph covering} if every vertex of $\tilde{G}$ has
a neighborhood on which the restriction of $\pi$ is a homeomorphism.
\end{definition}

It is not hard see that within a fixed connected component of the base graph $G$,
every vertex $v$ and edge $e$ will have
the same cardinality $m$ for the inverse image sets $\pi^{-1}(v), \pi^{-1}(e)$.

\begin{definition}
Say $\tilde{G} \overset{\pi}{\rightarrow} G$ is an {\it $m$-sheeted cover}
if $|\pi^{-1}(v)|=|\pi^{-1}(e)|=m$ for every component of $G$.
\end{definition}

We come now to the main observation of this section.

\begin{proposition}(cf. Baker-Norine \cite[\S 4]{BakerNorine2}, 
Berman \cite[Thm. 5.7]{Berman}, 
Treumann \cite[Prop. 19]{Treumann})
\label{covering-splitting-proposition}
An $m$-sheeted covering $\tilde{G} \overset{\pi}{\rightarrow} G$ of finite
multigraphs gives rise to a surjection of critical groups
$K(\tilde{G}) \overset{\pi}{\twoheadrightarrow} K(G)$.  

Furthermore, the backward map $K(G) \overset{\pi^t}{\rightarrow} K(\tilde{G})$
satisfies $\pi \pi^t = m \cdot 1_{K(G)}$, and hence splits off the $p$-primary component of $K(G)$ as a direct summand for each prime $p$ that does not divide $m$.
\end{proposition}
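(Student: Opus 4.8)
The plan is to construct the relevant maps at the level of the edge spaces $\ZZ^{\tilde E}$ and $\ZZ^E$, verify they satisfy the functoriality assumptions \eqref{functoriality-assumptions} so that they descend to critical groups, and then compute the composite $\pi\pi^t$ directly. First I would define the \emph{transfer} (or pushforward) map $\pi\colon \ZZ^{\tilde E} \to \ZZ^E$ sending each edge $\tilde e$ of $\tilde G$ to its image edge $\pi(\tilde e)$ in $G$, and the \emph{pullback} map $\pi^t\colon \ZZ^E \to \ZZ^{\tilde E}$ sending each edge $e$ of $G$ to the sum $\sum_{\tilde e \in \pi^{-1}(e)} \tilde e$ of its $m$ preimages. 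Because we have fixed compatible orientations, $\pi$ and $\pi^t$ commute appropriately with $\del_{\tilde G}$ and $\del_G$; concretely, $\del_G \circ \pi = \pi_V \circ \del_{\tilde G}$ and $\del_{\tilde G}^t \circ \pi_V^t = \pi^t \circ \del_G^t$, where $\pi_V$ is the analogous vertex pushforward. From these intertwining relations one checks that $\pi$ carries $\im \del_{\tilde G}^t$ into $\im \del_G^t$ and $\ker \del_{\tilde G}$ into $\ker \del_G$, and symmetrically for $\pi^t$; this is exactly the content needed to invoke Definition~\ref{edge-presentation} and obtain induced homomorphisms $K(\tilde G) \xrightarrow{\pi} K(G)$ and $K(G) \xrightarrow{\pi^t} K(\tilde G)$.

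Next I would establish surjectivity of $\pi$ on critical groups. The key computational fact is that $\pi \circ \pi^t = m \cdot 1$ already at the level of $\ZZ^E$: applying $\pi^t$ spreads each edge over its $m$ preimages, and applying $\pi$ collapses them back, each contributing once, for a net factor of $m$. This identity passes to the induced maps, giving $\pi\pi^t = m \cdot 1_{K(G)}$. Surjectivity of $\pi\colon K(\tilde G) \to K(G)$ does not follow from this alone (since $m$ need not be invertible), so here I would argue separately, for instance by checking that $\pi$ is already surjective on $\ZZ^E$ and that the presentation of $K(G)$ as a quotient of $\ZZ^E$ makes the induced map surjective onto each generator; alternatively one cites that a section over $\QQ$ exists and combines it with the integral pushforward being onto the bond lattice generators.

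Finally, the splitting claim is a standard consequence of $\pi\pi^t = m \cdot 1_{K(G)}$. Restricting to the $p$-primary component $K(G)_{(p)}$ for a prime $p \nmid m$, multiplication by $m$ is an automorphism of the finite $p$-group $K(G)_{(p)}$, so $\tfrac{1}{m}\pi^t$ provides a genuine section of $\pi$ on that component, exhibiting $K(G)_{(p)}$ as a direct summand of $K(\tilde G)_{(p)}$. I would phrase this via the inverse of $m$ acting on $K(G)_{(p)}$ to avoid denominators. I expect the main obstacle to be the careful bookkeeping in verifying the intertwining relations $\del_G \pi = \pi_V \del_{\tilde G}$ with the correct signs under the pulled-back orientation, together with a clean argument for surjectivity that does not secretly assume $m$ invertible; once those are in hand, the $\pi\pi^t = m$ identity and the splitting are essentially formal.
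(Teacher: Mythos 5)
Your plan is the paper's plan (pushforward and pullback on edge lattices, the identity $\pi\pi^t = m\cdot 1$ checked on $\ZZ^E$, surjectivity of the induced map from surjectivity of $\ZZ^{\tilde{E}}\to\ZZ^E$, and the standard splitting argument for $p\nmid m$), but your functoriality step has a genuine gap. The two intertwining relations you state, $\del_G \circ \pi = \pi_V \circ \del_{\tilde{G}}$ and $\del^t_{\tilde{G}} \circ \pi_V^t = \pi^t \circ \del_G^t$, are transposes of one another, so together they carry exactly one piece of information: the chain-map property, which holds for an \emph{arbitrary} graph map, covering or not. From them you do get $\pi(\ker\del_{\tilde{G}})\subset\ker\del_G$ and $\pi^t(\im\del^t_G)\subset\im\del^t_{\tilde{G}}$, but \emph{not} the other two inclusions you claim, namely $\pi(\im\del^t_{\tilde{G}})\subset\im\del^t_G$ and $\pi^t(\ker\del_G)\subset\ker\del_{\tilde{G}}$; and until both conditions of \eqref{functoriality-assumptions} are verified for at least one of the two maps, neither $\pi$ nor $\pi^t$ is known to descend to critical groups at all. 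The missing inclusions genuinely fail for non-covering graph maps, so no formal manipulation of your two relations can produce them: take $\tilde{G}$ to be a single edge $a=(1,2)$ and $G$ a single loop $e$ at one vertex $u$, with $\pi$ folding the edge onto the loop (a graph map, but not a covering). The chain-map square commutes, since both composites are zero, yet $\pi(\del^t_{\tilde{G}}(1)) = \pi(a) = e \neq 0$ while $\im \del^t_G = 0$.

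What is missing is a second, covering-specific commutative square, $\pi \circ \del^t_{\tilde{G}} = \del^t_G \circ \pi_V$, with the \emph{pushforward} (not the pullback) on both vertical arrows. This is precisely where the hypothesis that $\pi$ is a covering enters: because $\pi$ is a local homeomorphism, the edge-ends incident to a vertex $\tilde{v}$ lying over $v$ map bijectively, with matching pulled-back orientations, onto the edge-ends incident to $v$, so pushing forward the coboundary of $\tilde{v}$ gives exactly the coboundary of $v$. This square yields $\pi(\im\del^t_{\tilde{G}}) \subset \im\del^t_G$, so $\pi$ satisfies both conditions of \eqref{functoriality-assumptions}; since those conditions are closed under transposition (Section~\ref{functoriality-subsection}), $\pi^t$ satisfies them as well. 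So the real obstacle is not sign bookkeeping in the chain-map relation, as you anticipate at the end, but the fact that you need a different relation entirely. With that repaired, the rest of your argument --- surjectivity because $\pi$ is surjective on $\ZZ^E$, the identity $\pi\pi^t = m\cdot 1_{K(G)}$ from counting the $m$ preimages of each edge, and the $p$-primary splitting for $p\nmid m$ --- is correct and matches the paper.
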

\begin{proof}
We first need to check that 
$\pi$ satisfies the two conditions \eqref{functoriality-assumptions}.

For the first condition, note that for any graph map 
$\tilde{G} \overset{\pi}{\rightarrow} G$, the associated set maps 
$\tilde{E} \overset{\pi}{\rightarrow} E$ and
$\tilde{V} \overset{\pi}{\rightarrow} V$ induce a chain map, that is,
one has a commutative square
$$
\begin{CD}
\ZZ^{\tilde{E}} @>\del_{\tilde{G}}>> \ZZ^{\tilde{V}}\\
@V{\pi}VV @VV{\pi}V \\
\ZZ^{E} @>\del_G>> \ZZ^V.
\end{CD}
$$
Consequently, the left vertical map $\pi$ in this square sends
sends (oriented) cycles of $\tilde{G}$ to cycles of $G$, that is,
$\pi(\ker \del_{\tilde{G}}) \subset \ker \del_G$.


For the second condition, note that when $\pi$ is not just a graph map but a graph covering, our conventions for inducing orientations of edges in
$\tilde{E}$ from orientations in $E$ lead to a similar commutative square
$$
\begin{CD}
\ZZ^{\tilde{E}} @<\del^t_{\tilde{G}}<< \ZZ^{\tilde{V}}\\
@V{\pi}VV @VV{\pi}V \\
\ZZ^{E} @<\del^t_G<< \ZZ^V.
\end{CD}
$$
Consequently the left vertical map $\pi$ similarly 
satisfies $\pi(\im \del_{\tilde{G}}^t) \subset \im \del_G^t$, as desired. 

Thus $\pi$ induces a map $K(\tilde{G}) \rightarrow K(G)$.  It is surjective
because $\ZZ^{\tilde{E}} \overset{\pi}{\rightarrow} \ZZ^E$ is already surjective.

The assertion $\pi \pi^t = m \cdot 1_{K(G)}$ for the induced maps on critical groups
follows because the same holds on the level of $\ZZ^E$:
one has $\pi \pi^t = m \cdot 1_{\ZZ^E}$ because every edge $e$ of $G$ 
has exactly $m$ preimages in $\pi^{-1}(e) \subset \tilde{E}$.
\end{proof}

\begin{remark}
Both Berman and Treumann considered a situation somewhat more 
general than a covering that leads to a surjection of critical groups.
Berman \cite[p.9]{Berman} 
defined what it means for a graph $\tilde{G}$ to be {\it divisible by} $G$, 
leading to a graph map 
$\tilde{G} \rightarrow G$ which Treumann \cite[Definition 16]{Treumann}
called a {\it Berman bundle}.  
Most of our results can be made to work, with extra technicality, at the level of Berman bundles; see the
second author's REU report \cite{Tseng}.  We have not yet found sufficiently
interesting applications requiring this extra level of generality, and so we suppress this discussion here.  

Similarly, Baker and Norine consider
{\it harmonic maps} which are more general than 
coverings \cite[\S2, Example 3.4]{BakerNorine2}, showing that
the assertions of Proposition~\ref{covering-splitting-proposition}
hold (with modified statements) in that setting; see their 
Lemmas 4.1 and Lemma 4.12, and their Theorem 4.13.
\end{remark}

\section{Regular coverings and voltage graphs}
\label{regular-coverings-section}

We recall here the notion of regular graph coverings from Gross and Tucker \cite[\S 1]{Gross}.

\begin{definition}
\label{regular-cover-definition}
For a multigraph $G$, a graph map $G \overset{h}{\rightarrow} G$ is called 
a {\it graph endomorphism}.  If it has an inverse $G \overset{h^{-1}}{\rightarrow} G$
which is also a graph endomorphism, then $h$ is called a {\it graph automorphism}.

Say that a group $H$ {\it acts on the right on} $G$ if every $h$ in $H$ corresponds to a graph
automorphism of $G$, 
in such a way that $h_1 (h_2(x)) = (h_2 h_1)(x)$ for all $h_1, h_2$ in $H$ and
all edges $e$ of $G$.

Say a graph covering $\tilde{G} \overset{\pi}{\rightarrow} G$ is
{\it regular} (or {\it normal} or {\it Galois}) if there exists a group $H$ acting on the right on $\tilde{G}$
with the property that $H$ acts {\it simply transitively} on all fibers $\pi^{-1}(v)$ and $\pi^{-1}(e)$ for every vertex $v$ and edge $e$ of $G$.  
In this situation, $H$ is called the {\it transformation group}
of the regular covering $\tilde{G} \overset{\pi}{\rightarrow} G$.
\end{definition}

\begin{remark}
An alternative way to phrase a regular covering is to say that there is a group $H$ acting via cellular automorphisms
on the cell complex $\tilde{G}$, with the action being {\it free} on the associated topological space.  Then
$\tilde{G} \overset{\pi}{\rightarrow} \tilde{G}/H:=G$ is the quotient mapping;  see Gross and Tucker \cite[Thm. 4]{Gross}.
\end{remark}

We next review the encoding from \cite[\S 4]{Gross} of 
a regular graph covering $\tilde{G} \overset{\pi}{\rightarrow} G$ 
with transformation group $H$ and base graph $G=(V,E)$, via an  
{\it $H$-voltage assignment} or {\it $H$-voltage graph}
$G_\beta$, which is nothing more than a set map 
$
\beta: E \rightarrow H.
$

\vskip.1in
\noindent
{\sf From a regular covering to a voltage assignment.}

Given a regular graph covering $\tilde{G} \overset{\pi}{\rightarrow} G$, 
arbitrarily choose for each vertex $v$ in $V$
one vertex $v_\one$ in $\pi^{-1}(v)$ to be labelled by the identity element $\one$ of
$H$.  Since $H$ acts simply transitively on $\pi^{-1}(v)$, the remaining elements in the fiber
can be labelled uniquely as $v_h:=h(v_\one)$.  Since $H$ acts on the right, this forces that 
\begin{equation}
\label{right-action-in-vertex-fiber}
h_1(v_{h_2}) = h_1(h_2(v_{\one})) = (h_2 h_1)(v_\one) = v_{h_2 h_1}.
\end{equation}
To get the voltage assignment $\beta(e)$ for an edge $e$ in $E$, first assume that the
orientation of $e=(u,v)$ has been pulled back to all of the edges in the fiber $\pi^{-1}(e)$.
There will be a unique such edge $e_\one$ having source $u_\one$;  if this edge has target
$v_h$, then decree that $\beta(e)=h$.  Since $H$ acts by automorphisms, one can use this to label
the remaining edges in the same fiber:  for any $h'$ in $H$ 
the edge $e_{h'}:=h'(e_\one)$ must have source $h'(u_\one)=u_{h'}$ and target $h'(v_h)=v_{h h'}$.
In other words, the edges of $\tilde{E}$ in $\pi^{-1}(e)$ are all of the form 
$e_{h'}=(u_{h'},v_{\beta(e)h'})$ as $h'$ ranges through $H$, and the $H$-action on them follows this rule:
\begin{equation}
\label{right-action-in-edge-fiber}
h_1(e_{h_2}) = e_{h_2 h_1}.
\end{equation}

\vskip.1in
\noindent
{\sf From a voltage assignment to a regular covering.}

Given a multigraph $G=(V,E)$, with an arbitrary orientation on $E$,
and an arbitrary $H$-voltage assignment $G_\beta$ as a map $\beta: E\rightarrow H$, 
one creates $\tilde{G}=(\tilde{E},\tilde{V})$ 
as follows:
$$
\begin{aligned}
\tilde{V}&:= \{v_h\}_{v \in V, h \in H}\\
\tilde{E}&:= \{e_h=(u_h, v_{\beta(e)h})\}_{e=(u,v) \in E,  h \in H}
\end{aligned}
$$
The regular graph covering $\tilde{G} \overset{\pi}{\rightarrow} G$ simply forgets the subscripts:
$e_h \mapsto e$ and $v_h \mapsto v$.

\begin{example}
\label{octahedron-example}
Let $\tilde{G}$ be the graph of the octahedron, which carries a free action of the cyclic group $H=\{1,h,h^2\} \cong \ZZ_3$, in which $h$ rotates $120^\circ$ around an axis passing through the centers of two opposite triangular faces.  One finds that the associated regular covering $\tilde{G} \overset{\pi}{\rightarrow} G$ is as shown below, described by a voltage graph $G_\beta$ 
on an underlying multigraph 
$G=(V,E)$ with two vertices $V=\{u,v\}$ and four edges $E=\{a,b,c,d\}$.  
Here edges $a,b$ are both directed from $u$ to $v$ 
while $c,d$ are loops on vertices $u,v$, respectively, with voltage assignments
$\beta(a)=\one, \beta(b)=\beta(c)=\beta(d)=h$.

\begin{center}
\begin{tikzpicture}[->,scale=0.8, auto,swap]
    \foreach \pos/\name in {{(3,1)/v_{h^2}}, {(3,-1)/u_{h}},{(-5,1)/v_{1}},
                           {(-5,-1)/u_{1}},
                           {(-1,3)/u_{h^2}},
                           {(-1,-3)/v_{h}},
                           {(1,-7)/v},
                           {(-3,-7)/u}}
        \node[vertex] (\name) at \pos {$\name$};


    \foreach \source/ \dest/\arclabel in {u_{1}/v_{1}/a_1}
        \path (\source) edge node[left] {$\arclabel$} (\dest);
    \foreach \source/ \dest/\arclabel in {u_{h}/v_{h}/a_h}
        \path (\source) edge node[left] {$\arclabel$} (\dest);
    \foreach \source/ \dest/\arclabel in {u_{h^2}/v_{h^2}/a_{h^2}}
        \path (\source) edge node[right] {$\arclabel$} (\dest);

    \foreach \source/ \dest/\arclabel in {u_{1}/v_{h}/b_1}
        \path (\source) edge node[auto] {$\arclabel$} (\dest);
    \foreach \source/ \dest/\arclabel in {u_{h}/v_{h^2}/b_h}
        \path (\source) edge node[auto] {$\arclabel$} (\dest);
    \foreach \source/ \dest/\arclabel in {u_{h^2}/v_{1}/b_{h^2}}
        \path (\source) edge node[auto] {$\arclabel$} (\dest);

    \foreach \source/ \dest/\arclabel in {u_{1}/u_{h}/c_1}
        \path (\source) edge node[auto] {$\arclabel$} (\dest);
    \foreach \source/ \dest/\arclabel in {u_{h}/u_{h^2}/c_h}
        \path (\source) edge node[near end, right] {$\arclabel$} (\dest);
    \foreach \source/ \dest/\arclabel in {u_{h^2}/u_{1}/c_{h^2}}
        \path (\source) edge node[near start, left] {$\arclabel$} (\dest);

    \foreach \source/ \dest/\arclabel in {v_{1}/v_{h}/d_1}
        \path (\source) edge node[auto] {$\arclabel$} (\dest);
    \foreach \source/ \dest/\arclabel in {v_{h}/v_{h^2}/d_h}
        \path (\source) edge node[near start, left] {$\arclabel$} (\dest);
    \foreach \source/ \dest/\arclabel in {v_{h^2}/v_{1}/d_{h^2}}
        \path (\source) edge node[auto] {$\arclabel$} (\dest);

    \foreach \source/ \dest/\arclabel/\voltage in {u/v/a/1}
        \path (\source) edge[bend left=40] node[auto] {$\arclabel$,$\voltage$} (\dest);
    \foreach \source/ \dest/\arclabel/\voltage in {u/v/b/h}
        \path (\source) edge[bend right=40] node[auto] {$\arclabel$,$\voltage$} (\dest);
    \foreach \source in {v}    
	\path[every node/.style={font=\sffamily\small}]
        (\source)   edge[in=30, out=330, loop] node[right, near end] {$d$, $h$}  (\source);
    \foreach \source in {u}    
	\path[every node/.style={font=\sffamily\small}]
        (\source)   edge[in=150, out=210, loop] node[left, near start] {$c$, $h$}  (\source);

\draw[arrows=->,line width=1pt](-1,-4)--(-1,-6);
\end{tikzpicture}
\end{center}

\end{example}

\begin{example}
\label{double-covers-are-regular-example}
Call a graph covering $\tilde{G} \overset{\pi}{\rightarrow} G$ a {\it double cover} if
it is $2$-sheeted.  
We claim that graph double covers are {\it always} regular, with transformation group $H=\ZZ_2 =\{+,-\}$:  picking an arbitrary labelling of the two vertices in each fiber
$\pi^{-1}(v) =\{v_+, v_-\}$ and $\pi^{-1}(e) = \{e_+, e_-\}$, one finds that the involution $h$ which simultaneously swaps all $v_+ \leftrightarrow v_-$ and $e_+ \leftrightarrow e_-$ is
a graph automorphism generating the transformation group 
$H=\{\one, h\} \cong \ZZ_2$ that satisfies the Definition~\ref{regular-cover-definition}
for a regular covering.
In this setting, the voltage assignment $G_\beta$ as a function $E \rightarrow H=\ZZ_2=\{+,-\}$
can be thought as a signed graph $G_\pm$ as in the Introduction.
\end{example}

We can now use this $H$-voltage assignment encoding of regular coverings
to reformulate the critical group $K(\tilde{G})$ using the group algebra of $H$.
This reformulation will be useful in the proof of
Theorem~\ref{regular-covering-exact-sequence} below.

\begin{definition}
Recall that the {\it group algebra $\ZZ H$} is the 
free $\ZZ$-module on $\ZZ$-basis elements $\{T_h\}_{h \in H}$
with multiplication defined $\ZZ$-linearly via $T_{h_1} T_{h_2} := T_{h_1 h_2}$.
\end{definition}

For any $H$-voltage assignment $G_\beta$ and associated regular covering
$\tilde{G} \overset{\pi}{\rightarrow} G$, the action of $H$ on the right of
$\tilde{G}=(\tilde{V},\tilde{E})$ endows $\ZZ^{\tilde{E}}$ and $\ZZ^{\tilde{V}}$
with the structures of right-$\ZZ H$-modules:
$$
\begin{aligned}
e_{h_1} T_{h_2}&:= e_{h_1 h_2} \\
v_{h_1} T_{h_2}&:= v_{h_1 h_2} 
\end{aligned}
$$

We will also work with {\it free} right-$\ZZ H$-modules 
$(\ZZ H)^E$ and $(\ZZ H)^V$ having $\ZZ H$-basis elements indexed by
$e$ in $E$ and $v$ in $V$.  This means, for example, that
$(\ZZ H)^E$ is a free $\ZZ$-module
with $\ZZ$-basis elements $\{e T_h\}_{e \in E, h \in H}$, and
and its right-$\ZZ H$-module structure can be defined $\ZZ$-linearly by
$$
(e T_{h_1}) T_{h_2}:= e T_{h_1 h_2}
$$

\begin{proposition}
\label{group-algebra-covering-identifications}
For any $H$-voltage assignment $G_\beta$ and associated regular covering
$\tilde{G} \overset{\pi}{\rightarrow} G$, the following $\ZZ$-module maps give
isomorphisms of right-$\ZZ H$-modules:
$$
\begin{array}{rcl}
\ZZ^{\tilde{E}} &\longrightarrow &(\ZZ H)^E \\
e_h & \longmapsto & e T_h \\
& & \\
\ZZ^{\tilde{V}} &\longrightarrow &(\ZZ H)^V \\
v_h & \longmapsto & v T_h .
\end{array}
$$
\end{proposition}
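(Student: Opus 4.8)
The plan is to observe that both the source and target of each proposed map are, by their very constructions, free $\ZZ$-modules whose distinguished $\ZZ$-bases are indexed by the same set, so the asserted assignment is visibly a bijection of bases; the only genuine content is checking that this $\ZZ$-linear bijection intertwines the two right-$\ZZ H$-actions. I would therefore begin with the edge map. The module $\ZZ^{\tilde{E}}$ has $\ZZ$-basis $\{e_h\}_{e \in E,\, h \in H}$, since $\tilde{E} = \{e_h\}$ by the construction of $\tilde{G}$ from $G_\beta$, while $(\ZZ H)^E$ has $\ZZ$-basis $\{e T_h\}_{e \in E,\, h \in H}$ by definition of the free right-$\ZZ H$-module. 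Hence the rule $e_h \mapsto e T_h$ carries one $\ZZ$-basis bijectively onto the other, and so extends uniquely to a $\ZZ$-module isomorphism.

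Next I would verify $H$-equivariance on basis elements, which is the one place demanding care with the right-action convention. On the source, the right-$\ZZ H$-structure is $e_{h_1} T_{h_2} = e_{h_1 h_2}$ (equivalently, this is the graph-automorphism action $h_2(e_{h_1}) = e_{h_1 h_2}$ recorded in \eqref{right-action-in-edge-fiber}), while on the target it is $(e T_{h_1}) T_{h_2} = e T_{h_1 h_2}$. Applying the map to $e_{h_1} T_{h_2} = e_{h_1 h_2}$ yields $e T_{h_1 h_2}$, which is precisely $(e T_{h_1}) T_{h_2}$, i.e.\ the image of $e_{h_1}$ acted on by $T_{h_2}$. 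Since both module structures are $\ZZ$-bilinear, agreement on $\ZZ$-basis elements suffices, and the edge map is an isomorphism of right-$\ZZ H$-modules.

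The vertex map is handled identically, using $v_{h_1} T_{h_2} = v_{h_1 h_2}$ from \eqref{right-action-in-vertex-fiber} in place of the edge relation. I expect essentially no obstacle here beyond bookkeeping: the single subtle point is that the subscript product $h_1 h_2$ appears in the same order on both sides of the equivariance identity, which is exactly what the right-action conventions \eqref{right-action-in-vertex-fiber} and \eqref{right-action-in-edge-fiber}—set up when passing from a regular covering to its voltage assignment—are designed to guarantee. Getting that composition order to match (rather than reversing to $h_2 h_1$) is the whole difficulty, and it is resolved the moment those two display equations are invoked.
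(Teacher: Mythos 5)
Your proof is correct and follows essentially the same route as the paper's: the paper's (terser) proof likewise reduces everything to the fact that the fiber labellings were chosen so that the right-$H$-actions obey \eqref{right-action-in-vertex-fiber} and \eqref{right-action-in-edge-fiber}, which makes the evident basis bijection $e_h \mapsto eT_h$, $v_h \mapsto vT_h$ equivariant. Your write-up simply makes explicit the basis-matching and the equivariance check that the paper leaves implicit.
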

\begin{proof}
This follows from the fact that we have labelled 
the elements within the fibers $\pi^{-1}(v)=\{v_h\}_{h \in H}$ 
and $\pi^{-1}(e)=\{e_h\}_{h \in H}$ in such a way that
the right-$H$-actions satisfy the rules
\eqref{right-action-in-vertex-fiber} and \eqref{right-action-in-edge-fiber}.
\end{proof}

Here is the point of working with right-actions and right-$\ZZ H$-modules:
\begin{itemize}
\item
one can regard elements of $(\ZZ H)^E$ and $(\ZZ H)^V$ as column vectors
having entries in $\ZZ H$, and then
\item  
specify {\it right}-$\ZZ H$-module maps between these
free right-$\ZZ H$-modules via multiplication
on the {\it left} by matrices with entries in $\ZZ H$.
\end{itemize}
For example, define $\del_{G_\beta}$ to be the matrix in $(\ZZ H)^{V \times E}$ 
representing the right-$\ZZ H$-module map 
\begin{equation}
\label{group-algebra-del-definition}
\begin{array}{rcll}
(\ZZ H)^E &\overset{\del_{G_\beta}}{\longrightarrow}& (\ZZ H)^V &\\
e &\longmapsto &+u-v T_{\beta(e)} &\\
e T_h &\longmapsto &(+u-v T_{\beta(e)}) T_h &= +u T_h - v T_{\beta(e) h}\\
\end{array}
\end{equation}
for each edge $e$ of $G$ which is oriented $e=(u,v)$.
We will also need a map in the other direction
$$
(\ZZ H)^V \overset{\del^*_{G_\beta}}{\longrightarrow} (\ZZ H)^E
$$
which is represented by the matrix $\del^*_{G_\beta}$ in $(\ZZ H)^{E \times V}$
obtained from $\del_{G_\beta}$ by first transposing the matrix, and then applying to
each $\ZZ H$-entry the anti-automorphism $\ZZ H \overset{*}{\rightarrow} \ZZ H$ 
sending $T_h \mapsto T_{h^{-1}}$.

\begin{proposition}
\label{group-algebra-isomorphisms}
The isomorphisms in Proposition~\ref{group-algebra-covering-identifications}
make the following diagrams of right-$\ZZ H$-module morphisms commute:
$$
\begin{CD}
\ZZ^{\tilde{E}} @>>> (\ZZ H)^E \\
@V{\del_{\tilde{G}}}VV  @VV{\del_{G_\beta}}V \\
\ZZ^{\tilde{V}} @>>> (\ZZ H)^V \\
\end{CD}
\qquad \qquad
\begin{CD}
\ZZ^{\tilde{V}} @>>> (\ZZ H)^V \\
@V{\del^t_{\tilde{G}}}VV   @VV{\del^*_{G_\beta}}V \\
\ZZ^{\tilde{E}} @>>> (\ZZ H)^E \\
\end{CD}
$$
\end{proposition}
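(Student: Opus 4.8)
The plan is to verify each square by evaluating both composite maps on the $\ZZ$-basis elements $e_h$ (for the left square) and $v_h$ (for the right square); since every map involved is $\ZZ$-linear, this suffices, and because all four maps are in fact right-$\ZZ H$-module morphisms one could even restrict to the generators $e_\one, v_\one$. As the computation costs nothing extra on a general basis element, I would simply carry the group-element subscripts through. The only data I need are the orientation convention $e_h=(u_h, v_{\beta(e) h})$ for the lifted edges, coming from the construction of $\tilde{G}$ out of $G_\beta$, and the entrywise description of $\del_{G_\beta}$: its $(w,e)$-entry is $T_\one$ when $w$ is the source of $e$ and $-T_{\beta(e)}$ when $w$ is the target.

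For the left square I would start from $e_h$. The lifted edge is $e_h=(u_h, v_{\beta(e) h})$, so $\del_{\tilde{G}}(e_h)=u_h - v_{\beta(e) h}$, and the vertex isomorphism sends this to $u T_h - v T_{\beta(e) h}$. Along the other route, the edge isomorphism sends $e_h \mapsto e T_h$, and the defining formula \eqref{group-algebra-del-definition} gives $\del_{G_\beta}(e T_h)=u T_h - v T_{\beta(e) h}$. The two outputs coincide immediately; indeed $\del_{G_\beta}$ was set up precisely to make this square commute, so this half is essentially a restatement of the definition. (Self-loops $e=(u,u)$ require no special treatment, since the source and target contributions $+u T_h$ and $-u T_{\beta(e)h}$ are simply recorded in the same coordinate.)

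The right square is where the genuine content sits, because it involves the adjoints $\del^t_{\tilde{G}}$ and $\del^*_{G_\beta}$. I would first compute $\del^t_{\tilde{G}}(u_h)$ directly by enumerating the edges of $\tilde{G}$ incident to $u_h$. The edges having $u_h$ as \emph{source} are exactly the lifts $e_h$ of edges $e$ with source $u$ in $G$. The edges having $u_h$ as \emph{target} are the lifts $f_{h'}$ of edges $f$ with target $u$ in $G$, where $h'$ must solve $\beta(f)h'=h$, forcing $h'=\beta(f)^{-1}h$. Hence $\del^t_{\tilde{G}}(u_h)=\sum_{e:\,\mathrm{source}(e)=u} e_h - \sum_{f:\,\mathrm{target}(f)=u} f_{\beta(f)^{-1}h}$, and the edge isomorphism carries this to $\sum_{e} e T_h - \sum_{f} f T_{\beta(f)^{-1}h}$.

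Finally I would compute the other composite: the vertex isomorphism gives $u T_h$, and I read $\del^*_{G_\beta}(u T_h)$ off the matrix. Transposing $\del_{G_\beta}$ and applying the anti-automorphism $*$ turns the target-entry $-T_{\beta(e)}$ into $-T_{\beta(e)^{-1}}$ while fixing the source-entry $T_\one$; left-multiplying the column $u T_h$ then yields $e$-coordinate $T_\one T_h=T_h$ when $u$ is the source of $e$ and $-T_{\beta(e)^{-1}}T_h=-T_{\beta(e)^{-1}h}$ when $u$ is the target, producing exactly $\sum_{e:\,\mathrm{source}(e)=u} e T_h - \sum_{f:\,\mathrm{target}(f)=u} f T_{\beta(f)^{-1}h}$. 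The main obstacle is precisely to see that these two outputs agree, i.e.\ that the subscript shift $h\mapsto\beta(f)^{-1}h$ arising \emph{geometrically} from deciding which fiber element $f_{h'}$ lands on $u_h$ is the same as the shift produced \emph{algebraically} by the star anti-automorphism $T_{\beta(f)}\mapsto T_{\beta(f)^{-1}}$ built into $\del^*_{G_\beta}$. Once these two appearances of $\beta(f)^{-1}$ are matched, both squares commute and the proof is complete.
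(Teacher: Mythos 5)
Your proof is correct, but your treatment of the right square differs genuinely from the paper's. For the left square both you and the paper do the same thing: evaluate on the basis element $e_h=(u_h,v_{\beta(e)h})$ and observe that commutativity is essentially the definition \eqref{group-algebra-del-definition} of $\del_{G_\beta}$. For the right square, however, the paper avoids any enumeration of edges: it notes that the horizontal isomorphisms carry the standard inner products to inner products on $(\ZZ H)^E,(\ZZ H)^V$ making $\{eT_h\}$ and $\{vT_h\}$ orthonormal, and then proves a general lemma that a right-$\ZZ H$-module map represented by a matrix $M=(m_{v,e})$ has adjoint represented by $M^*$ (transpose followed by the entrywise anti-automorphism $T_h\mapsto T_{h^{-1}}$), checked via the identity $\langle M(eT_{h_1}), vT_{h_2}\rangle = \mu_{v,e,h_2h_1^{-1}} = \langle eT_{h_1}, M^*(vT_{h_2})\rangle$. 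Commutativity of the right square is then a formal consequence of commutativity of the left one, since adjoints of corresponding maps under isometries must correspond. Your route instead computes $\del^t_{\tilde{G}}(u_h)$ directly by sorting the edges of $\tilde{G}$ incident to $u_h$ into those with source $u_h$ (lifts $e_h$) and those with target $u_h$ (lifts $f_{\beta(f)^{-1}h}$), and matches this against the matrix entries of $\del^*_{G_\beta}$; the computation is right, including the key reconciliation of the geometric subscript shift $h\mapsto\beta(f)^{-1}h$ with the algebraic inversion built into $*$. What the paper's argument buys is a reusable structural fact (star-conjugate-transpose $=$ adjoint over $\ZZ H$) that it exploits with no fiber combinatorics; what yours buys is a self-contained, elementary verification that makes visible \emph{why} the anti-automorphism appears, at the cost of being specific to the incidence map rather than arbitrary matrices.
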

\begin{proof}
To see the commutativity of the left diagram, note that
the basis element of $\ZZ^{\tilde{E}}$ corresponding to a 
directed edge $e_h = (u_h,v_{\beta(e)h})$ in $\tilde{E}$,
lying above $\pi(e_h)=e=(u,v)$ in $E$, will map
under $\del_{\tilde{G}}$ to the vector $+u_h - v_{\beta(e)h}$ in $\ZZ^{\tilde{V}}$.
Since the horizontal isomorphisms send $e_h \mapsto e T_h$ and 
$+u_h - v_{\beta(e)h} \mapsto +u T_h - v T_{\beta(e)h}$, commutativity
follows from the last line of \eqref{group-algebra-del-definition}.

The commutativity of the right diagram then follows from a general fact:
the horizontal isomorphisms carry the inner products 
on $\ZZ^{\tilde{V}}, \ZZ^{\tilde{E}}$ to inner products on 
$(\ZZ H)^V, (\ZZ H)^E$ that make
$\{ v T_h \}_{v \in V, h \in H}$ and 
$\{ e T_h \}_{e \in E, h \in H}$
orthonormal bases.  This implies that a right-$\ZZ H$-module map
$(\ZZ H)^E \rightarrow (\ZZ H)^V$ represented by a matrix $M=(m_{v,e})$ in 
$(\ZZ H)^{V \times E}$, has its adjoint map represented by
the matrix $M^*$ in the above notation.  To check this,
write $m_{v,e}=\sum_{ h \in H} \mu_{v,e,h} T_h$ for some
$\mu_{v,e,h}$ in $\ZZ$, so the $(e,v)$-entry of $M^*=(m^*_{e,v})$ is
$\sum_{h \in H} \mu_{v,e,h^{-1}}T_h$, and then
$$
\begin{aligned}
M(eT_{h_1}) &= \sum_{v \in V} \sum_{ h \in H} \mu_{v,e,h} v T_{h h_1}, \\
M^*(vT_{h_2}) &= \sum_{e \in E} \sum_{ h \in H} \mu_{v,e,h^{-1}} e T_{h h_2}.
\end{aligned}
$$
Therefore
$$
\langle M(eT_{h_1}), vT_{h_2} \rangle_{(\ZZ H)^V}
   =\mu_{v,e,h_2 h_1^{-1}} =
\langle eT_{h_1}, M^*(vT_{h_2}) \rangle_{(\ZZ H)^V}. \qedhere
$$
\end{proof}

The following corollary is immediate.

\begin{corollary}
\label{regular-cover-critical-group-reformulated}
For any $H$-voltage assignment $G_\beta$ and associated regular covering
$\tilde{G} \overset{\pi}{\rightarrow} G$, one has
$$
\begin{aligned}
K(\tilde{G}) &\cong (\ZZ H)^E / \left( \im \del_{G_\beta}^* + \ker \del_{G_\beta} \right) \\
              &\cong \im \del_{G_\beta} / \im \del_{G_\beta} \del_{G_\beta}^*
\end{aligned}
$$
where here 
\begin{itemize}
\item $\im \del_{G_\beta}^* $ and $\ker \del_{G_\beta}$ are $\ZZ$-sublattices of
$(\ZZ H)^E$, 
\item 
$\im \del_{G_\beta}$ and $\im \del_{G_\beta} \del_{G_\beta}^*$ are $\ZZ$-sublattices of
$(\ZZ H)^V$.
\end{itemize}
\end{corollary}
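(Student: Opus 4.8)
The plan is to deduce both isomorphisms directly from the edge presentation of $K(\tilde{G})$ (Definition~\ref{edge-presentation}), transported across the right-$\ZZ H$-module isomorphisms of Proposition~\ref{group-algebra-covering-identifications}, and then invoke Proposition~\ref{concordance-prop}. Write $\phi_E \colon \ZZ^{\tilde{E}} \to (\ZZ H)^E$ and $\phi_V \colon \ZZ^{\tilde{V}} \to (\ZZ H)^V$ for the two isomorphisms of Proposition~\ref{group-algebra-covering-identifications}; these are in particular $\ZZ$-module isomorphisms, which is all I will use.

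First I would use the two commuting squares of Proposition~\ref{group-algebra-isomorphisms} to transport the cycle and cut lattices. Commutativity of the left square reads $\phi_V \circ \del_{\tilde{G}} = \del_{G_\beta} \circ \phi_E$, and since $\phi_V$ is injective this gives $x \in \ker \del_{\tilde{G}}$ if and only if $\phi_E(x) \in \ker \del_{G_\beta}$, whence $\phi_E(\ker \del_{\tilde{G}}) = \ker \del_{G_\beta}$. Commutativity of the right square reads $\phi_E \circ \del^t_{\tilde{G}} = \del^*_{G_\beta} \circ \phi_V$, and since $\phi_V$ is surjective this gives $\phi_E(\im \del^t_{\tilde{G}}) = \im \del^*_{G_\beta}$. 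Because $\phi_E$ is a $\ZZ$-module isomorphism it carries the sum of these two sublattices to the sum of their images, hence descends to an isomorphism of quotients
$$
K(\tilde{G}) = \ZZ^{\tilde{E}}/\left( \im \del^t_{\tilde{G}} + \ker \del_{\tilde{G}} \right) \xrightarrow{\sim} (\ZZ H)^E/\left( \im \del^*_{G_\beta} + \ker \del_{G_\beta} \right),
$$
which is the first claimed isomorphism.

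For the second, I would apply Proposition~\ref{concordance-prop} to the pair of abelian-group homomorphisms $(\ZZ H)^E \underset{\del^*_{G_\beta}}{\overset{\del_{G_\beta}}{\rightleftarrows}} (\ZZ H)^V$; a right-$\ZZ H$-module map is in particular an additive homomorphism, so the hypothesis of that proposition is met with $f = \del_{G_\beta}$ and $g = \del^*_{G_\beta}$. This yields $(\ZZ H)^E/\left( \im \del^*_{G_\beta} + \ker \del_{G_\beta} \right) \cong \im \del_{G_\beta}/\im \del_{G_\beta}\del^*_{G_\beta}$, completing the chain.

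Honestly there is little difficulty here, which is why the statement is flagged as immediate: everything is deduced from results already in hand. The only point needing care is the bookkeeping that the single $\ZZ$-isomorphism $\phi_E$ simultaneously carries $\ker \del_{\tilde{G}}$ and $\im \del^t_{\tilde{G}}$ onto their group-algebra counterparts. This is the formal fact that images and kernels of maps conjugate by isomorphisms correspond under those isomorphisms, and it is precisely what the injectivity of $\phi_V$ (for kernels) and the surjectivity of $\phi_V$ (for images) supply via the two commuting squares.
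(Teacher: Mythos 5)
Your proposal is correct and matches the paper's intended argument: the paper simply declares the corollary ``immediate'' after Proposition~\ref{group-algebra-isomorphisms}, and what it leaves unsaid is exactly what you spell out — transporting $\ker \del_{\tilde{G}}$ and $\im \del^t_{\tilde{G}}$ across the isomorphisms of Proposition~\ref{group-algebra-covering-identifications} via the two commuting squares, then invoking Proposition~\ref{concordance-prop} for the second presentation. No discrepancy to report.
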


\section{The short exact sequence for a regular covering}
\label{voltage-short-exact-sequence-section}

For an $H$-voltage assignment $G_\beta$ and associated regular covering
$\tilde{G} \overset{\pi}{\rightarrow} G$, we can now identify 
the kernel of the surjection $K(\tilde{G}) \overset{\pi}{\twoheadrightarrow} K(G)$
in Proposition~\ref{covering-splitting-proposition}.

\subsection{The reduced group algebra}

\begin{definition}
Inside the group algebra $\ZZ H$, consider the (central) element 
$c:=\sum_{h \in H} T_h$, and the $2$-sided ideal $I=\ZZ c$ consisting of the
$\ZZ$-multiples of $c$.  In other words, $I$ is the $\ZZ$-submodule of $\ZZ H$
where all $\ZZ$-basis elements $T_h$ have the same coefficient.
Define the {\it reduced group algebra} $\redZH$ to be the quotient ring
$$
\redZH : = \ZZ H / I = \ZZ H / \ZZ c.
$$
Note that, just as $\ZZ H$ is a free $\ZZ$-module of rank $m:=|H|$, the ring
$\redZH$ is a free $\ZZ$-module of rank $m-1$.  As $c$ is 
invariant under $T_h \mapsto T_{h^{-1}}$, the ring
$\redZH$ inherits an anti-automorphism $\redZH \overset{*}{\longrightarrow} \redZH$
sending $\overline{T_h} \mapsto \overline{T}_{h^{-1}}$.
\end{definition}

In general we will use $\overline{(\cdot)}$ for the quotient operation 
$\ZZ H \rightarrow \redZH$ which reduces right-$\ZZ H$-modules
and morphisms modulo $I$.  For example, one has right-$\redZH$-module maps 
$$
\begin{array}{rcl}
\redZH^E &\overset{\overline{\del}_{G_\beta}}{\longrightarrow} & \redZH^V \\
\redZH^V & \overset{\overline{\del}_{G_\beta}^*}{\longrightarrow}& \redZH^E
\end{array}
$$
used in the following definition.

\begin{definition}
For $H$-voltage assignment $G_\beta$ with regular covering
$\tilde{G} \overset{\pi}{\rightarrow} G$, define the {\it critical group of $G_\beta$}
$$
\begin{aligned}
K(G_{\beta})&:= \redZH^E/  \left( \im \overline{\del}_{G_\beta}^* + \ker \overline{\del}_{G_\beta} \right)\\
&\cong 
\im \overline{\del}_{G_\beta}/
\im \overline{\del}_{G_\beta} \overline{\del}_{G_\beta}^* 
\end{aligned}
$$
where $\im \overline{\del}_{G_\beta},
\im \overline{\del}_{G_\beta} \overline{\del}_{G_\beta}^*$
are considered as $\redZH$-submodules of $\redZH^V$.
We also name the matrix in $\redZH^{V \times V}$ 
\begin{equation}
\label{voltage-graph-Laplacian}
L(G_\beta) :=\overline{\del}_{G_\beta} \overline{\del}_{G_\beta}^* 
\end{equation}
appearing in the definition of $K(G_\beta)$ the {\it voltage graph Laplacian}, so one can rewrite this as
\begin{equation}
\label{voltage-graph-critical-group-via-Laplacian}
K(G_{\beta}) \cong 
\im \overline{\del}_{G_\beta}/
\im L(G_\beta).
\end{equation}
\end{definition}

We can now prove our first main result, which was stated in the Introduction, and which we recall here.

\vskip.1in
\noindent
{\bf Theorem~\ref{regular-covering-exact-sequence}.}
{\it 
Any $H$-voltage assignment $G_\beta$ with regular covering
$\tilde{G} \overset{\pi}{\rightarrow} G$  has a short exact sequence
$$
0 \rightarrow K(G_{\beta}) \rightarrow K(\tilde{G}) \rightarrow K(G) \rightarrow 0
$$
which splits when restricted to $p$-primary components for primes $p$ not dividing $|H|$.  
In particular, $|K(\tilde{G})|=|K(G_\beta)| \cdot |K(G)|$.
}
\begin{proof}
It suffices to show that the surjection 
$K(\tilde{G}) \overset{\pi}{\twoheadrightarrow} K(G)$
from Proposition~\ref{covering-splitting-proposition} has
kernel isomorphic to $K(G_{\beta})$.  Instead we will show the equivalent
statement that $K(G_{\beta})$ is isomorphic to the cokernel of the Pontryagin dual injection 
$K(G) \overset{\pi^t}{\hookrightarrow} K(\tilde{G})$
(using Proposition~\ref{Pontryagin-duality-prop}).
This is equivalent since $\coker(\pi^t)$ is Pontryagin dual to $\ker \pi$,
and hence they are (abstractly) isomorphic abelian groups.

Recall that 
$$
\begin{aligned}
K(G) &= \ZZ^E / \left( \im \del_G^t + \ker \del_G \right), \\
K(\tilde{G}) &\cong (\ZZ H)^E / \left( \im \del_{G_\beta}^* + \ker \del_{G_\beta} \right)
\end{aligned}
$$
from Definition~\ref{edge-presentation} and Corollary~\ref{regular-cover-critical-group-reformulated}.
Consequently 
$$
\coker(\pi^t) \cong 
  (\ZZ H)^E / \left( \im \del_{G_\beta}^* + \ker \del_{G_\beta} + \pi^t(\ZZ^E) \right)
$$
Recall an edge $e$ of $G$ has fiber $\pi^{-1}(e) = \{e_h\}_{h \in H}$,
hence its basis element of $\ZZ^E$ maps under $\pi^t$ to
the sum $\sum_{h \in H} e_h$ in $\ZZ^{\tilde{E}}$.  This sum
corresponds under the isomorphism $\ZZ^{\tilde{E}} \rightarrow (\ZZ H)^E$ 
of Proposition~\ref{group-algebra-isomorphisms} to 
$$
\sum_{h \in H} e T_h =e\left( \sum_{h \in H} T_h \right)= e \cdot c.
$$ 
Hence $\pi^t(\ZZ^E) = (\ZZ c)^E=I^E$ inside $(\ZZ H)^E$, so that 
$$
(\ZZ H)^E /\pi^t(\ZZ^E) \cong (\ZZ H)^E /I^E \cong (\ZZ H /I)^E = \redZH^E.
$$
Using Noether's third isomorphism theorem,
one can conclude that
$$
\coker(\pi^t) \cong
\redZH^E / \left( \im \overline{\del}_{G_\beta}^* + \ker \overline{\del}_{G_\beta} \right)
\cong K(G_\beta),
$$
after verifying\footnote{The authors thank Julie Yuan for pointing out  (Dec. 2019) the omission of these verifications.} that the surjection $(\ZZ H)^E \rightarrow \redZH^E$
has these two properties: 
\begin{itemize}
\item[(i)] it maps $\im \del_{G_\beta}^*$ onto $ \im \overline{\del}_{G_\beta}^*$, and 
\item[(ii)] it maps $\ker \del_{G_\beta}$ onto $ \ker \overline{\del}_{G_\beta}$.  
\end{itemize}
Property (i) follows from the commutativity of this diagram with surjective horizontal maps:
\begin{equation*}
\begin{CD}
   (\ZZ H)^E  @>{}>>  \redZH^E   \\
    @V{\del_{G_\beta}^*}VV        @VV{ \overline{\del}_{G_\beta}^*}V     \\
   (\ZZ H)^V   @>{}>>    \redZH^V  
\end{CD}
\end{equation*}
Property (ii) will follow via a chase through this commutative diagram with exact rows:
\begin{equation}
\label{chased-diagram}
\begin{CD}
   0  @>{}>> \ZZ^E  @>{\pi^t}>>   (\ZZ H)^E  @>{}>>\redZH^E @>{}>>  0 \\
   @.   @V{\del_G}VV   @V{\del_{G_\beta}}VV  @VV{ \overline{\del}_{G_\beta}}V  @.    \\
   0  @>{}>> \ZZ^V @>{\pi^t}>>  (\ZZ H)^V  @>{}>>  \redZH^V  @>{}>>  0 
\end{CD}
\end{equation}
Commutativity of the right square in \eqref{chased-diagram}
shows that   $(\ZZ H)^E \rightarrow \redZH^E$ sends $\ker \del_{G_\beta}$ {\it into} 
$ \ker \overline{\del}_{G_\beta}$.  Conversely, given $\overline{\alpha}$ in $ \ker \overline{\del}_{G_\beta} \subset \redZH^E$,
we must exhibit it as the image under  $(\ZZ H)^E \rightarrow \redZH^E$ of an element of  $\ker \del_{G_\beta}$.  Start by
picking any lift $\alpha$ of $\overline{\alpha}$ in $(\ZZ H)^E$, and let $\gamma:=\del_{G_\beta}(\alpha)$ in  $(\ZZ H)^V$.  Commutativity of the 
right square  in \eqref{chased-diagram} shows that  $\gamma$ maps to $0$ under $(\ZZ H)^V \rightarrow \redZH^V$, and hence 
$\gamma=\pi^t(\gamma')$ for some $\gamma'$ in $\ZZ^V$.  We claim that $\gamma'$ lies in the image of 
$\ZZ^E \overset{\del_G}{\longrightarrow} \ZZ^V$, or equivalently,
the entries of $\gamma'$ sum to zero on the vertices
 within each connected component of $G$.  This is because $\gamma$ is in the image of $\del_{G_\beta}$, 
 so the entries of $\gamma$ sum to zero on the vertices within each connected component of $\tilde{G}$,
and hence the same must hold for $\pi (\gamma)= \pi \pi^t(\gamma') = |H| \cdot \gamma'$, and
therefore also for $\gamma'$.  Thus one can choose $\alpha'$ in $\ZZ^E$ with $\del_G(\alpha')=\gamma'$,
and then check that the element
$\alpha - \pi^t(\alpha')$ of $(\ZZ H)^E$ actually lies in $\ker(\del_{G_\beta})$, and
 maps to $\overline{\alpha}$ under $(\ZZ H)^E \rightarrow \redZH^E$.
\end{proof}

\begin{remark}
Although not needed later, for primes $p$ not dividing $m=|H|$, one can be more precise 
about the summand splitting off the $p$-primary component (or {\it Sylow $p$-subgroup}) $\Syl_p K(\tilde{G})$ 
of the critical group $K(\tilde{G})$, isomorphic to $\Syl_p K(G)$.
Since the group $H$ acts on the graph $\tilde{G}$ via graph automorphisms, it also acts on $\ZZ^{\tilde{E}}$,
preserving $\im \partial^t$ and $\ker \partial$, and inducing a (right-)action on the abelian group 
$K(\tilde{G})$.
Thus one can consider the subgroup of {\it $H$-invariants} within $K(\tilde{G})$:
$$
K(\tilde{G})^H:=\{x \in K(\tilde{G}): h(x) = x \text{ for all }h\text{ in }H\}.
$$

\begin{proposition}
In the setting of Theorem~\ref{regular-covering-exact-sequence}, for primes $p$ that do not divide $m=|H|$,
the map $\pi^t$ sends $\Syl_p K(G)$ isomorphically onto 
$
\Syl_p\left( K(\tilde{G})^H \right)$.

\end{proposition}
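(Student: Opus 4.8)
The plan is to play off two complementary relations between the covering maps $\pi$ and $\pi^t$ on critical groups: the relation $\pi\pi^t = m\cdot 1_{K(G)}$ already recorded in Proposition~\ref{covering-splitting-proposition}, and a companion relation $\pi^t\pi = \nu$, where $\nu := \sum_{h\in H} h$ is the averaging (norm) operator for the $H$-action on $K(\tilde{G})$. Restricting everything to $p$-primary components for $p \nmid m$ makes $m$ a unit, and the two relations will force the restriction of $\pi^t$ to be an isomorphism onto the invariants, via the standard observation that if $f,g$ are homomorphisms of finite abelian $p$-groups with both $gf$ and $fg$ equal to multiplication by a unit mod $p$, then $f$ is an isomorphism.

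First I would check that $\pi^t$ lands in the $H$-invariants. At the chain level, the basis vector $e$ of $\ZZ^E$ is sent by $\pi^t$ to $\sum_{h\in H} e_h$, which corresponds under the identification of Proposition~\ref{group-algebra-isomorphisms} to $e\cdot c$ with $c = \sum_{h\in H} T_h$; since $c$ is fixed by the right $H$-action, so is $\pi^t(\ZZ^E)$, whence $\pi^t(K(G)) \subseteq K(\tilde{G})^H$. (Here I use that the $H$-action descends to $K(\tilde{G})$ because it preserves both $\im \del_{\tilde{G}}^t$ and $\ker \del_{\tilde{G}}$.) Thus $\pi^t$ restricts to a map $\Syl_p K(G) \to \Syl_p\bigl(K(\tilde{G})^H\bigr)$, and $\pi$ restricts to a map in the opposite direction, both being group homomorphisms and hence respecting $p$-primary components.

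Next I would verify the relation $\pi^t\pi = \nu$. On $\ZZ^{\tilde{E}}$ one computes directly that $\pi^t\pi(e_{h_0}) = \sum_{h\in H} e_h$, while $\nu(e_{h_0}) = \sum_{h\in H} h(e_{h_0}) = \sum_{h\in H} e_{h_0 h}$ by \eqref{right-action-in-edge-fiber}, and reindexing shows these agree; this identity descends to $K(\tilde{G})$. On the invariant subgroup one has $\nu(x) = \sum_{h\in H} h(x) = m\,x$, so $\pi^t\pi = m\cdot 1$ on $K(\tilde{G})^H$, hence on $\Syl_p\bigl(K(\tilde{G})^H\bigr)$. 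Combined with $\pi\pi^t = m\cdot 1$ on $\Syl_p K(G)$, the two composites of the restricted maps $f := \pi^t|_{\Syl_p K(G)}$ and $g := \pi|_{\Syl_p(K(\tilde{G})^H)}$ are both multiplication by $m$. Since $p\nmid m$, multiplication by $m$ is an automorphism of each finite abelian $p$-group; thus $gf$ an automorphism forces $f$ injective, and $fg$ an automorphism forces $f$ surjective, so $f = \pi^t|_{\Syl_p K(G)}$ is the desired isomorphism onto $\Syl_p\bigl(K(\tilde{G})^H\bigr)$.

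The main obstacle is not the endgame but the bookkeeping that makes $\pi^t\pi = \nu$ meaningful: one must be sure the right $H$-action on $\ZZ^{\tilde{E}}$ genuinely descends to $K(\tilde{G})$, which requires that each automorphism $h$ preserve $\im \del_{\tilde{G}}^t$ and $\ker \del_{\tilde{G}}$ — this is where regularity (the action by graph automorphisms) is essential, and where the functoriality criterion \eqref{functoriality-assumptions} is invoked for the maps $h$. A secondary point to handle with care is the right-action convention \eqref{right-action-in-edge-fiber}, so that the reindexing $\sum_{h} e_{h_0 h} = \sum_{h'} e_{h'}$ is correct and $\pi^t\pi$ really equals $\nu$ rather than an inverse-twisted variant; once these conventions are pinned down, the remaining verifications are routine.
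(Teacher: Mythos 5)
Your proof is correct and takes essentially the same route as the paper's: the paper's argument also rests on the orbit-sum operator $\Omega = \sum_{h \in H} h$ (your $\nu$), the observations that its image in $K(\tilde{G})$ coincides with that of $\pi^t$ and that it acts as multiplication by $m$ on $K(\tilde{G})^H$, hence invertibly on $\Syl_p\bigl(K(\tilde{G})^H\bigr)$ when $p \nmid m$. The only cosmetic difference is that you make the chain-level identity $\pi^t \pi = \nu$ explicit and finish with a formal two-sided argument using $\pi\pi^t = m \cdot 1_{K(G)}$ and $\pi^t\pi = m \cdot 1$ on invariants, whereas the paper compares images, concluding $\Syl_p\bigl(K(\tilde{G})^H\bigr) = \Syl_p(\im \Omega) = \Syl_p(\im \pi^t)$, with injectivity of $\pi^t$ on $\Syl_p K(G)$ supplied by the already-established relation $\pi\pi^t = m \cdot 1_{K(G)}$.
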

\begin{proof}
Note that the {\it orbit-sum map}
$
\ZZ^{\tilde{E}} \overset{\Omega}{\longrightarrow} \ZZ^{\tilde{E}}
$
sending $e \longmapsto \sum_{h \in H} h(e)$
has the same image, namely the $H$-invariants $(\ZZ^E)^H$, as does the map
$\ZZ^{E} \overset{\pi^t}{\longrightarrow} \ZZ^{\tilde{E}}$.
Since $\Omega$ is a sum of graph automorphisms, it induces a map $K(\tilde{G}) \overset{\Omega}{\longrightarrow} K(\tilde{G})$,
which again has the same image as $K(G) \overset{\pi^t}{\longrightarrow} K(\tilde{G})$.  
This image lies in $K(\tilde{G})^H$. 
Note that the map $\Omega$ when restricted from 
$K(\tilde{G})$ to $K(\tilde{G})^H$ will act as multiplication by $m$.
Hence for primes $p$ that do not divide $m$, it induces
an isomorphism 
$\Syl_p\left( K(\tilde{G})^H \right) \overset{\Omega}{\longrightarrow} \Syl_p\left( K(\tilde{G})^H \right)$.
Consequently one has
$$
\Syl_p \left( K(\tilde{G})^H \right)
=\Syl_p \left( \im \Omega \right)
=\Syl_p \left( \im \pi^t \right). \qedhere
$$
\end{proof}

On the other hand, the map $\pi^t$ generally {\it fails} to 
induce an isomorphism between $p$-primary components
of $K(G)$ and $K(\tilde{G})^H$ for primes $p$ dividing $m=|H|$.  This occurs already for $H=\ZZ/2\ZZ$
in the double covering $\tilde{G} \rightarrow G$ of an $n$-cycle $G$ by a $2n$-cycle
$\tilde{G}$, where one can check that $K(\tilde{G})^H = K(\tilde{G}) = \ZZ_{2n}$, while $K(G) = \ZZ_n$. More generally, if $H=\ZZ/m\ZZ$ in the $m$-covering $\tilde{G}\rightarrow G$ of an $n$-cycle by an $mn$-cycle $\tilde{G}$, then $K(\tilde{G})^H = K(\tilde{G}) = \ZZ_{mn}$, while $K(G) = \ZZ_n$.
\end{remark}

\section{Voltage groups of prime order}
\label{prime-order-section}

When the voltage group $H$ is abelian, the group algebra $\ZZ H$ is a commutative ring,
as is the quotient ring $\redZH$, and the distinctions between right and left modules
over these rings disappear, simplifying some of the considerations of Sections~\ref{regular-coverings-section} and \ref{voltage-short-exact-sequence-section}.

Things simplify even further if the group $H$ has prime order $p$,
as $H$ is cyclic, say with generator $h$:
$$
H=\{1,h,h^2,\cdots,h^{p-1}\} \cong \ZZ_p.
$$
Letting $\zeta$ denote a primitive $p^{th}$ root of unity in $\CC$,
one has a well-defined surjective ring map induced by
$$
\begin{array}{rccl}
\ZZ H \cong &\ZZ[T_h]/(T_h^p-1)& \rightarrow & \ZZ[\zeta] \\
            &T_h & \longmapsto & \zeta.
\end{array}
$$
Since $\zeta$ has minimal polynomial $1+x+x^2+ \cdots + x^{p-1}$ over $\QQ$,
the kernel of the above map is exactly $I=\ZZ(1 + T_h+ T_{h^2} + \cdots+T_{h^{p-1}}) = \ZZ c$, 
and hence it induces an isomorphism
$$
\redZH \cong \ZZ[\zeta].
$$
Consequently one can regard the matrices $\overline{\del}_{G_\beta}$ and 
$\overline{\del}_{G_\beta}^*$
as elements of $\ZZ[\zeta]^{V \times E}$ and $\ZZ[\zeta]^{E \times V}$, 
and one can present the
critical group for the voltage graph $G_\beta$ as
$$
\begin{aligned}
K(G_{\beta})&: = \ZZ[\zeta]^E/  \left( \im \overline{\del}_{G_\beta}^* + \ker \overline{\del}_{G_\beta} \right) \\
&\cong 
\im\overline{\del}_{G_\beta} /
\im \overline{\del}_{G_\beta} \overline{\del}_{G_\beta}^*
\end{aligned}
$$
where $\im\overline{\del}_{G_\beta}$ and $\im \overline{\del}_{G_\beta} \overline{\del}_{G_\beta}^*$
are $\ZZ[\zeta]$-submodules of $\ZZ[\zeta]^V$.
Note that under the isomorphism 
$\redZH \cong \ZZ[\zeta]$, the 
(anti-)automorphism $\overline{T}_h \mapsto \overline{T}_{h^{-1}}$
of $\redZH$ corresponds to complex conjugation $z \mapsto \bar{z}$.
Hence the matrix operation $M \mapsto M^*$ is now the usual
conjugate-transpose operation $M^*=\bar{M}^t$.

\begin{example}
Consider the regular cover $\tilde{G} \rightarrow G$ of Example~\ref{octahedron-example}, where $\tilde{G}$ is the graph of the octahedron, and the
transformation group $H =\{1,h,h^2\}$ has prime order $p=3$.  The map 
$T_h \mapsto \zeta=e^{\frac{2\pi i}{3}}$ identifies $\redZH \cong \ZZ[\zeta]$, and
under this identification one has
$$
\overline{\del}_{G_\beta}=
\bordermatrix{ 
  & a & b     & c        & d \cr
u &+1 &+1     &+1-\zeta &0  \cr
v &-1 &-\zeta &0        &+1-\zeta 
}
\qquad 
\overline{\del}_{G_\beta} \overline{\del}_{G_\beta}^*=
\bordermatrix{ 
  & u & v \cr
u &+5       &-1-\zeta \cr
v &-1-\zeta &+5 
}
$$
To understand $\im\overline{\del}_{G_\beta}, \im \overline{\del}_{G_\beta} \overline{\del}_{G_\beta}^*$,
one can use row and column operations invertible over the {\it principal ideal domain} 
$\ZZ[\zeta]$ to bring these two matrices to their unique Smith normal forms  {\it over
$\ZZ[\zeta]$}, namely 
$$
\left(
\begin{matrix}
+1 &0        &0 &0  \cr
 0 &+1-\zeta &0 &0
\end{matrix}
\right)
\quad
\text{ and}
\quad 
\left(
\begin{matrix}
1 &0 \cr
0 &24
\end{matrix}
\right).
$$
This shows that 
$$
\begin{array}{rlll}
\ZZ[\zeta]^V/\im\overline{\del}_{G_\beta} \overline{\del}_{G_\beta}^* 
 &\cong \ZZ[\zeta]/24\ZZ[\zeta]  
 &\cong \ZZ_{24}^2 
 &\cong \ZZ_{2^3}^2 \oplus \ZZ_3^2\\
\ZZ[\zeta]^V/\im\overline{\del}_{G_\beta} 
 &\cong \ZZ[\zeta]/(+1-\zeta)\ZZ[\zeta]  
 &\cong \ZZ_3
 &
\end{array}
$$
and therefore one must have
$$
\im\overline{\del}_{G_\beta}
/\im\overline{\del}_{G_\beta} \overline{\del}_{G_\beta}^* 
\cong 
\ZZ_{2^3}^2 \oplus \ZZ_3.
$$
An easy calculation shows that $K(G) = \ZZ_2$ (e.g. observe that $|K(G)|=2$ since
$G$ has only two spanning trees).
Hence the exact sequence from Theorem~\ref{regular-covering-exact-sequence}
must look as follows:
$$
\begin{array}{rcccccl}
0 \rightarrow & K(G) & \rightarrow & K(\tilde{G}) & \rightarrow & K(G_\beta) & \rightarrow 0 \\
              & \Vert&             &         &             & \Vert  & \\
              &\ZZ_2 &             &         &             & \ZZ_{2^3}^2 \oplus \ZZ_3 &
\end{array}
$$
Since the theorem tells us that this sequence splits at the 
$p$-primary components for $p \neq 3$, one concludes from this that
the octahedron graph $\tilde{G}$ has 
$$
K(\tilde{G}) \quad
\cong 
\quad
\ZZ_2 \oplus \ZZ_{2^3}^2 \oplus  \ZZ_3 
\quad
\cong 
\quad
\ZZ_2 \oplus \ZZ_8 \oplus \ZZ_{24}
$$
in agreement with the known answer (see e.g. \cite[\S 9.4.2]{BMMPR}).
\end{example}

\section{Voltage groups of order 2: double covers and signed graphs}
\label{order-two-section}

The situation is particularly simple when $p=2$, that is, for {\it double coverings}. 
As mentioned in Example~\ref{double-covers-are-regular-example},
graph double covers are {\it always} regular, with transformation group $H=\ZZ_2 =\{1,h\}$
identified with the two voltages $\{+,-\}$.  Thus 
a voltage graph $G_\beta$ as a function $E \rightarrow H=\ZZ_2=\{+,-\}$
is the same as a signed graph $G_\pm$ as defined in the Introduction.

Note that here $\zeta=-1$ and the isomorphism $\redZH \cong \ZZ[\zeta]=\ZZ$
sends $h \mapsto \zeta=-1$.  Also the anti-automorphism $T_h \mapsto T_{h^{-1}}$
of $\ZZ H$ and of $\redZH$ has become trivial, so that $\del_{G_\beta}^*=\del_{G_\beta}^t$.

\begin{proposition}
For a double cover corresponding to a signed graph $G_\pm$,
the matrix $\del_{G_\pm}$ from the Introduction 
is the same as $\del_{G_\beta}$ as in Definition~\ref{voltage-graph-Laplacian}.
In particular, the critical group $K(G_\beta)$ is exactly $K(G_\pm)$ as defined
in \eqref{signed-K(G)-presentation} in the Introduction.
\end{proposition}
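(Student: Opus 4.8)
The plan is to verify directly that, under the identification $\redZH \cong \ZZ$ sending $\overline{T}_h \mapsto -1$ described just above the statement, the reduced voltage incidence matrix $\overline{\del}_{G_\beta}$ has exactly the columns prescribed by Zaslavsky's definition of $\del_{G_\pm}$, and then to read off the equality of the two critical groups from the fact that both are presented by the same data. The argument is a column-by-column bookkeeping check followed by a substitution.

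First I would fix an edge $e$ oriented as $e=(u,v)$ and split into two cases according to its voltage, equivalently its sign. By \eqref{group-algebra-del-definition} the column of $\del_{G_\beta}$ indexed by $e$ is $+u - v\,T_{\beta(e)}$. When $\beta(e)=\one$ (the voltage $+$), reduction modulo $I$ together with $\overline{T}_\one \mapsto 1$ yields the column $+u-v$, matching Zaslavsky's positive case from the Introduction. When $\beta(e)=h$ (the voltage $-$), the column is $+u - v\,T_h$, and since $\overline{T}_h \mapsto -1$ under $\redZH \cong \ZZ$, it becomes $+u - v\cdot(-1) = +u+v$, matching Zaslavsky's negative case. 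Hence $\overline{\del}_{G_\beta} = \del_{G_\pm}$ as integer matrices in $\ZZ^{V \times E}$.

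Next I would invoke the observation recorded immediately before the statement: for $H=\ZZ_2$ the anti-automorphism $T_h \mapsto T_{h^{-1}}$ is the identity, so the operation $M \mapsto M^*$ collapses to ordinary transposition, giving $\overline{\del}^{\,*}_{G_\beta} = \overline{\del}^{\,t}_{G_\beta} = \del^t_{G_\pm}$. Since $\redZH \cong \ZZ$, the defining presentation of $K(G_\beta)$ reads $\redZH^E/\bigl(\im \overline{\del}^{\,*}_{G_\beta} + \ker \overline{\del}_{G_\beta}\bigr) = \ZZ^E/\bigl(\im \del^t_{G_\pm} + \ker \del_{G_\pm}\bigr)$, which is exactly $K(G_\pm)$ as in \eqref{signed-K(G)-presentation}; the Proposition~\ref{concordance-prop} isomorphism then identifies the two $\im\del/\im\del\del^t$ forms as well.

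I expect no genuine difficulty here: the entire verification is routine. The only point demanding care is keeping the orientation conventions aligned between the two definitions—specifically confirming that both $\del_{G_\beta}$ and $\del_{G_\pm}$ use the same pulled-back orientation of each edge $e=(u,v)$, so that the ``$+u$'' terms coincide verbatim and the voltage/sign contributes only the flip on the ``$v$'' term. Once that alignment is fixed, the two-case computation above closes the argument.
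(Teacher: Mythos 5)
Your proposal is correct and follows essentially the same route as the paper's own proof: the paper likewise verifies column-by-column that $e \mapsto +u - v\,T_{\beta(e)}$ reduces, under $\overline{T}_{\beta(e)} \mapsto \pm 1$, to Zaslavsky's $+u-v$ (positive edge) or $+u+v$ (negative edge), and it records just before the statement that $\del_{G_\beta}^* = \del_{G_\beta}^t$ since the anti-automorphism is trivial for $H=\ZZ_2$. Your write-up merely spells out these same steps in more detail, including the orientation-alignment caveat, which is consistent with the paper's conventions.
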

\begin{proof}
In the Introduction, $\del_{G_\pm}$ mapped a positive (resp. negative) edge $e$ directed 
as $(u,v)$ to $+u-v$ (resp. $+u+v$),
which agrees with the action of $\del_{G_\beta}$ as $e \longmapsto +u-v T_{\beta(e)}$,
since $T(\beta(e)) \mapsto +1,-1$ depending upon whether $e$ is a positive, negative edge.
\end{proof}

\noindent
Consequently, Theorem~\ref{regular-covering-exact-sequence} immediately implies the
following result from the Introduction.

\vskip.1in
\noindent
{\bf Theorem~\ref{unsigned-double-cover-theorem}.} 
{\it 
For each signed graph $G_{\pm}$, parametrizing a graph double covering
$\tilde{G} \rightarrow G$, one has a short exact sequence of critical groups
$$
0 \rightarrow K(G_\pm) \rightarrow K(\tilde{G}) \rightarrow K(G) \rightarrow 0
$$
splitting on restriction to $p$-primary components for odd primes $p$.
In particular, $|K(\tilde{G})| = |K(G_\pm)| \cdot |K(G)|$.
}
\vskip.1in
\noindent

\noindent
Theorem~\ref{unsigned-double-cover-theorem} will be generalized in a different 
direction in Theorem~\ref{signed-graph-double-cover-complex} below, 
after we generalize (in Section~\ref{signed-graph-double-cover-section})
the notion of 
double coverings of unsigned graphs to {\it double coverings of signed graphs}.

\subsection{Example: Bipartite double covers and crowns}

\begin{definition}
Given an unsigned multigraph $G=(V,E)$, its {\it bipartite double cover} 
(see, e.g. Waller \cite{Waller}) is the double cover $\tilde{G} \rightarrow G$
associated to the signed graph which 
Zaslavsky \cite[\S 7.D]{Zaslavsky} calls the {\it all-negative assignment} $G_\beta=G_\pm=-G$, in which every edge $e$ in $E$ has $\beta(e)=-$.
The bipartite double cover of $G$ is
sometimes also called the {\it tensor product} or {\it categorical product}
$G \times K_2$, where $K_2$ is the unsigned graph consisting of a single edge between
two vertices.
\end{definition}

When $G$ is highly symmetric, the same is true of the all negative signed graph $-G$, sometimes leading to an easy computation of both $K(G), K(-G)$,
where Theorem~\ref{unsigned-double-cover-theorem} is easy to apply.

\begin{example}
\label{crown-example}
The {\it $n$-crown graph} $\crown_n$ is the unsigned graph obtained from the 
complete bipartite graph $K_{n,n}$ on bipartitioned
vertex set $V=\{v^{(1)}_+,\ldots,v^{(n)}_+\}  \sqcup \{v^{(1)}_-,\ldots,v^{(n)}_-\}$ by removing the
perfect matching of edges $M=\{ \{v^{(i)}_+,v^{(i)}_-\}:i=1,2,\ldots,n \}$.  
More generally, define $\crown_n^{(k)}$ to be the multigraph obtained from $\crown_n$ by adding back in $k$ copies of each edge from the 
perfect matching $M$ that was removed.  Equivalently, 
$\crown_n^{(k)}$ is the multigraph obtained from $K_{n,n}$ by
adding $k-1$ copies of the perfect matching $M$.
In particular, taking $k=1$, the
graph $\crown_n^{(1)}$ recovers $K_{n,n}$ itself.

Let $K_n^{(m)}$ be the multigraph obtained from the complete graph $K_n$ on vertex set
$\{v^{(1)},\ldots,v^{(n)}\}$ by adding $m$ multiple copies of a self-loop to every vertex $v^{(i)}$. 

The following proposition is then straightforward.

\begin{proposition}
{\it When $k$ is even}, $\crown_n^{(k)}$ provides
the bipartite double covering of $K_n^{(\frac{k}{2})}$,
via the map 
$$
\begin{array}{rcl}
\tilde{G}:=\crown_n^{(k)} & \overset{\pi}{\longrightarrow} 
                              & K_n^{(\frac{k}{2})}=:G\\
v^{(i)}_+& \longmapsto & v^{(i)} \\
v^{(i)}_- & \longmapsto & v^{(i)} \\
\{v^{(i)}_+,v^{(j)}_-\}  & \longmapsto & \{v^{(i)},v^{(j)}\}\\
\{v^{(j)}_+,v^{(i)}_-\}  & \longmapsto & \{v^{(i)},v^{(j)}\}
\end{array}
$$
that also sends the extra $k$ copies of the
matching edge $\{v^{(i)}_+,v^{(i)}_-\}$ to the $\frac{k}{2}$ copies of
the loop edge on $v^{(i)}$.  
\end{proposition}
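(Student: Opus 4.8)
The plan is to verify directly that the bipartite double cover of $G=K_n^{(k/2)}$, namely the double cover determined by the all-negative signing $-G$, is isomorphic to $\crown_n^{(k)}$ via the stated map $\pi$. By the lifting rule for the double cover attached to a signed graph (recalled in the Introduction), each vertex $v^{(i)}$ of $G$ has exactly two preimages $v^{(i)}_+,v^{(i)}_-$, so $\tilde G$ has vertex set $\{v^{(i)}_\pm\}_{i=1}^n$, which is precisely the bipartitioned vertex set of $\crown_n^{(k)}$. It then remains to check that the edge multisets agree, and I would do this by treating the two kinds of edges of $K_n^{(k/2)}$ separately.

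First I would handle the non-loop edges. Since every edge of $-G$ is signed $-$, an edge $e=\{v^{(i)},v^{(j)}\}$ with $i\neq j$ lifts to the two edges $e_+=\{v^{(i)}_+,v^{(j)}_-\}$ and $e_-=\{v^{(i)}_-,v^{(j)}_+\}$. As $\{i,j\}$ ranges over the $\binom{n}{2}$ unordered pairs, these lifts produce exactly the edges $\{v^{(i)}_+,v^{(j)}_-\}$ with $i\neq j$, each appearing once (the pair $\{i,j\}$ contributes the two choices of which index sits on the $+$ side). These are precisely the edges of $K_{n,n}$ on the bipartition $\{v^{(i)}_+\}\sqcup\{v^{(i)}_-\}$ with the perfect matching $M$ removed, i.e.\ the edges of $\crown_n$.

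The key step, and the place where evenness of $k$ enters, is the treatment of the self-loops. Applying the negative-edge lifting rule to a loop $e=\{v^{(i)},v^{(i)}\}$ (so $u=v=v^{(i)}$) yields $e_+=\{v^{(i)}_+,v^{(i)}_-\}$ and $e_-=\{v^{(i)}_-,v^{(i)}_+\}$, i.e.\ two parallel copies of the single matching edge $\{v^{(i)}_+,v^{(i)}_-\}$. Hence each of the $k/2$ negative loops at $v^{(i)}$ contributes two such copies, for a total of $k$ copies of the matching edge at $v^{(i)}$---exactly the $k$ copies added back to $\crown_n$ to form $\crown_n^{(k)}$. This is why one needs $k$ even, with $k/2$ loops in the base. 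I expect this loop bookkeeping to be the only genuinely delicate point; it amounts to the observation that a negative loop \emph{doubles} upon lifting.

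Finally I would observe that, under the identification just established, the covering map $\tilde G\to G$ that forgets the $\pm$ subscripts is exactly the map $\pi$ of the statement: it sends $v^{(i)}_\pm\mapsto v^{(i)}$, carries both lifts of a non-loop edge $\{v^{(i)},v^{(j)}\}$ onto that edge, and carries the $k$ matching copies at $v^{(i)}$ two-to-one onto the $k/2$ loops at $v^{(i)}$. Since the vertex sets, the edge multisets, and the forgetful map all agree, $\crown_n^{(k)}$ is the bipartite double cover of $K_n^{(k/2)}$ via $\pi$, as claimed.
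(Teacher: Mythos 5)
Your proof is correct: the paper states this proposition without proof, declaring it straightforward, and your direct verification---lifting the non-loop edges of $K_n^{(k/2)}$ under the all-negative signing to obtain exactly the edges of $\crown_n$, and observing that each negative loop lifts to two parallel copies of the matching edge $\{v^{(i)}_+,v^{(i)}_-\}$, so that $k/2$ loops yield the $k$ matching copies---is precisely the routine check the authors intend. Your loop bookkeeping also matches the paper's conventions, as confirmed by the introductory example where the three negative loops $g,h,i$ lift to six edges between $1_+$ and $1_-$.
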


\begin{example}
For $n=4, k=2$, here is a depiction of 
the bipartite double covering
$\crown_4^{(2)} \rightarrow K_4^{(1)}$:

\begin{center}
\begin{tikzpicture}[scale=0.75, auto,swap,every loop/.style={}]
    \foreach \pos/\name in {{(0,5)/v^{(1)}_+},{(3,4)/v^{(2)}_+},
                               {(6,4)/v^{(3)}_+},{(9,5)/v^{(4)}_+}}
        \node[vertex] (\name) at \pos {$\name$};
   \foreach \pos/\name in {{(0,1)/v^{(1)}_-},{(3,0)/v^{(2)}_-},
                               {(6,0)/v^{(3)}_-},{(9,1)/v^{(4)}_-}}
        \node[vertex] (\name) at \pos {$\name$};
 
    \foreach \source/ \dest in {
        v^{(1)}_+/v^{(2)}_-,
        v^{(1)}_+/v^{(3)}_-,
        v^{(1)}_+/v^{(4)}_-,
        v^{(2)}_+/v^{(3)}_-,
        v^{(2)}_+/v^{(4)}_-,
        v^{(2)}_+/v^{(1)}_-,
        v^{(3)}_+/v^{(2)}_-,
        v^{(3)}_+/v^{(4)}_-,
        v^{(3)}_+/v^{(1)}_-,
        v^{(4)}_+/v^{(1)}_-,
        v^{(4)}_+/v^{(2)}_-,
        v^{(4)}_+/v^{(3)}_-}
        \path[edge] (\source) -- (\dest);

    \foreach \source/ \dest in {v^{(1)}_+/v^{(1)}_-,v^{(2)}_+/v^{(2)}_-,v^{(3)}_+/v^{(3)}_-,v^{(4)}_+/v^{(4)}_-}
       \path (\source) edge [bend left] (\dest);
    \foreach \source/ \dest in {v^{(1)}_+/v^{(1)}_-,v^{(2)}_+/v^{(2)}_-,v^{(3)}_+/v^{(3)}_-,v^{(4)}_+/v^{(4)}_-}
       \path (\source) edge [bend right] (\dest);

\draw[arrows=->,line width=1pt](4.5,0)--(4.5,-1);

   \foreach \pos/\name in {{(0,-1.5)/v^{(1)}},{(3,-2.5)/v^{(2)}},
                               {(6,-2.5)/v^{(3)}},{(9,-1.5)/v^{(4)}}}
        \node[vertex] (\name) at \pos {$\name$};

    \foreach \source/ \dest in {
        v^{(1)}/v^{(2)},
        v^{(1)}/v^{(3)},
        v^{(1)}/v^{(4)},
        v^{(2)}/v^{(3)},
        v^{(2)}/v^{(4)},
        v^{(3)}/v^{(4)}}
        \path[edge] (\source) -- (\dest);

\foreach \source in {v^{(1)},v^{(2)},v^{(3)},v^{(4)}}    
	\path[every node/.style={font=\sffamily\small}]
        (\source)   edge[in=-50,out=-130,loop]  (\source);

\end{tikzpicture}
\end{center}
\end{example}

\begin{corollary}
\label{first-crown-corollary}
For $k$ even\footnote{We will be able to remove this assumption that $k$ is even in Section~\ref{revisited-crown-section} below, 
after allowing for negative half-loops in signed graphs and double covers.} 
and $n$ odd, 
$$
K(\crown_n^{(k)}) 
\cong \ZZ_n^{n-2} \oplus \ZZ_{n-2+2k}^{n-2} \oplus \ZZ_{(n-1+k)(n-2+2k)}^{n-2} 
$$
\end{corollary}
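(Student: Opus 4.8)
The plan is to read off both ends of the short exact sequence supplied by Theorem~\ref{unsigned-double-cover-theorem} for the bipartite double cover $\crown_n^{(k)} \to K_n^{(k/2)}$ (which exists precisely because $k$ is even), whose parametrizing signed graph is the all-negative graph $G_\pm = -K_n^{(k/2)}$. Writing $m=k/2$, this gives
$$
0 \rightarrow K(-K_n^{(m)}) \rightarrow K(\crown_n^{(k)}) \rightarrow K(K_n^{(m)}) \rightarrow 0,
$$
splitting at odd primes. First I would exploit the hypothesis that $n$ is \emph{odd}: the spanning-tree count $n^{n-2}$ of $K_n$ is then odd, so the quotient $K(K_n^{(m)})$ has odd order, its $2$-part vanishes, and the extension is forced to split at $p=2$ as well. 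Together with the odd-prime splitting this shows $K(\crown_n^{(k)}) \cong K(K_n^{(m)}) \oplus K(-K_n^{(m)})$ as abelian groups, so it suffices to identify the two summands.

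For the base graph I would observe that every self-loop of $K_n^{(m)}$ maps to $0$ under $\del$, hence lies in $\ker\del$ and is killed in the quotient of Definition~\ref{edge-presentation}; loops are therefore irrelevant, and $K(K_n^{(m)}) \cong K(K_n) \cong \ZZ_n^{n-2}$ is the classical critical group of the complete graph.

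The substantive computation is the signed summand $K(-K_n^{(m)}) = \im \del / \im \del\del^t$. The all-negative rule $e=(u,v)\mapsto +u+v$ (and $2v$ for a negative loop) makes the signed Laplacian $L=\del\del^t$ equal to $(n-1+4m)I+(J-I) = aI+J$ with $a:=n-2+2k$. Since $-K_n$ contains an all-negative triangle for $n\geq 3$ it is unbalanced, so $\del$ has full rank $n$; I would then check that $\im\del$ is exactly the index-$2$ sublattice $\ZZ^n_{\equiv 0 \bmod 2}$ of even-coordinate-sum vectors (each generator $e_u+e_v$ and $2e_v$ has even sum, and these visibly span the whole even-sum lattice). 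Using the column relations $L e_j = a e_j + \mathbf 1$, i.e. $a\,\overline{e_j} = -\overline{\mathbf 1}$ in $\ZZ^n/L\ZZ^n$, a short presentation argument gives $\ZZ^n/L\ZZ^n \cong \ZZ_a^{n-2}\oplus\ZZ_{a(a+n)}$ with $a+n = 2(n-1+k)$. Finally I would intersect with the even-sum lattice: the coordinate-sum-mod-$2$ map is nonzero only on the $\ZZ_{a(a+n)}$-generator, and $a(a+n)$ is even, so passing to its kernel halves that one factor, yielding
$$
K(-K_n^{(m)}) \cong \ZZ_{n-2+2k}^{n-2} \oplus \ZZ_{(n-2+2k)(n-1+k)}.
$$
Combining this with $K(K_n^{(m)})\cong\ZZ_n^{n-2}$ produces the asserted decomposition, with a single copy of the top cyclic factor $\ZZ_{(n-1+k)(n-2+2k)}$.

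The main obstacle is the final stretch of the signed computation: correctly pinning down $\im\del$ as the even-sum lattice and then tracking how restricting from $\ZZ^n$ to this index-$2$ sublattice interacts with the Smith normal form of $aI+J$. This is exactly where the prime $2$ enters, since the top invariant factor $a(a+n)$ is even and gets halved; care is needed to see that precisely \emph{one} cyclic factor $\ZZ_{(n-1+k)(n-2+2k)}$ survives (everything away from $p=2$ being forced by the orders and the odd-prime splitting). As a sanity check I would verify the order $|K(\crown_n^{(k)})| = n^{n-2}(n-2+2k)^{n-1}(n-1+k)$ against the spanning-tree count coming from the block Laplacian $\left(\begin{smallmatrix} (n+1)I & -(J+I)\\ -(J+I) & (n+1)I\end{smallmatrix}\right)$, whose eigenvalues are $n+1\pm\mu$ for $\mu$ in the spectrum of $J+I$.
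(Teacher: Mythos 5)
Your proposal is correct and takes essentially the same approach as the paper: the same short exact sequence from Theorem~\ref{unsigned-double-cover-theorem} applied to the bipartite double cover $\crown_n^{(k)} \rightarrow K_n^{(k/2)}$, the same forced splitting at $p=2$ from the oddness of $|K(K_n^{(k/2)})|=n^{n-2}$, and the same identification of $K(-K_n^{(k/2)})$ as the index-two even-coordinate-sum subgroup of $\ZZ^n/\im\bigl((n-2+2k)I+J\bigr)$, with the parity of $n-2+2k$ (odd, since $n$ is odd) pinning down that only the largest cyclic factor can halve; the only cosmetic difference is that the paper cites the Smith normal form lemma of \cite[Prop 4.2(v)]{Jacobson} for $bI-aJ$ where you derive the cokernel by hand from column relations. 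Note also that your conclusion with a \emph{single} copy of $\ZZ_{(n-1+k)(n-2+2k)}$ is exactly what the paper's own proof yields: the exponent $n-2$ on that last factor in the corollary's displayed statement is a typo.
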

\begin{proof}
For $G=K_n^{(\frac{k}{2})}$, both the unsigned graph Laplacian $L(G)=\del_G \del_G^t$
and the all-negative signed graph Laplacian $L(-G)=\del_{-G} \del_{-G}^t$
are $n \times n$ matrices of the form 
$
M_n(b,a) = b I_{n \times n} - a J_{n \times n}
$
where $I, J$ are the identity and all ones matrices, respectively.
Specifically, 
$$
\begin{aligned}
L(G)&=M_n(n,1) \\
L(-G)&=M_n(n-2+2k,-1). \\
\end{aligned}
$$
Hence one can begin the calculation of $K(G)$ and $K(-G)$ with an
easy general computation (see \cite[Prop 4.2(v)]{Jacobson})
showing $M_n(b,a)$ has Smith normal form entries
$$
\left(\gcd(a,b), \,\, \underbrace{b,b,\ldots,b}_{n-2\text{ times}}, \,\,  
\frac{b(b-na)}{\gcd(a,b)} \right).
$$
For $L(G)$ this gives Smith normal form entries
$
(1,n,\ldots,n, 0 )
$
and $K(G)=\im \del_G / \im L(G) \cong \ZZ_n^{n-2}$, as is well-known.
For $L(-G)$ it gives Smith entries
$$
(1, \,\, \underbrace{n-2+2k,\ldots,n-2+2k}_{n-2\text{ times}},  \,\,2(n-1+k)(n-2+2k) )
$$
and hence 
\begin{equation}
\label{minus-complete-smith-quotient}
\ZZ^n/ \im L(-G) 
\quad \cong \quad
\ZZ_{n-2+2k}^{n-2} \oplus \ZZ_{2(n-1+k)(n-2+2k)}.
\end{equation}
One can also easily check 
(see Proposition~\ref{connected-signed-graph-im-of-del} below) 
that $\im \del_{-G}$ is the index two sublattice $\ZZ^n_{\equiv 0\bmod{2}}$
 of $\ZZ^n$ where the sum of the entries is even.  
Hence $K(-G)=\im \del_{-G} / \im L(-G)$
must be a subgroup of index two within the group $\ZZ^n/ \im L(-G)$
described in \eqref{minus-complete-smith-quotient} above.
If {\it one assumes that $n$ is odd}, which we will do for the remainder of this calculation, so that $n-2+2k$ is also odd, then the only summand
in  \eqref{minus-complete-smith-quotient} having a subgroup of
index $2$ is the last summand $\ZZ_{2(n-1+k)(n-2+2k)}$.
Hence this forces
$$
K(-G) \quad \cong \quad \ZZ_{n-2+2k}^{n-2} \oplus \ZZ_{(n-1+k)(n-2+2k)}
$$
for $n$ odd\footnote{Actually, with a bit more matrix manipulation, 
one can draw this same conclusion {\it for all $n$}; see Tseng \cite[\S 8.1]{Tseng}.}.
Thus the short exact sequence from 
Theorem~\ref{unsigned-double-cover-theorem} 
takes the form

\begin{equation}
\label{crown-short-exact-sequence}
\begin{array}{rcccccl}
0 \rightarrow & K(G) & \rightarrow & K(\tilde{G}) & \rightarrow & K(-G) & \rightarrow 0 \\
              & \Vert&             &         &             & \Vert  & \\
              & &             &            &             &    & \\
              &\ZZ_n^{n-2} &             &            &             & \ZZ_{n-2+2k}^{n-2}  & \\
              & &             &            &             &  \oplus  & \\
              & &             &            &             &  \ZZ_{(n-1+k)(n-2+2k)}. &
\end{array}
\end{equation}
Since the theorem also tells us this sequence splits at 
$p$-primary components for all odd primes $p$, and since $K(G)=\ZZ_n^{n-2}$ 
only has odd primary components for $n$ odd, the sequence must
split at all primes.  Therefore 
$$
K(\crown_n^{(k)}) 
\quad = \quad
K(\tilde{G}) 
\quad \cong \quad K(G) \oplus K(-G) 
\quad \cong \quad
\ZZ_n^{n-2} \oplus \ZZ_{n-2+2k}^{n-2} \oplus \ZZ_{(n-1+k)(n-2+2k)}^{n-2}. 
\qedhere
$$
\end{proof}

\noindent
We remark that, for $k=0$, this answer for $n$ odd agrees with
a result of Machacek \cite[Theorem 14]{Machacek} 
\begin{equation}
\label{Machacek's-crown-calculation}
K(\crown_n) \quad \cong \quad 
\ZZ_{n-2}
\oplus
\ZZ^{n-3}_{n(n-2)}
\oplus
\ZZ_{n(n-1)(n-2)}
\end{equation}
proven correct for {\it all} $n$ (not just $n$ odd)
via Smith normal forms.  
See also Remark~\ref{Machacek-n-even-crown-remark}
below.
\end{example}

\section{Application: when the voltage graph Laplacian is diagonal}
\label{diagonal-application-section}

The voltage graph Laplacian $L(G_\beta)$ defined in
\eqref{voltage-graph-Laplacian} has a peculiar feature
that happens only when the voltage group $H$ is nontrivial:  
nonempty voltage graphs $G_\beta$ can have a {\it diagonal} $L(G_\beta)$.
We describe such a situation, giving a result that
uses this diagonal structure, then apply it to three families of examples.

\subsection{The construction}

\begin{definition}
For a positive integer $m \geq 2$ and a multigraph $G=(V,E)$, let
$mG=(V,mE)$ denote the multigraph on the same vertex set $V$ 
in which each edge $e$ in $E$ has been replicated into $m$ copies.

Given a group $H$ of order $|H|=m$, and a multigraph $G=(V,E)$,
let $HG$ denote the $H$-voltage graph whose underlying multigraph is $mG$, so that
its edges can be labelled $\{e^{(h)}\}_{e \in E, h \in H}$, and
with voltage assignment $\beta(e^{(h)})=h$.
\end{definition}

\begin{proposition}
\label{diagonal-voltage-Laplacian-prop}
Consider  a group $H$ of order $m \geq 2$, and
a connected multigraph $G=(V,E)$ with degree sequence $(d_1,\ldots,d_{|V|})$
of $G$, in which loops count $2$ toward
the degree of a vertex.
Then the voltage graph Laplacian $L(HG)$ in $\redZH^{V \times V}$
is the diagonal matrix whose entries are $(md_1,\ldots,md_{|V|})$.

Furthermore, after uniquely expressing
$
\bigoplus_{i=1}^{|V|} \ZZ_{d_i} \cong \bigoplus_{i=1}^{|V|} \ZZ_{s_i}
$
for positive integers $s_1,\ldots,s_{|V|}$ with $s_i$ dividing $s_{i+1}$, one has
\begin{equation}
\label{diagonal-critical-group}
K(HG) \cong \ZZ_{s_1} \oplus \ZZ_{ms_1}^{m-2} \oplus
\bigoplus_{i=2}^{|V|} \ZZ_{ms_i}^{m-1}.
\end{equation}
In particular, whenever $m$ is relatively prime to all the
degrees $d_i$, one can rewrite this as
$$
K(HG) \cong \ZZ_m^{(m-1)|V|-1} \oplus \bigoplus_{i=1}^{|V|} \ZZ_{d_i}^{m-1}.
$$
\end{proposition}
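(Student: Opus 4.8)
The plan is to first pin down the diagonal form of $L(HG)$ by a direct entrywise computation, and then feed it into the presentation $K(HG)\cong\im\overline{\del}_{HG}/\im L(HG)$ from \eqref{voltage-graph-critical-group-via-Laplacian} and extract the group structure by a Smith-normal-form argument. For the first part I would read off the columns of $\del_{HG}$: an edge $e^{(h)}$ of $mG$ lying over $e=(u,v)$ carries voltage $h$, so by \eqref{group-algebra-del-definition} its column is $+u-vT_h$, and $+u(1-T_h)$ when $e$ is a loop at $u$. Forming $\del_{HG}\del_{HG}^*$ over $\ZZ H$ and reducing modulo $I$, each off-diagonal $(u,v)$-entry is a sum over $h\in H$ of terms $\pm T_h$ or $\pm T_{h^{-1}}$ totalling $\pm c$, hence vanishing in $\redZH$; the diagonal $(v,v)$-entry collects $1$ from each of the two ends of every non-loop edge at $v$ (giving $m$ per edge after summing over $h$) and $2-T_h-T_{h^{-1}}$ from each loop at $v$ (summing to $2m-2c\equiv 2m$), which totals exactly $md_v$. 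This gives $L(HG)=\mathrm{diag}(md_1,\dots,md_{|V|})$.

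Next I would identify the two $\redZH$-lattices in $K(HG)\cong\im\overline{\del}_{HG}/\im L(HG)$. Since $L(HG)=\mathrm{diag}(md_i)$ is scalar-diagonal and $\redZH$ is free of rank $m-1$ over $\ZZ$, one has $\redZH^V/\im L(HG)\cong\bigoplus_i(\ZZ_{md_i})^{m-1}$. For $\im\overline{\del}_{HG}$ I would use that $G$ is connected: the columns $u-v$ (voltage $1$) already generate $\ker\sigma$ for the coordinate-sum map $\sigma\colon\redZH^V\to\redZH$, while the differences $(u-vT_h)-(u-v)=v(1-T_h)$, together with right multiplication, produce $v\cdot J$ for every vertex $v$, where $J\subset\redZH$ is the $\ZZ$-span of $\{\overline{T_h}-1\}$. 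Hence $\im\overline{\del}_{HG}=\sigma^{-1}(J)$, so $\redZH^V/\im\overline{\del}_{HG}\cong\redZH/J\cong\ZZ_m$ via augmentation. As $\im L(HG)\subseteq\im\overline{\del}_{HG}$, this produces the short exact sequence
$$
0\to K(HG)\to\bigoplus_i(\ZZ_{md_i})^{m-1}\xrightarrow{\ \phi\ }\ZZ_m\to 0,
$$
where, in the $\ZZ$-basis $\{\overline{T_h}\}_{h\neq 1}$ of each $\redZH$-summand (all of whose elements augment to $1$), the map $\phi$ is the total coordinate sum reduced modulo $m$.

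The remaining computation of $\ker\phi$ is the main obstacle. I would first apply, inside each block $(\ZZ_{md_i})^{m-1}$, the unimodular change of coordinates replacing the first coordinate by the sum of all $m-1$ coordinates; this splits off a summand $(\ZZ_{md_i})^{m-2}\subseteq\ker\phi$ on which $\phi$ vanishes and reduces the problem to $\ker\bigl(\phi'\colon\bigoplus_i\ZZ_{md_i}\to\ZZ_m\bigr)$, total sum mod $m$. Writing $\ker\phi'$ as the quotient of the index-$m$ lattice $\{w\in\ZZ^{|V|}:\sum_i w_i\equiv 0\bmod m\}$, which has basis $me_1,\,e_2-e_1,\dots,e_{|V|}-e_1$, by the sublattice generated by the vectors $md_ie_i$, I would obtain an explicit presentation matrix $R$ and compute its determinantal divisors $D_k$. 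The key point is that the $k\times k$ minors of $R$ that use its single full row contribute $\pm m^{k-1}\prod_{i\in T}d_i$ over all $k$-subsets $T$, whose gcd is $m^{k-1}D_k(\mathrm{diag}(d_i))=m^{k-1}s_1\cdots s_k$, while all other minors are multiples of these; thus $D_k(R)=m^{k-1}s_1\cdots s_k$, the invariant factors of $R$ are $s_1,ms_2,\dots,ms_{|V|}$, and $\coker(R)\cong\ZZ_{s_1}\oplus\bigoplus_{i\ge 2}\ZZ_{ms_i}$.

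Finally I would reassemble. Because scaling a matrix by $m$ multiplies its $k$-th determinantal divisor by $m^k$, the invariant factors of $\mathrm{diag}(md_i)$ are $ms_i$, so $\bigoplus_i(\ZZ_{md_i})^{m-2}\cong\bigoplus_i(\ZZ_{ms_i})^{m-2}$; adding the summand $\coker(R)$ computed above and regrouping yields exactly \eqref{diagonal-critical-group}. For the last assertion, when $m$ is coprime to every $d_i$ it is coprime to every $s_i$, so $\ZZ_{ms_i}\cong\ZZ_m\oplus\ZZ_{s_i}$ by the Chinese Remainder Theorem; substituting, counting the resulting copies of $\ZZ_m$, and using $\bigoplus_i\ZZ_{s_i}^{m-1}\cong\bigoplus_i\ZZ_{d_i}^{m-1}$ recovers the stated simplified form. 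I expect the determinantal-divisor bookkeeping in the third paragraph to be the only genuinely delicate step, as it is what forces the appearance of the invariant factors $s_i$ rather than the raw degrees $d_i$.
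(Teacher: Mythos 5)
Your proof is correct and follows essentially the same route as the paper's: the same entrywise computation giving $L(HG)=\mathrm{diag}(md_1,\ldots,md_{|V|})$, the same identification of $\im\overline{\del}_{HG}$ as the index-$m$ sublattice of $\redZH^V$ cut out by the coefficient-sum-$\equiv 0 \bmod m$ condition (the paper's $\Lambda$, your $\sigma^{-1}(J)$), and the same determinantal-divisor computation on an arrowhead presentation matrix producing the invariant factors $s_1, ms_2,\ldots$ (this is exactly the paper's Lemma~\ref{numerical-lemma}). The differences are only organizational: you read off the image lattice directly from the columns of $\overline{\del}_{HG}$ rather than lifting to the connected cover $\widetilde{mG}$ and pushing down, and you split off the $(\ZZ_{md_i})^{m-2}$ summands by a unimodular change of coordinates so that the Smith-normal-form step runs on a $|V|\times|V|$ matrix, whereas the paper applies its lemma once to the full $(m-1)|V|$-dimensional lattice with each degree repeated $m-1$ times.
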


\begin{proof}
For the description of the entries of $L(HG)$, first note that 
$L(HG)$ is diagonal since a 
pair of vertices $u,v$ with $u \neq v$ having $d$ edges 
between them will have $(u,v)$ entry in $L(HG)$ given by 
$$
d \sum_{h \in H} (+1)(-\overline{T}_h ) = -d \sum_{h \in H} \overline{T}_h = -d \cdot \overline{c}= 0 \quad \text{ in } \redZH(:=\ZZ H /\ZZ c).
$$
Thus we only need to compute the diagonal $(v,v)$ entry corresponding to each 
vertex $v$ in $V$.
If $v$ has $\ell$ loops attached and is incident to $d$ nonloop edges,
then this $(v,v)$ entry in
$L(HG):=\overline{\del}_{G_\beta} \overline{\del}_{G_\beta}^*$ is given by the sum 
$$
\begin{aligned}
&d \sum_{h \in H} (-\overline{T}_h) (-\overline{T}_{h^{-1}}) +
\ell \sum_{h \in H} (1-\overline{T}_h)(1-\overline{T}_{h^{-1}})\\
&=d \sum_{h \in H} 1 +
\ell \sum_{h \in H} \left( 2-(\overline{T}_h+\overline{T}_{h^{-1}}) \right)\\
 &= dm+2\ell m\\
&= m d_v.
\end{aligned}
$$

For the assertions about $K(HG)$, we use its presentation from
\eqref{voltage-graph-critical-group-via-Laplacian}
as 
$
K(HG)=\im \overline{\del}_{HG} /\im L(HG),
$
and start by describing $\im \overline{\del}_{HG}$
more explicitly. 
Note that $\overline{\del}_{HG}$ fits into this
commutative square, where the vertical maps are both quotient maps:
$$
\begin{CD}
(\ZZ H)^{E} @>\del_{H G}>> (\ZZ H)^V \\
@V\kappa_{E}VV  @VV\kappa_{V}V \\
(\overline{\ZZ H})^{E} @>\overline{\del}_{H G}>> (\overline{\ZZ H})^V. \\
\end{CD}
$$
Therefore  
$
\im \overline{\del}_{HG}=\kappa_V(\im \del_{HG}),
$
and it helps to first analyze $\im \del_{HG}$.
If $\widetilde{mG}=(\widetilde{V},\widetilde{E})$ denotes
the total space in the covering $\widetilde{mG}\rightarrow mG$,
then one easily checks (or see Proposition~\ref{mGstructure} below)
that connectivity of $G$ implies connectivity of 
$\widetilde{mG}$.  Hence $\im \del_{H G}$ is the sublattice 
$\ZZ^{\widetilde{V}}_{=0}$ 
of $\ZZ^{\widetilde{V}}$ 
where the coordinates sum to zero. 
Under the isomorphism of $\ZZ^{\widetilde{V}}$ with 
$(\ZZ H)^{V}$ in Proposition \ref{group-algebra-covering-identifications}, 
this sublattice $\im\del_{H G}$ corresponds to
the sublattice of $(\ZZ H)^{V}$ consisting of those 
elements $x=(x_1,\ldots,x_{|V|})$ whose sum of coordinates 
$x_1 + \cdots + x_{|V|}=\sum_{h \in H} a_h T_h$,
when considered as an element of $\ZZ H$, satisfies $\sum_{h \in H} a_h=0$.
Then $\im \overline{\del}_{HG}$ is the image
of this sublattice $\im\del_{H G}$ of $(\ZZ H)^{V}$
under the quotient map $\kappa_{V}$ 
that mods out by multiples of $c:=\sum_{h \in H} T_h$.
Since $c$ has its sum of coordinates equal to $|H|=m$,
one concludes that $\im \overline{\del}_{HG}$
is the sublattice $\Lambda$ of $(\overline{\ZZ H})^V$ consisting
of the elements $x=(x_1,\ldots,x_{|V|})$ whose sum of coordinates 
$x_1 + \cdots + x_{|V|}=\sum_{h \in H} a_h \overline{T}_h$,
when considered as an element of $\overline{\ZZ H}$, satisfies
$\sum_{h \in H} a_h \equiv 0 \bmod{m}$.


We next compute that
\begin{equation}
\label{Lambda-mod-diagonal-expression}
\begin{aligned}
K(HG) &= \im \overline{\del}_{HG}/\im L(HG)  \\
&= \Lambda \left/ \bigoplus_{i=1}^{|V|} md_i \redZH \right.\\
&= \ZZ^{(m-1)|V|}_{\equiv 0 \bmod{m}} \left/ \bigoplus_{i=1}^{|V|} md_i \ZZ^{m-1} \right.
\end{aligned}
\end{equation}
in which $\ZZ^n_{\equiv 0 \bmod{m}}$ denotes the sublattice of 
$\ZZ^n$ where the sum of
coordinates is $0$ modulo $m$.  Then the last expression in 
\eqref{Lambda-mod-diagonal-expression}
is isomorphic to the right-side of \eqref{diagonal-critical-group}
via Lemma~\ref{numerical-lemma} below.

For the last assertion of the proposition, when $m$ happens to be relatively prime to all the
vertex degrees $d_i$, it is also relatively prime to all of the $s_i$,
and hence one has
$$
\begin{aligned}
K(HG) 
&\cong \ZZ_{s_1} \oplus \ZZ_{ms_1}^{m-2} \oplus
\bigoplus_{i=2}^{|V|} \ZZ_{ms_i}^{m-1} \\
&\cong \ZZ_m^{(m-1)|V|-1} \oplus \bigoplus_{i=1}^{|V|} \ZZ_{s_i}^{m-1} \\
&\cong \ZZ_m^{(m-1)|V|-1} \oplus \bigoplus_{i=1}^{|V|} \ZZ_{d_i}^{m-1}. 
\qedhere
\end{aligned}
$$
\end{proof}

The following numerical lemma was used in the preceding proof.
\begin{lemma}
\label{numerical-lemma}
Given positive integers $d_1,\ldots,d_n$, if one uniquely expresses 
$
\bigoplus_{i=1}^{n} \ZZ_{d_i} \cong \bigoplus_{i=1}^{n} \ZZ_{s_i}
$
for positive integers $s_1,\ldots,s_{|V|}$ with $s_i$ dividing $s_{i+1}$, then
$$
\ZZ^n_{\equiv 0 \bmod {m}} \left/ \bigoplus_{i=1}^n md_i \ZZ \right.
\quad 
\cong 
\quad
\ZZ_{s_1} \oplus \bigoplus_{i=2}^n \ZZ_{ms_i} .
$$
\end{lemma}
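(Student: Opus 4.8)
The plan is to identify the left-hand quotient with the kernel of an explicit surjection of finite abelian groups, and then compute that kernel one prime at a time.

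First I would set $\Lambda := \ZZ^n_{\equiv 0 \bmod m}$ and $\Lambda' := \bigoplus_{i=1}^n md_i\ZZ$, recording the inclusions $\Lambda' \subseteq \Lambda \subseteq \ZZ^n$ (with $[\ZZ^n:\Lambda]=m$, since the $d_i$ are positive so $\Lambda'$ has full rank). The coordinate-sum map $\sigma\colon \ZZ^n \to \ZZ_m$, $(x_1,\ldots,x_n)\mapsto \sum_i x_i \bmod m$, kills each generator $md_ie_i$ of $\Lambda'$, so it descends to a surjection $\phi\colon \bigoplus_i \ZZ_{md_i} = \ZZ^n/\Lambda' \twoheadrightarrow \ZZ_m$, and the third isomorphism theorem gives $\Lambda/\Lambda' = \ker\phi$. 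Thus the lemma reduces to the claim that $\ker\phi \cong \ZZ_{s_1}\oplus\bigoplus_{i=2}^n \ZZ_{ms_i}$, where concretely $\phi$ is the sum of the natural surjections $\ZZ_{md_i}\twoheadrightarrow\ZZ_m$.

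Next I would decompose $\phi=\bigoplus_p\phi_p$ into its $p$-primary components and compute each $\ker\phi_p$ separately. Fixing a prime $p$, write $a:=v_p(m)$ and let $f_1\le\cdots\le f_n$ be the valuations $\{v_p(d_i)\}$ listed in increasing order; because the $s_i$ are the invariant factors of $\bigoplus_i\ZZ_{d_i}$, these sorted valuations are exactly $v_p(s_1)\le\cdots\le v_p(s_n)$. After permuting the summands (harmless, as $\phi_p$ is symmetric in them), $\phi_p$ is the sum of the natural maps $\bigoplus_i \ZZ_{p^{a+f_i}} \to \ZZ_{p^a}$. The heart of the argument is then to realize $\ker\phi_p = \Lambda_p/D\ZZ^n$, where $\Lambda_p := \{x : \sum_i x_i \equiv 0 \bmod p^a\}$ and $D := \mathrm{diag}(p^{a+f_1},\ldots,p^{a+f_n})$, to choose the $\ZZ$-basis $w_1 := p^a e_1$, $w_i := e_i - e_1$ (for $i\ge2$) of $\Lambda_p$, and to rewrite the generators $p^{a+f_i}e_i$ of $D\ZZ^n$ in this basis as $p^{f_1}w_1$ (for $i=1$) and $p^{f_i}w_1 + p^{a+f_i}w_i$ (for $i\ge2$). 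Since $f_1$ is the minimum, $p^{f_1}\mid p^{f_i}$, so integer row operations clear the first column and bring the relation matrix to the diagonal form $\mathrm{diag}(p^{f_1},p^{a+f_2},\ldots,p^{a+f_n})$; hence $\ker\phi_p \cong \ZZ_{p^{f_1}} \oplus \bigoplus_{i=2}^n \ZZ_{p^{a+f_i}}$. Reassembling, this is precisely the $p$-primary part of $\ZZ_{s_1}\oplus\bigoplus_{i=2}^n\ZZ_{ms_i}$, because $v_p(s_1)=f_1$ and $v_p(ms_i)=a+f_i$; taking the direct sum over all primes yields the claim.

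I expect the $p$-local kernel computation to be the main obstacle. The delicate points are verifying that $\{w_i\}$ is genuinely a $\ZZ$-basis of $\Lambda_p$ (so the row reduction is invertible over $\ZZ$ and preserves the cokernel), and tracking \emph{which} cyclic factor loses the factor $p^{a}$: it is the minimal valuation $f_1$ that gets "demoted" from $p^{a+f_1}$ to $p^{f_1}$, which is exactly why the sorting into the invariant factors $s_i$ is forced and why a single $\ZZ_{s_1}$ summand (rather than $\ZZ_{ms_1}$) appears. Everything else is routine bookkeeping with the third isomorphism theorem and primary decomposition.
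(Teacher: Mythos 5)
Your proof is correct, and it takes a genuinely different route from the paper's. The paper works entirely over $\ZZ$: it chooses the basis $\delta_1 = m\epsilon_1$, $\delta_i = \epsilon_i - \epsilon_{i-1}$ ($i\ge 2$) of $\ZZ^n_{\equiv 0 \bmod m}$, expresses the generators $md_i\epsilon_i$ in that basis to get an explicit upper-triangular relation matrix, and then reads off its Smith normal form from the gcds of its $k\times k$ minors, invoking the identity $\gcd\{d_{i_1}\cdots d_{i_k}\} = s_1 s_2 \cdots s_k$. You instead first repackage the quotient as the kernel of the coordinate-sum surjection $\bigoplus_i \ZZ_{md_i} \twoheadrightarrow \ZZ_m$ (third isomorphism theorem), which lets you split the computation into $p$-primary pieces; at a fixed prime the valuations are totally ordered, so after choosing the analogous basis ($w_1 = p^a e_1$, $w_i = e_i - e_1$) the minimal-valuation generator divides all the others, and one round of unimodular row operations diagonalizes the relation matrix---no minor-gcd machinery needed. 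The two arguments lean on equivalent arithmetic facts: your statement that the sorted $p$-valuations of the $d_i$ are exactly the $v_p(s_i)$ is the prime-local form of the paper's gcd-of-$k$-fold-products identity. What yours buys is elementariness and transparency---it makes visible why exactly one summand gets ``demoted'' to $\ZZ_{s_1}=\ZZ_{\gcd(d_1,\ldots,d_n)}$; what the paper's buys is brevity, once the characterization of Smith normal form entries via gcds of minors is taken as known. The checkpoints you flag (that $\{w_i\}$ is genuinely a $\ZZ$-basis of $\Lambda_p$, and that the row reduction is invertible over $\ZZ$) do hold exactly as you describe.
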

\begin{proof}
If $\ZZ^n$ has standard basis $\epsilon_1,\ldots,\epsilon_n$,
then the sublattice $\ZZ^n_{\equiv 0 \bmod {m}}$ has a $\ZZ$-basis
given by $\delta_1=m \epsilon_1$ and $\delta_i=\epsilon_i - \epsilon_{i-1}$ for
$i=2,3,\ldots,n$.  With respect to this basis, one can express
$
m \epsilon_i = \delta_1 + m(\delta_2+\delta_3+\cdots+\delta_i)
$
for $i=1,2,\ldots,n$.  Hence 
$
\ZZ^n_{\equiv 0 \bmod {m}} \left/ \bigoplus_{i=1}^n md_i \ZZ \right.
$
is isomorphic to the cokernel of this matrix $A$ in $\ZZ^{n \times n}$:
$$
A=\left[
\begin{matrix}
d_1   & d_2   & d_3 & \cdots & d_n \\
0     &md_2   &md_3 & \cdots &md_n \\
0     &0      &md_3 & \cdots &md_n \\
\vdots&\vdots &     &        &\vdots\\
0     &0      &0    &  \cdots&md_n\\
\end{matrix}
\right]=
\left[
\begin{matrix}
1     & 0   & 0 & \cdots & 0  \\
0     &m   & 0 & \cdots &0  \\
0     &0      &m & \cdots &0  \\
\vdots&\vdots &     &        &\vdots\\
0     &0      &0    &  \cdots&m\\
\end{matrix}
\right]
\left[
\begin{matrix}
d_1   & d_2   & d_3 & \cdots & d_n \\
0     &d_2   &d_3 & \cdots &d_n \\
0     &0      &d_3 & \cdots &d_n \\
\vdots&\vdots &     &        &\vdots\\
0     &0      &0    &  \cdots&d_n\\
\end{matrix}
\right].
$$
One can easily check that for $k=1,2,\ldots,n$,
the gcd of the set of all $k \times k$ minor subdeterminants
of $A$ is $m^{k-1}$ times the gcd of all products $d_{i_1} \cdots d_{i_k}$
with $1 \leq i_1 < ... < i_k \leq n$, and hence equals $m^{k-1} s_1 s_2 \cdots s_k$.
This implies the Smith normal form entries for $A$ are
$s_1, ms_2, ms_3,\ldots, ms_n$, proving the lemma.
\end{proof}

Having determined the critical group $K(HG)$ for 
these special voltage graphs $HG$,
we now wish to consider their associated regular covering of
the underlying graph $mG$.  

\begin{proposition}
\label{mGstructure}
For $m \geq 2$, the total space $\widetilde{mG}=(\tilde{V},\tilde{E})$ 
in the graph covering 
$\widetilde{mG} \rightarrow mG$ associated to $HG$ has
the following description as an undirected graph, which depends
only on $m=|H|$, and not on the structure of $H$ as a group.

The vertex set $\tilde{V}$ contains $m$ 
vertices $\{v_h\}_{h \in H}$ for each vertex $v$ in $V$.

The edge set $\tilde{E}$ contains for each
nonloop edge $e=(u,v)$ of $E$, a copy of the
complete bipartite graph $K_{m,m}$ on 
the bipartitioned vertex set 
   $
    \{u_h\}_{h \in H} \sqcup \{v_h\}_{h \in H}.
   $
For each loop edge on a vertex $v$ in $V$,
the edge set $\tilde{E}$ also contains
a loop on each vertex in $\{v_h\}_{h \in H}$, as well as
two copies
of the complete graph $K_m$ on vertex set $\{v_h\}_{h \in H}$.
\end{proposition}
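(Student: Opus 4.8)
The plan is to read off the total space directly from the explicit ``voltage assignment to covering'' construction recalled in Section~\ref{regular-coverings-section}, namely
$$
\tilde{V} = \{v_h\}_{v\in V,\, h\in H}, \qquad
\tilde{E} = \{f_h = (x_h, y_{\beta(f)h})\}_{f=(x,y)\in mE,\, h\in H},
$$
applied to the voltage graph $HG$, whose underlying graph is $mG$ and whose $m$ copies $e^{(g)}$ of a given edge $e$ of $G$ carry the $m$ distinct voltages $\beta(e^{(g)})=g$ as $g$ ranges over $H$. The vertex description is then immediate, and the edge description splits into the nonloop and loop cases, each a short counting argument.

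First I would treat a nonloop edge $e=(u,v)$ of $G$. Its $m$ copies $e^{(g)}$ lift to the edges $e^{(g)}_h = (u_h, v_{gh})$ as $g,h$ range over $H$. Fixing the source label $h$, the target label $gh$ runs bijectively over all of $H$ as $g$ does, since right multiplication by $h$ is a bijection; hence there is exactly one lifted edge joining $u_h$ to each $v_{h'}$. Summing over $h$, this is precisely a copy of $K_{m,m}$ on the bipartition $\{u_h\}_{h\in H}\sqcup\{v_h\}_{h\in H}$, and no feature of the group law beyond bijectivity of translation was used.

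The loop case is where I expect to have to be careful, and it is the crux of the ``two copies of $K_m$'' assertion. For a loop $e=(v,v)$ of $G$, the lifts are $e^{(g)}_h=(v_h,v_{gh})$. When $g=\one$ these are the loops $(v_h,v_h)$, giving exactly one loop at each of the $m$ vertices $v_h$. When $g\neq\one$ they are genuine nonloop edges, and I would count, for a fixed unordered pair $\{v_h,v_{h'}\}$ with $h\neq h'$, how many lifts join them: the condition that $\{v_k,v_{gk}\}$ equal $\{v_h,v_{h'}\}$ forces either $(k,gk)=(h,h')$, giving $g=h'h^{-1}$, or $(k,gk)=(h',h)$, giving $g=h(h')^{-1}$; since $h\neq h'$ both solutions have $g\neq\one$ and are distinct, so each such pair receives exactly two lifted edges. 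Thus the non-identity lifts assemble into exactly two copies of $K_m$ on $\{v_h\}_{h\in H}$, completing the description.

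Finally I would note that every count above---one edge per bipartite pair, one loop per vertex, two edges per distinct pair---was obtained purely from the bijectivity of translation in $H$ and never used the group structure otherwise, so the resulting undirected multigraph depends only on $m=|H|$ and not on the isomorphism type of $H$, as claimed. The only point requiring genuine attention is the loop case, where one must remember that each undirected lifted edge arises from a \emph{single} directed lift and therefore pair up the two solutions for $g$ correctly, rather than double-counting one edge.
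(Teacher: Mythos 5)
Your proposal is correct and follows essentially the same route as the paper: both read the cover off the explicit voltage-to-covering construction, identify the lifts of the $m$ copies of a nonloop edge with a $K_{m,m}$, and in the loop case split off the voltage-$\one$ lifts as loops and count, for each unordered pair $\{v_{h_1},v_{h_2}\}$, exactly the two directed lifts coming from $g=h_2h_1^{-1}$ at fiber index $h_1$ and $g=h_1h_2^{-1}$ at fiber index $h_2$. Your closing remark about pairing the two solutions for $g$ (so that no edge is double-counted even when $h_2h_1^{-1}=h_1h_2^{-1}$) is exactly the bookkeeping the paper's proof performs with its ordered pairs $(h,h')$ ranging over $H\times(H\setminus\{\one\})$.
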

\begin{proof}
An edge $(u_{h_1},v_{h h_1})$ within a copy of the bipartite graph
$K_{m,m}$ corresponding to an edge $e$ represents 
in the regular cover the edge labelled $e_{h_1}^{(h)}$, lying above the copy   
$e^{(h)}$ of $e$ in $mG$ that has been assigned voltage $\beta(e^{(h)})=h$.

For each loop $e$ at a vertex $v$ in $V$,
the loop on $v_h$ represents the regular cover edge
labelled $e^{(\one)}_h$ lying above the copy $e^{(\one)}$ of $e$ in
$mG$ that has been assigned voltage $\beta(e^{(\one)})=\one$.
The two copies of the complete graphs $K_m$ on $\{v_h\}_{h \in H}$
come from the regular cover edges  labelled 
$e^{(h)}_{h'}$ as the ordered pairs $(h,h')$ run through
$H \times (H \setminus \{\one\})$:  for $h_1 \neq h_2$ in $H$ one will
obtain the undirected edge $\{v_{h_1},v_{h_2}\}$ twice,
once from taking $h=h_1$ and $h'=h_2 h_1^{-1}$, and once
from taking $h=h_2$ and $h'=h_1 h_2^{-1}$.
\end{proof}

\begin{corollary}
\label{stacked-bipartite-corollary}
Fix $m \geq 2$ a positive integer, and let 
$G=(V,E)$ be a connected multigraph with no loops, with critical group 
$
K(G) \cong \bigoplus_{i=1}^{|V|-1} \ZZ_{k_i}.
$
Assume $G$ has vertex degrees 
$d_1,\ldots,d_{|V|}$, all relatively prime to $m$.

Then the short exact sequence of Theorem~\ref{regular-covering-exact-sequence} becomes
\begin{equation}
\label{diagonal-Laplacian-short-exact}
\begin{array}{rcccccccl}
0 &\rightarrow & K(mG) & \rightarrow & K(\widetilde{mG}) & \rightarrow & K(HG) & \rightarrow &0. \\
&              & \Vert&             &              &             & \Vert  & & \\
              &             &  & &\\
 &             &\bigoplus_{i=1}^{|V|-1} \ZZ_{m k_i}&             &             
              &             & \ZZ_m^{(m-1)|V|-1} & &\\
           & & & & &             & \oplus  & &\\
           & & & & &             & \bigoplus_{i=1}^{|V|} \ZZ_{d_i}^{m-1}  & &
\end{array}
\end{equation}
If one further assumes that $m$ is relatively prime to the determinant of the adjacency matrix of $G$, then
\begin{equation}
\label{diagonal-Laplacian-short-exact-with-further-assumption}
K(\widetilde{mG}) 
\quad \cong \quad
\ZZ_m^{(m-2)|V|} 
  \oplus \bigoplus_{i=1}^{|V|-1} \ZZ_{m^2 k_i}
  \oplus \bigoplus_{i=1}^{|V|} \ZZ_{d_i}^{m-1}.
\end{equation}
\end{corollary}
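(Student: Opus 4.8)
The plan is to read the short exact sequence straight off Theorem~\ref{regular-covering-exact-sequence}, applied to the voltage graph $HG$ whose associated regular covering is $\widetilde{mG}\to mG$, and then to identify its three terms. The right-hand term is immediate from the coprimality conclusion of Proposition~\ref{diagonal-voltage-Laplacian-prop}: since $G$ is loopless its degree sequence is $(d_1,\dots,d_{|V|})$, and as each $d_i$ is prime to $m$ one gets $K(HG)\cong \ZZ_m^{(m-1)|V|-1}\oplus\bigoplus_{i} \ZZ_{d_i}^{m-1}$. For the left-hand term I would observe that $mG$ has exactly the incidence columns of $G$, each repeated $m$ times, so $\im\del_{mG}=\im\del_G=\ZZ^V_{=0}$ (the sum-zero lattice, using connectivity) while $L(mG)=\del_{mG}\del_{mG}^t=m\,L(G)$. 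Hence $K(mG)=\ZZ^V_{=0}/m\,L(G)\ZZ^V$; picking a basis of $\ZZ^V_{=0}$ in which $L(G)\ZZ^V$ is diagonal with entries $k_1,\dots,k_{|V|-1}$ (the invariant factors realizing $K(G)=\bigoplus_i\ZZ_{k_i}$) gives $K(mG)\cong\bigoplus_i\ZZ_{mk_i}$ at once. Together these three identifications are precisely the sequence \eqref{diagonal-Laplacian-short-exact}.

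To upgrade this to the explicit isomorphism \eqref{diagonal-Laplacian-short-exact-with-further-assumption} under the extra hypothesis $\gcd(m,\det A)=1$, I would argue one prime at a time, since a finite abelian group is determined by its $p$-primary pieces. For primes $p\nmid m$ the sequence splits by Theorem~\ref{regular-covering-exact-sequence}, and the $p$-parts of $K(mG)\oplus K(HG)$ already agree with those of the claimed answer (both the $\ZZ_m$ summands and the extra factor $m^2$ are invisible at such $p$). Everything therefore reduces to the primes $p\mid m$, where I would compute $K(\widetilde{mG})$ directly as the torsion of $\coker L(\widetilde{mG})$, using that $\widetilde{mG}$ is connected (Proposition~\ref{mGstructure}), so that $\coker L(\widetilde{mG})\otimes\ZZ_p\cong \ZZ_p\oplus(\text{$p$-part of }K(\widetilde{mG}))$. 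The key input is the Kronecker shape of the Laplacian from the group-algebra description: the computation in the proof of Proposition~\ref{diagonal-voltage-Laplacian-prop}, now also recording the off-diagonal entries (which equal $-d_{uv}\,c$), shows that under $\ZZ^{\tilde V}\cong(\ZZ H)^V$ and the regular representation sending $1\mapsto I$ and $c\mapsto J$ one has $L(\widetilde{mG})=mD\otimes I-A\otimes J$, where $D=\mathrm{diag}(d_i)$, $A$ is the adjacency matrix of $G$, and $I,J$ are the $m\times m$ identity and all-ones matrices on the fiber factor.

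Now fix $p\mid m$ with $p^a\,\|\,m$. Since $p\nmid d_i$ and $p\nmid\det A$, both $D$ and $A$ are invertible over $\ZZ_p$. I would change fiber-basis by the unimodular substitution $g_i=e_i-e_m$ ($i<m$), $g_m=e_m$, which conjugates $J$ into the matrix with a single nonzero column (ones in rows $1,\dots,m-1$ and $m$ in row $m$) and leaves $I$ fixed; this turns $L(\widetilde{mG})$ over $\ZZ_p$ into a block matrix whose first $m-1$ diagonal blocks are $mD$, whose last diagonal block is $mL(G)$, and whose last block-column carries $-A$ in its top $m-1$ block-rows. Because $D$ is a unit, each $mD$-block contributes $\coker(mD)\cong\ZZ_{p^a}^{|V|}$, and the difference trick $W_k=Z_k-Z_{m-1}$ on block-rows $1,\dots,m-2$ is invariant under the remaining relations, splitting off a summand $\ZZ_{p^a}^{(m-2)|V|}$. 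What is left is the cokernel of $\left(\begin{smallmatrix} mI & -A\\ 0 & mL(G)\end{smallmatrix}\right)$ over $\ZZ_p$; clearing with the invertible block $A$ and then with integer column/row operations reduces it to $\coker\!\left(m^2 L(G)A^{-1}\right)=\coker\!\left(m^2 L(G)\right)$, whose nonzero invariant factors are $m^2k_1,\dots,m^2k_{|V|-1}$ together with one free $\ZZ_p$ (the connectivity piece). Discarding the free part yields exactly the $p$-part $\ZZ_{p^a}^{(m-2)|V|}\oplus\bigoplus_i\ZZ_{(m^2k_i)_p}$ of \eqref{diagonal-Laplacian-short-exact-with-further-assumption}, and assembling over all $p$ completes the proof.

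The \emph{main obstacle} is exactly this analysis at the primes $p\mid m$: the rational eigenbasis diagonalizing $J$ (the discrete Fourier transform) is not integral there, so the sequence genuinely fails to split and one must track the true $\ZZ_p$-module extension rather than decompose it. The heart of the argument is the explicit integral change of fiber-basis together with the difference trick, which separate the $(m-2)|V|$ factors $\ZZ_{p^a}$ from the part governed by $\coker(m^2L(G))$; the delicate point to verify is that this reduction really produces the factor $m^2$ (and not merely a single $m$), which is precisely what records the nonsplit extension merging $\ZZ_{mk_i}$ from $K(mG)$ with a $\ZZ_m$ from $K(HG)$ into $\ZZ_{m^2k_i}$.
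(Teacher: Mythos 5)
Your identification of the three terms of \eqref{diagonal-Laplacian-short-exact} is exactly the paper's: $K(mG)\cong\bigoplus_i\ZZ_{mk_i}$ because $L(mG)=mL(G)$, and $K(HG)$ from the coprimality case of Proposition~\ref{diagonal-voltage-Laplacian-prop}; likewise both arguments dispose of the primes $p\nmid m$ by the splitting in Theorem~\ref{regular-covering-exact-sequence}. Where you genuinely diverge is at the primes $p\mid m$. The paper never computes $\Syl_p K(\widetilde{mG})$ directly: it observes that the $|V|\times|V|$ submatrix of $L(\widetilde{mG})$ with rows $\{v_{h_1}\}_{v\in V}$ and columns $\{v_{h_2}\}_{v\in V}$ (for fixed $h_1\neq h_2$) equals $-A$, so invertibility of $A$ mod $p$ gives $\mathrm{rank}_{\mathbb{F}_p}L(\widetilde{mG})\geq |V|$ and hence bounds the number of generators of $\Syl_p K(\widetilde{mG})$ by $(m-1)|V|-1$; since this equals the number of generators of the quotient term $\ZZ_{p^a}^{(m-1)|V|-1}$ in the $p$-primary sequence, the extension is rigid --- one is forced to have (kernel) $=p^a\cdot$(middle group), which pins down the middle group. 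You instead compute $\Syl_p K(\widetilde{mG})$ head-on from the Kronecker form $L(\widetilde{mG})=mD\otimes I-A\otimes J$ by a localized Smith-type reduction, bypassing the exact sequence entirely at these primes; your reduction of the residual block to $\coker\bigl(m^2L(G)A^{-1}\bigr)=\coker\bigl(m^2L(G)\bigr)$ over $\ZZ_{(p)}$ is correct and makes explicit where the factor $m^2$ is created. The trade-off: the paper's route is shorter but silently uses the group-theoretic fact that a surjection $S\twoheadrightarrow(\ZZ_{p^a})^N$ from a finite abelian $p$-group $S$ with exactly $N$ generators forces the kernel to be $p^aS$ (hence determines $S$ from the kernel); your route is longer but elementary, constructive, and self-contained.

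One repair is needed in your middle step. After the fiber change of basis, the single mixed relation reads $\sum_{j<m}AZ_j=mL(G)Z_m$, and this \emph{does} involve your difference variables $W_k=Z_k-Z_{m-1}$, so they are not literally ``invariant under the remaining relations.'' The splitting you want is still true, but you should either take the partial sum $S=\sum_{j<m}Z_j$ as the companion generator --- then $(Z_1,\dots,Z_{m-2},S,Z_m)$ is a unimodular change over $\ZZ$, the relations become $p^aZ_k=0$ for $k\leq m-2$, $p^aS=0$, and $AS=mL(G)Z_m$, so $Z_1,\dots,Z_{m-2}$ split off a summand $\ZZ_{p^a}^{(m-2)|V|}$ --- or keep the $W_k$ and observe that the change of generators $(Z_1,\dots,Z_{m-1})\mapsto(W_1,\dots,W_{m-2},S)$ has determinant $m-1$, which is a unit in $\ZZ_{(p)}$ precisely because $p\mid m$. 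With that adjustment your argument goes through and yields the same answer as the paper.
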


\begin{proof}
The fact that
$
K(G) \cong \bigoplus_{i=1}^{|V|-1} \ZZ_{k_i}
$
 if and only if 
$
K(mG) \cong \bigoplus_{i=1}^{|V|-1} \ZZ_{m k_i}
$
is a well-known consequence of the 
presentation \eqref{K(G)-Kirchoff-presentation} of $K(G)$,
as one has this relation between Laplacians matrices: $L(mG)=mL(G)$.
The description of $K(HG)$ comes 
from Proposition~\ref{diagonal-critical-group}.  
This explains the exact sequence \eqref{diagonal-Laplacian-short-exact}.

For the last assertion, we first consider primes $p$ that divide $m$, noting
that the $p$-primary part of the sequence \eqref{diagonal-Laplacian-short-exact} looks like
\begin{equation}
\label{p-primary-sequence}
0 \rightarrow 
   \Syl_p \left( \bigoplus_{i=1}^{|V|-1} \ZZ_{m k_i}  \right)
  \rightarrow 
    \Syl_p K(\widetilde{mG}) 
  \rightarrow 
    \Syl_p \left( \ZZ_m^{(m-1)|V|-1} \right) 
       \rightarrow 0 
\end{equation}
where recall that here $\Syl_p(A)$ denotes the {\it Sylow $p$-subgroup} or 
{\it $p$-primary component}
of a finite abelian group $A$.
Since $m \geq 2$, one can pick distinct 
elements $h_1 \neq h_2$ in $H$, and check that the
$|V| \times |V|$ submatrix of
the Laplacian $L(\widetilde{mG})$ with
rows indexed by $\{v_{h_1}\}_{v \in V}$ and columns indexed by
$\{v_{h_2}\}_{v \in V}$ is the negative of the adjacency matrix of $G$.
Therefore under the additional assumption that $m$ is relatively prime to 
the determinant of this adjacency matrix, 
for every prime $p$ dividing $m$,
the $p$-primary component $\Syl_p K(\widetilde{mG})$ appearing in the middle of
the sequence \eqref{p-primary-sequence} has its number of generators bounded by
$$
(|\tilde{V}|-1)-|V| = (m|V| - 1) -|V| = (m-1)|V|-1
$$
matching the exponent on $\ZZ_m$ in the 
right term in the sequence.  This then forces 
$$
\Syl_p K(\widetilde{mG}) 
=  \Syl_p \left( \bigoplus_{i=1}^{|V|-1} \ZZ_{m^2 k_i}
\oplus \ZZ_m^{(m-2)|V|} \right)
$$
By Proposition \ref{covering-splitting-proposition}, the sequence 
\eqref{diagonal-Laplacian-short-exact}
splits at $p$-primary components for
all the other primes $p$ that do {\it not} divide $m$.
Collating the various $p$-primary components then gives
the description \eqref{diagonal-Laplacian-short-exact-with-further-assumption} 
for $K(\widetilde{mG})$.
\end{proof}

We next apply Corollary~\ref{stacked-bipartite-corollary} in three 
families of examples.

\subsection{Example:  when $G$ is a path}

\begin{proposition}
\label{stacked-bipartite-paths}
Let $G=(V,E)$ be a path, with $|V|$ even, and let $m \geq 3$ be an odd integer.
Then the regular covering $\widetilde{mG} \rightarrow mG$ has
$$
K(\widetilde{mG}) =  
\ZZ_m^{(m-2)|V|}
\oplus
\ZZ_{m^2}^{|V|-1}
\oplus
\ZZ_2^{(m-1)(|V|-2)}.
$$
\end{proposition}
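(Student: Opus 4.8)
The plan is to apply Corollary~\ref{stacked-bipartite-corollary} directly, after checking that a path satisfies all of its hypotheses and then reading off the numerical data. First I would record that a path $G$ on $n:=|V|$ vertices is a tree, hence has a unique spanning tree and trivial critical group $K(G)=0$; in the notation of the corollary this means every $k_i=1$, so that the summand $\bigoplus_{i=1}^{|V|-1}\ZZ_{m^2 k_i}$ collapses to $\ZZ_{m^2}^{|V|-1}$. Next I would record the degree sequence of a path: the two endpoints have degree $1$ and the $|V|-2$ interior vertices have degree $2$. Since $m$ is odd, both $1$ and $2$ are relatively prime to $m$, so the standing hypothesis of Corollary~\ref{stacked-bipartite-corollary} that all vertex degrees are coprime to $m$ holds, and $G$ is connected and loopless as required.

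To invoke the stronger conclusion \eqref{diagonal-Laplacian-short-exact-with-further-assumption}, I must verify that $m$ is relatively prime to $\det A$, where $A$ is the adjacency matrix of the path; this is where the hypothesis that $|V|$ is \emph{even} enters. The matrix $A$ is the $n\times n$ tridiagonal matrix with zero diagonal and ones on the two off-diagonals, and Laplace expansion along the first row yields the recurrence $\det A_n=-\det A_{n-2}$ with $\det A_0=1$ and $\det A_1=0$. Hence $\det A_n=0$ for $n$ odd and $\det A_n=(-1)^{n/2}=\pm 1$ for $n$ even. Because $|V|$ is even, $\det A=\pm 1$ is coprime to every $m$, so the further assumption of the corollary is automatic.

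With all hypotheses in place, I would substitute into \eqref{diagonal-Laplacian-short-exact-with-further-assumption}. Using $k_i=1$ turns $\bigoplus_{i=1}^{|V|-1}\ZZ_{m^2 k_i}$ into $\ZZ_{m^2}^{|V|-1}$; in $\bigoplus_{i=1}^{|V|}\ZZ_{d_i}^{m-1}$ the two degree-$1$ endpoints contribute trivial factors $\ZZ_1^{m-1}$ while the $|V|-2$ degree-$2$ vertices contribute $\ZZ_2^{(m-1)(|V|-2)}$; and the leading summand $\ZZ_m^{(m-2)|V|}$ is unchanged. Collating these gives exactly
$$
K(\widetilde{mG}) \cong \ZZ_m^{(m-2)|V|} \oplus \ZZ_{m^2}^{|V|-1} \oplus \ZZ_2^{(m-1)(|V|-2)},
$$
as claimed.

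The only genuine computation, and hence the main obstacle, is the determinant evaluation $\det A_n=\pm 1$ for even $n$; everything else is bookkeeping of the degree sequence and the vanishing critical group of a tree. I would expect no difficulty beyond observing that the parity hypotheses on $m$ and on $|V|$ are precisely what make the two coprimality conditions of Corollary~\ref{stacked-bipartite-corollary} hold.
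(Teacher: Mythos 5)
Your proposal is correct and follows essentially the same route as the paper: verify the hypotheses of Corollary~\ref{stacked-bipartite-corollary} (degrees $1,2,\ldots,2,1$ coprime to odd $m$; adjacency determinant $\pm 1$ for $|V|$ even, computed by the same induction/recurrence; $K(G)$ trivial since a path is a tree, so all $k_i=1$), then read off \eqref{diagonal-Laplacian-short-exact-with-further-assumption}. The only difference is that you spell out the determinant recurrence $\det A_n=-\det A_{n-2}$ explicitly, which the paper leaves as "calculate by induction."
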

Here is a picture of $\widetilde{mG} \rightarrow mG$ 
in the case where $m=3$ and $|V|=4$. 

\begin{center}
\begin{tikzpicture}[scale=1, auto,swap,every loop/.style={}]
    \foreach \pos/\name in {{(0,1)/a_1},{(3,1)/b_1},{(6,1)/c_1},{(9,1)/d_1}}
        \node[vertex] (\name) at \pos {$\name$};
    \foreach \pos/\name in {{(0,2)/a_2},{(3,2)/b_2},{(6,2)/c_2},{(9,2)/d_2}}
        \node[vertex] (\name) at \pos {$\name$};
    \foreach \pos/\name in {{(0,3)/a_3},{(3,3)/b_3},{(6,3)/c_3},{(9,3)/d_3}}
        \node[vertex] (\name) at \pos {$\name$};
    \foreach \source/ \dest in {a_1/b_1,a_1/b_2,a_1/b_3,a_2/b_1,a_2/b_2,a_2/b_3,a_3/b_1,a_3/b_2,a_3/b_3,b_1/c_1,b_1/c_2,b_1/c_3,b_2/c_1,b_2/c_2,b_2/c_3,b_3/c_1,b_3/c_2,b_3/c_3,c_1/d_1,c_1/d_2,c_1/d_3,c_2/d_1,c_2/d_2,c_2/d_3,c_3/d_1,c_3/d_2,c_3/d_3}
        \path[edge] (\source) -- (\dest);

\draw[arrows=->,line width=1pt](4.5,.75)--(4.5,-.25);

    \foreach \pos/\name in {{(0,-1)/a},{(3,-1)/b},{(6,-1)/c},{(9,-1)/d}}
        \node[vertex] (\name) at \pos {$\name$};
    \foreach \source/ \dest in {a/b,b/c,c/d}
        \path[edge] (\source) -- (\dest);

    \foreach \source/ \dest in {a/b,b/c,c/d}
       \path (\source) edge [bend left] (\dest);

    \foreach \source/ \dest in {a/b,b/c,c/d}
       \path (\source) edge [bend right] (\dest);
\end{tikzpicture}
\end{center}

\begin{proof}
Check Corollary~\ref{stacked-bipartite-corollary} applies. 
The vertex degrees are $(d_1,\ldots,d_{|V|})=(1,2,2,\ldots,2,2,1)$,
all relatively prime to the odd number $m$.  
One can calculate by induction on $|V|$
that a path has the determinant of its adjacency matrix either
$0$ for $|V|$ odd, or $\pm 1$ for $|V|$ even, and hence
relatively prime to $m$ when $|V|$ is even.  
Having checked
that the corollary applies, one needs to know that
$K(G)$ is the trivial group (since $G$ is a tree), with
invariant factors $(k_1,\ldots,k_{|V|-1})=(1,1,\ldots,1)$.
\end{proof}

\subsection{Example:  when $G$ is a cycle}

\begin{proposition}
\label{stacked-bipartite-cycles}
Let $G=(V,E)$ be a cycle with $|V| \not\equiv 0 \bmod{4}$, 
and let $m \geq 3$ be an odd integer.
Then the regular covering $\widetilde{mG} \rightarrow mG$ has
$$
K(\widetilde{mG}) =  
\ZZ_m^{(m-2)|V|}
\oplus
\ZZ_{m^2|V|}
\oplus
\ZZ_{m^2}^{|V|-2} 
\oplus
\ZZ_2^{(m-1)|V|}.
$$
\end{proposition}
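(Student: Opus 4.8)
The plan is to invoke Corollary~\ref{stacked-bipartite-corollary} directly, exactly as in the path case Proposition~\ref{stacked-bipartite-paths}, so that the work reduces to verifying the two hypotheses of that corollary and then reading off its conclusion~\eqref{diagonal-Laplacian-short-exact-with-further-assumption}. First I would record the elementary data of $G = C_n$ with $n = |V|$: it is a connected loopless multigraph, every vertex has degree $d_i = 2$, and its critical group is the well-known $K(C_n) \cong \ZZ_n$. Since $m$ is odd, each $d_i = 2$ is coprime to $m$, so the basic hypothesis of the corollary holds; writing $K(G)$ with the divisibility convention $k_i \mid k_{i+1}$, the invariant factors are $(k_1,\ldots,k_{|V|-1}) = (1,\ldots,1,n)$.

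The substantive step is checking the extra hypothesis that $m$ is relatively prime to $\det A(G)$, where $A(G)$ is the adjacency matrix of the cycle. I would compute this determinant through its eigenvalues $2\cos(2\pi k/n)$, $k=0,\ldots,n-1$, or equivalently via $\det A(C_n) = \omega^{-n(n-1)/2}\prod_{k=0}^{n-1}(\omega^{2k}+1)$ with $\omega=e^{2\pi i/n}$, reducing the product to $\prod_{\zeta^{M}=1}(1+\zeta)$ for the appropriate root-of-unity order $M$. The outcome is that $\det A(C_n) = 2$ when $n$ is odd, $\det A(C_n) = -4$ when $n \equiv 2 \bmod 4$, and $\det A(C_n) = 0$ when $n \equiv 0 \bmod 4$. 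This is precisely where the hypothesis $|V| \not\equiv 0 \bmod 4$ is used: it guarantees that $\det A(C_n)$ is a nonzero power of $2$, which is automatically coprime to the odd integer $m$. I expect this determinant computation---together with the observation that excluding $n \equiv 0 \bmod 4$ is exactly what makes the determinant a $\pm$ power of two---to be the main obstacle, though it is only a short circulant calculation.

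With both hypotheses verified, I would substitute into~\eqref{diagonal-Laplacian-short-exact-with-further-assumption}. The summand $\bigoplus_{i=1}^{|V|}\ZZ_{d_i}^{m-1}$ becomes $\ZZ_2^{(m-1)|V|}$ since all $d_i = 2$, and the summand $\bigoplus_{i=1}^{|V|-1}\ZZ_{m^2 k_i}$ becomes $\ZZ_{m^2}^{|V|-2} \oplus \ZZ_{m^2|V|}$ from the invariant factors $(1,\ldots,1,n)$. Collecting these with the $\ZZ_m^{(m-2)|V|}$ term yields
$$
K(\widetilde{mG}) \cong \ZZ_m^{(m-2)|V|} \oplus \ZZ_{m^2|V|} \oplus \ZZ_{m^2}^{|V|-2} \oplus \ZZ_2^{(m-1)|V|},
$$
which is exactly the claimed formula.
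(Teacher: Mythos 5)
Your proposal is correct and follows essentially the same route as the paper: verify the two hypotheses of Corollary~\ref{stacked-bipartite-corollary} (degrees all equal to $2$, hence coprime to the odd $m$; adjacency determinant of the cycle equal to $\pm 2$ or $-4$ when $|V| \not\equiv 0 \bmod 4$, hence coprime to $m$), recall $K(C_n) \cong \ZZ_{|V|}$ with invariant factors $(1,\ldots,1,|V|)$, and substitute into \eqref{diagonal-Laplacian-short-exact-with-further-assumption}. The only cosmetic difference is that you compute $\det A(C_n)$ via circulant eigenvalues, whereas the paper cites an induction on $|V|$ (or Abdollahi); both give the same case analysis, and the hypothesis $|V| \not\equiv 0 \bmod 4$ plays the identical role in each.
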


Note that in the previous figure, gluing together vertices $a,d$ and gluing
$a_i, d_i$ to each other for $i=1,2,3$, gives the picture for $m=3=|V|$.
\begin{proof}
Again check Corollary~\ref{stacked-bipartite-corollary} applies.
The vertex degrees $(d_1,\ldots,d_{|V|})=(2,2,\ldots,2)$,
relatively prime to the odd number $m$.
One can calculate by induction on $|V|$
(or see \cite[Prop. 2.4]{Abdollahi})
that a cycle has the determinant of its adjacency matrix either
$$
\begin{cases}
0 &\text{ if } |V| \equiv 0 \bmod{4}, \\
-4 & \text{ if } |V| \equiv 2 \bmod{4}, \\
\pm 2 & \text{ if } |V| \equiv 1,3 \bmod{4},
\end{cases}
$$
relatively prime to the odd number $m$ 
if $|V| \not\equiv 0 \bmod{4}$.
Having checked
that the corollary applies, one needs the
well-known fact that $K(G)=\ZZ_{|V|}$, with invariant factors 
$(k_1,\ldots,k_{|V|-1})=(1,1,\ldots,1,|V|)$.
\end{proof}

\subsection{Example:  when $G$ is a complete graph}

\begin{proposition}
\label{stacked-bipartite-completes}
Let $G=(V,E)$ be a complete graph (without loops) having $|V|-1$ relatively prime to the positive number
$m \geq 2$.  Then the regular covering $\widetilde{mG} \rightarrow mG$ has
$$
K(\widetilde{mG}) =  
\ZZ_m^{(m-2)|V|}
\oplus
\ZZ_{m^2}
\oplus
\ZZ_{m^2|V|}^{|V|-2} 
\oplus
\ZZ_{|V|-1}^{(m-1)|V|}.
$$
\end{proposition}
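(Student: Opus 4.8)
The plan is to apply Corollary~\ref{stacked-bipartite-corollary} directly, since $G = K_{|V|}$ is a connected loopless multigraph. First I would verify its two standing hypotheses. Every vertex of $K_{|V|}$ has degree $d_i = |V|-1$, relatively prime to $m$ by assumption, so the degree condition holds and the degrees are moreover all equal. For the critical group, I would invoke the classical fact that $K(K_{|V|}) \cong \ZZ_{|V|}^{|V|-2}$ (of order $|V|^{|V|-2}$ by Cayley's spanning-tree count); written in invariant-factor form $\bigoplus_{i=1}^{|V|-1} \ZZ_{k_i}$ with $k_i \mid k_{i+1}$, this reads $(k_1,\ldots,k_{|V|-1}) = (1, |V|, |V|, \ldots, |V|)$, so that $k_1 = 1$ and $k_2 = \cdots = k_{|V|-1} = |V|$.

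To reach the stronger conclusion \eqref{diagonal-Laplacian-short-exact-with-further-assumption}, I must also check that $m$ is relatively prime to the determinant of the adjacency matrix $A$ of $K_{|V|}$. Here $A = J - I$, where $J$ is the all-ones matrix, so its eigenvalues are $|V|-1$ (with multiplicity one) and $-1$ (with multiplicity $|V|-1$). Hence $\det A = (-1)^{|V|-1}(|V|-1)$ and $|\det A| = |V|-1$. The hypothesis that $|V|-1$ is relatively prime to $m$ is therefore exactly what is needed, and this is the only genuine computation in the argument; it is also the place where the assumption on $|V|-1$ enters a second time, having already been used through the vertex degrees.

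With both hypotheses in hand, I would substitute into \eqref{diagonal-Laplacian-short-exact-with-further-assumption}. The term $\ZZ_m^{(m-2)|V|}$ carries over verbatim. The middle term $\bigoplus_{i=1}^{|V|-1} \ZZ_{m^2 k_i}$ splits according to $(k_1,\ldots,k_{|V|-1}) = (1,|V|,\ldots,|V|)$ into $\ZZ_{m^2} \oplus \ZZ_{m^2|V|}^{|V|-2}$, while the final term $\bigoplus_{i=1}^{|V|} \ZZ_{d_i}^{m-1}$ becomes $\ZZ_{|V|-1}^{(m-1)|V|}$, since each of the $|V|$ summands contributes $\ZZ_{|V|-1}^{m-1}$. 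Collating these produces exactly the claimed decomposition. I expect no real obstacle beyond the adjacency-determinant eigenvalue computation and keeping the invariant-factor bookkeeping straight, as the substance of the result is already packaged in Corollary~\ref{stacked-bipartite-corollary}.
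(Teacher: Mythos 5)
Your proposal is correct and follows essentially the same route as the paper's own proof: verify the two hypotheses of Corollary~\ref{stacked-bipartite-corollary} (degrees all $|V|-1$, and $\det(J-I)=\pm(|V|-1)$ via the eigenvalues $(|V|-1,-1,\ldots,-1)$, both coprime to $m$ by assumption), then substitute the invariant factors $(k_1,\ldots,k_{|V|-1})=(1,|V|,\ldots,|V|)$ of $K(K_{|V|})$ into \eqref{diagonal-Laplacian-short-exact-with-further-assumption}. The only difference is that you carry out the final bookkeeping explicitly, which the paper leaves to the reader.
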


\begin{proof}
Again check Corollary~\ref{stacked-bipartite-corollary} applies.
The vertex degrees $(d_1,\ldots,d_{|V|})$ are all $|V|-1$, 
relatively prime to $m$.
The adjacency matrix for $G$ is 
$J - I$ of the $|V| \times |V|$ all ones matrix $J$ and the 
identity matrix $I$, with 
eigenvalues $(|V|,0,0,\ldots,0)-(1,1,1,\ldots,1)=
(|V|-1,-1,-1,\ldots,-1)$, and hence determinant 
$\pm (|V|-1)$, relatively prime to $m$.
Having checked the corollary applies, one needs
the well-known fact that
$K(G)=\ZZ_{|V|}^{|V|-2}$, with invariant factors
$(k_1,\ldots,k_{|V|-1})=(1,|V|,|V|,\ldots,|V|)$.
\end{proof}

\noindent
We remark that that in this example, the graph $\widetilde{mG}$ is the {\it complete $|V|$-partite graph}
$K_{m,m,\ldots,m}$, whose critical group was computed for all $|V|$ and $m$ in \cite[Cor. 5]{Jacobson}.
One can check that the answer given in Proposition~\ref{stacked-bipartite-completes}
agrees with this computation when $|V|-1$ and $m$ are relatively prime.

\section{Signed graphs in general:  allowing half-loops}
\label{general-signed-graph-section}

This section reviews the more general notion of signed graphs $G_\pm$, as in Zaslavsky \cite{Zaslavsky}, 
in which one allows positive and negative {\it half-loops}, with the goal of
generalizing our definition of the critical group $K(G_\pm)$ to this case.
This gives us the flexibility to consider in the
next section a more general notion of double covering, both for unsigned and signed graphs, 
in which a half-loop can be doubly covered by a single edge.  

For example, in 
Section~\ref{Bai-section} we will use this to re-interpret a calculation of H. Bai on the 
critical group of the $n$-dimensional cube graph $Q_n$:  the obvious projection $Q_n \rightarrow Q_{n-1}$ can be regarded as such a double cover, in which each edge of $Q_n$ parallel to the 
direction of projection doubly covers a half-loop added to its image vertex in $Q_{n-1}$; see Figure~\ref{cube-squashing-double-cover-figure}.

\subsection{Definition of a general signed graph critical group}

\begin{definition}
An unsigned {\it multigraph with half-loops} $G=(V,E)$ is a multigraph in which some of the self-loops have been designated as {\it half-loops}.  A {\it signed graph} $G_{\pm}$ consists of an underlying multigraph with half-loops
$G=(V,E)$ together with an 
assignment $\beta: E \rightarrow \{+1,-1\}(=:\{+,-\})$, 
designating edges positive or negative.
\end{definition}

For these more general signed graphs, we will need two closely related versions of an node-edge-incidence matrix, 
$\del=\del_{G_\pm}$ and $\delT=\delT_{G_\pm}$, both lying in $\ZZ^{V \times E}$, that is,
both regarded as $\ZZ$-linear maps $\ZZ^E \rightarrow \ZZ^V$.  As before, one first chooses
an arbitrary orientation of the edges $E$ to write them down.

\begin{definition}
The map $\del$ treats loops and half-loops the same, sending 
an edge $e$ directed from $u$ to $v$ to $+u -\beta(e)v$, even if $u=v$.
This means that $\del$ sends positive loops and positive half-loops to $0$,
and sends both a negative loop and negative half-loop on vertex $v$ to $+2v$.

The map $\delT$ is almost the same, except that it treats negative loops and negative half-loops 
unequally.  Just as with $\del$, the map $\delT$ sends an edge $e$ 
directed from $u$ to $v$ to $+u -\beta(e)v$ when $u \neq v$.
Also just as with $\del$, the map $\delT$ send both positive loops and positive half-loops to $0$,
and $\delT$ sends a negative loop on vertex $v$ to $+2v$.  
However, $\delT$ sends a negative {\it half-loop} on vertex $v$ to $+v$.
\end{definition}

\begin{remark}
The map $\delT$ is the signed graph incidence matrix used by Zaslavsky in \cite[\S 8A]{Zaslavsky}.
Note that $\delT=\del$ if and only if $G_\pm$ contains no negative half-loops.
\end{remark}

\begin{definition}
For a signed graph $G_\pm$, define its {\it critical group}
\begin{align}
K(G_\pm) & := \im \del / \im \del \delT^t 
\label{vertex-presentation-for-general-signed-Laplacian}\\
& \cong \ZZ^E / \left( \im \delT^t + \ker \del \right) \quad \text{ via Proposition~\ref{concordance-prop}}. 
\label{edge-presentation-for-general-signed-Laplacian}
\end{align}
where we will call the matrix 
$L(G_{\pm}) := \del\delT^t$ appearing above a {\it signed graph Laplacian}.
\end{definition}

\subsection{Issues of well-definition}

Note that this definition of $K(G_\pm)$ generalizes our 
earlier definition for the more restrictive signed graphs 
in the Introduction, where half-loops were disallowed. 
The next proposition answers some other obvious questions 
which are not as familiar or transparent as for unsigned graphs.

\begin{proposition}
The signed graph Laplacian matrix $L(G_{\pm}) = \del\delT^t$ has entries
$$
L(G_{\pm})_{u,v}=
\begin{cases}  
\# \{ \text{negative edges with endpoints }u,v \} & \\
 \quad -\# \{ \text{positive edges with endpoints }u,v \} &\text{ if }u \neq v,\\
 & \\
\#\{\text{non-loop edges (positive or negative) incident to }v\} & \\
 \quad  +4\#\{\text{negative (full) loops at }v\} + 2\#\{\text{negative half-loops at }v\} & \text{ if }u=v.
\end{cases}
$$
In particular, 
\begin{itemize}
\item $L(G_{\pm})$ is symmetric, and
\item both the matrix $L(G_\pm)$ and the isomorphism type of the
abelian group $K(G_\pm)$ do not depend upon the 
choice of orientation of the edges $E$ 
used to write down $\del$ and $\delT$.
\end{itemize}

\end{proposition}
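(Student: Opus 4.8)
The plan is to establish the entry formula by a direct computation of $L(G_\pm)=\del\delT^t$, and then read off both bullet points from the formula together with a single structural observation about reorientation. First I would write the $(u,v)$ entry as $\sum_{e\in E}\del_{u,e}\,\delT_{v,e}$ and split into the cases $u\neq v$ and $u=v$. When $u\neq v$, only non-loop edges with endpoint set $\{u,v\}$ can contribute, and since $\del$ and $\delT$ agree on non-loops, each such edge $e$ contributes $-\beta(e)=\mp1$ regardless of its chosen direction (reversing $e$ flips the relevant $\pm1$ in both factors at once). Summing over these edges gives exactly $\#\{\text{negative edges with endpoints }u,v\}-\#\{\text{positive edges with endpoints }u,v\}$.

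For the diagonal entry I would tally the four contribution types to $\sum_{e}\del_{v,e}\,\delT_{v,e}$: each non-loop edge incident to $v$ contributes $(+1)(+1)$ or $(-\beta(e))(-\beta(e))=1$; positive loops and half-loops contribute $0$ since $\del_{v,e}=0$; a negative full loop contributes $(+2)(+2)=4$; and a negative half-loop contributes $(+2)(+1)=2$. This last asymmetry is the crux of the whole computation: it arises precisely because $\delT$ sends a negative half-loop to $+v$ while $\del$ sends it to $+2v$, which is the very reason the two incidence maps are introduced. Getting this bookkeeping right—factor $2$ for half-loops versus $4$ for full loops—is the step most prone to error, though not a deep obstacle.

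Given the entry formula, symmetry is immediate, since every entry is described purely by combinatorial counts attached to an unordered pair of vertices (off-diagonal) or to a single vertex (diagonal), with no reference to orientation. For orientation-independence I would isolate the key observation: reversing a non-loop edge $e$ directed from $u$ to $v$ replaces its column $+u-\beta(e)v$ by $+v-\beta(e)u=-\beta(e)(u-\beta(e)v)$, i.e.\ scales that column by $-\beta(e)=\pm1$ in both $\del$ and $\delT$ simultaneously, while reorienting a loop changes nothing. Hence any two orientations of $G_\pm$ differ by right-multiplying both $\del$ and $\delT$ by one common diagonal sign matrix $D\in\ZZ^{E\times E}$ with $D=D^t$ and $D^2=I$.

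It then follows formally that $L(G_\pm)=\del\delT^t\mapsto(\del D)(\delT D)^t=\del D^2\delT^t=\del\delT^t$ is unchanged, so $L(G_\pm)$ does not depend on the orientation. Finally, in the presentation $K(G_\pm)=\im\del/\im\del\delT^t$, the numerator transforms as $\im(\del D)=\im\del$ (because $D$ is a $\ZZ$-automorphism of $\ZZ^E$, so $D(\ZZ^E)=\ZZ^E$) and the denominator $\im\del\delT^t$ is unchanged by the previous computation; thus $K(G_\pm)$ is literally the same subquotient of $\ZZ^V$, a fortiori orientation-independent up to isomorphism. (Equivalently, using the edge presentation \eqref{edge-presentation-for-general-signed-Laplacian}, the automorphism $D$ carries $\im\delT^t+\ker\del$ onto its image and so induces the isomorphism directly.)
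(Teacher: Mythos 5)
Your proposal is correct and follows essentially the same route as the paper: the paper declares the entry computation ``straightforward'' and observes that reorienting an edge only scales the corresponding column of $\del$ (and $\delT$) by $\pm 1$, so that $\im\del$ and $\im\del\delT^t$ are unchanged. You have simply filled in the details the paper omits---the explicit case analysis for the entries (including the crucial $2$ versus $4$ distinction for negative half-loops versus full loops) and the packaging of simultaneous column sign-flips as a diagonal involution $D$ with $(\del D)(\delT D)^t=\del\delT^t$.
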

\begin{proof}
The matrix entry calculation for $L(G_\pm)$ is straightforward, 
and does not depend on the orientations.  
Note also that the sublattice $\im \del$ inside $\ZZ^V$ 
does not depend upon the orientations, as
a typical column of $\im \del$ for an oriented edge 
$e=(u,v)$ is $\del(e)=+u-\beta(e)v=\pm (+v - \beta(e) u)$.  
Thus $K(G_\pm)=\im \del / \im \del \delT^t$ does not change when one reorients edges.
\end{proof}

It is fairly obvious for unsigned graphs that the critical group $K(G)$ is an isomorphism invariant of the graph $G=(V,E)$,
since permuting or relabelling vertices corresponds to permuting
the coordinates of the ambient space 
$\RR^V \supset \ZZ^V \supset \im \del \supset \im L(G)$,
without altering $K(G)$ up to isomorphism. 
Of course, the same holds for permutation of the vertices in
signed graphs $G_\pm$.  However, there is a stronger
notion of {\it signed graph isomorphism} that allows not only permuting or
relabelling vertices, but in addition,
at any vertex $v$ in $V$ one can perform the {\it switch} at $v$ 
(see \cite[\S 3]{Zaslavsky}) on $G_\pm$, 
which has the effect of exchanging $\beta(e)$ via $+ \leftrightarrow -$ 
for every non-loop, non-half-loop edge $e$ incident to $v$.
Algebraically, this corresponds to a sign change in the $v$-coordinate
of the ambient space $\RR^V$, and again does not alter $K(G)$ up to isomorphism.
We will take advantage of such signed graph isomorphisms in the next section.

\subsection{Balanced cycles and the image of $\del$}

The following simple notion is an important signed graph isomorphism invariant
(see \cite[\S 2]{Zaslavsky}), dictating the nature of $\im \del$ inside $\ZZ^V$.

\begin{definition}
For a signed graph $G_\pm$ with underlying multigraph $G=(V,E)$,
consider a subset $C \subset E$ forming a cycle in $G$, with $C$ possibly 
a singleton (full) loop or half-loop.  Call $C$ a {\it balanced } (resp. {\it unbalanced}) {\it cycle} of $G_\pm$ 
if the number of negative edges $e$ in $C$ (that is,
those with $\beta(e)=-$) is even (resp. odd).
\end{definition}

For unsigned graphs, the description of the 
sublattice $\im \del$ inside $\ZZ^V$ is fairly straightforward:
when $G=(V,E)$ has connected components with vertex sets 
$V_1,V_2,\ldots,V_t$, one has compatible direct sum decompositions
$
\ZZ^V = \bigoplus Z^{V_i}
$
and 
$
\im \del = \bigoplus_{i=1}^t \ZZ^{V_i}_{=0}
$
where $\ZZ^V_{=0}:=\{x \in \ZZ^V: \sum_{v \in V} x_v =0\}$.

For a signed graph $G_\pm$ one again has the same reduction to each connected component of its underlying multigraph, which one can therefore assume is connected.  

\begin{proposition}
\label{connected-signed-graph-im-of-del}
A signed graph $G_\pm$ with connected underlying multigraph has
two cases for $\im \del$:
\begin{enumerate}
\item[(i)] $\im \del = \ZZ^V_{\equiv 0 \bmod 2} :=\{x \in \ZZ^V: \sum_{v \in V} x_v \equiv 0 \bmod 2\}$
if $G_\pm$ contains at least one unbalanced cycle;  call $G_\pm$
{\bf unbalanced} in this case.
\item[(ii)] $\im \del = \ZZ^V_{=0}$ if $G_\pm$ has no unbalanced cycles;
call $G_\pm$ {\bf balanced} in this case.
\end{enumerate}

Moreover, $G_\pm$ being balanced 
is equivalent to it being signed-graph isomorphic
to an unsigned multigraph with half-loops, that is, 
a signed graph with no negative edges.
\end{proposition}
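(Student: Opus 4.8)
The plan is to first dispatch the inclusion $\im \del \subseteq \ZZ^V_{\equiv 0 \bmod 2}$ that holds in \emph{both} cases, and then to use \emph{switching} as the single tool driving everything else. For the easy inclusion I would simply note that every generating column $\del(e)=+u-\beta(e)v$ — including the loop and half-loop cases $\del(e)=0$ or $+2v$ — has coordinate-sum in $\{0,\pm 2\}$, hence even, so $\im\del$ lands in $\ZZ^V_{\equiv 0\bmod 2}$. The reverse inclusions are where the two cases diverge, and the key preliminary fact is that balance is a \emph{switching invariant}: switching at a vertex $v$ flips $\beta(e)$ on exactly the non-loop edges incident to $v$, and any cycle either avoids $v$ or passes through it using exactly two such edges, so the parity of negative edges around any cycle (and in particular the status of a negative loop or half-loop, which switching never touches) is unchanged.

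Next I would prove the ``Moreover'' clause, since it also furnishes case (ii). One direction is immediate from switching-invariance: an all-positive signed graph has only balanced cycles, so anything switching-equivalent to it is balanced. For the converse, assume $G_\pm$ is balanced and fix a spanning tree $T$ of the (connected) underlying graph. Rooting $T$ and defining a potential $\sigma\colon V\to\{\pm 1\}$ as the product of edge-signs along the tree-path from the root, I would switch at the set $\{v:\sigma(v)=-1\}$; by construction every tree edge becomes positive. For any non-tree, non-loop edge $e$, its fundamental cycle is balanced (balance being a switching invariant), and since the tree part of that cycle is now all positive, $e$ must be positive too. Finally, a negative loop or half-loop would be an unbalanced singleton cycle, contradicting balance, so all loops are positive as well; thus $G_\pm$ has been switched to an all-positive, i.e.\ unsigned, graph.

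Case (ii) then follows: switching is a signed-graph isomorphism (a $\pm 1$ sign change on the coordinates of $\RR^V$), so it identifies $\im\del$ with the image for the all-positive representative, for which the standard connected unsigned computation reviewed above gives $\ZZ^V_{=0}$. For case (i) I would instead argue directly, via a telescoping identity. Choosing an unbalanced cycle $v_0,e_1,v_1,\dots,e_k,v_0$, orienting each $e_i$ along the cycle (which does not affect $\im\del$), and setting the partial sign-products $\sigma_i=\prod_{j\le i}\beta(e_j)$, the combination $\sum_i \sigma_{i-1}\del(e_i)$ collapses to $(1-\sigma_k)v_0=2v_0$, since $\sigma_k=-1$ for an unbalanced cycle. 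Connectivity upgrades this to $2v\in\im\del$ for \emph{every} vertex $v$ (transport $2v_0$ along a path to $v$ and subtract), after which the tree-edge differences $u-v$ — recovered from the path images $u\mp v$ using the now-available $2v$ — generate $\ZZ^V_{=0}$. Together with the single vector $2v_0$ this yields all of $\ZZ^V_{\equiv 0\bmod 2}$, matching the inclusion from the first step and giving equality.

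The main obstacle is the converse half of the switching theorem in the second paragraph: the potential-function construction together with the verification that balance forces every non-tree edge to turn positive is the one genuinely global step, whereas everything else is local or a short lattice computation. A secondary delicate point worth flagging is that $\ZZ^V_{=0}$ is \emph{not} invariant under the coordinate sign-changes coming from switching — unlike $\ZZ^V_{\equiv 0\bmod 2}$, which is — so the equality asserted in case (ii) should be read for the all-positive representative, equivalently up to the signed-graph isomorphism, which is exactly how the switching reduction delivers it.
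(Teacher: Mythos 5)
Your proof is correct, and it supplies something the paper does not: the paper's entire proof of this proposition is ``Straightforward, or see Zaslavsky \cite[Prop. 2.1, Thm. 5.1]{Zaslavsky},'' so there is no argument there to compare against step by step. Your route is a complete, self-contained substitute: the parity inclusion $\im \del \subseteq \ZZ^V_{\equiv 0 \bmod 2}$, switching-invariance of balance, the spanning-tree potential argument for the ``Moreover'' clause, and then, for the unbalanced case, the telescoping identity $\sum_i \sigma_{i-1}\del(e_i) = (1-\sigma_k)v_0 = 2v_0$ around an unbalanced cycle followed by transporting $2v_0$ along paths and recovering the differences $u-v$. I checked each of these steps (including the loop/half-loop edge cases and the identity $\ZZ^V_{\equiv 0 \bmod 2} = \ZZ^V_{=0} + \ZZ \cdot 2v_0$) and they are sound; notably, case (i) is proved directly, which is the right move since unbalanced graphs admit no switching normal form as clean as the all-positive one.

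The most valuable part of your write-up is the caveat you flag at the end, which is in fact a genuine imprecision in the paper's statement: case (ii), read literally, is false for a general balanced signed graph. A single negative edge on $V=\{u,v\}$ is balanced (it has no cycles at all), yet $\im \del = \ZZ(u+v) \neq \ZZ(u-v) = \ZZ^V_{=0}$. What is true --- and what your switching reduction proves --- is that $\im \del = \{x \in \ZZ^V : \sum_{v \in V} \sigma(v)x_v = 0\}$, where $\sigma$ is the switching function making $G_\pm$ all-positive; equivalently, case (ii) holds verbatim for the all-positive representative. Case (i) suffers no such defect, exactly because $\ZZ^V_{\equiv 0 \bmod 2}$ is invariant under coordinate sign changes while $\ZZ^V_{=0}$ is not, as you say. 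The paper is not harmed downstream: every later invocation of case (ii) --- in Steps 1 and 2 of Case 2 in the proof of Theorem~\ref{signed-graph-double-cover-complex}, and in the proof of Proposition~\ref{matrix-tree-like-prop} --- occurs only after a tacit switch that makes the relevant graphs literally unsigned, where your corrected statement and the paper's coincide. So your proposal is not only correct but identifies the precise form in which the proposition should be read.
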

\begin{proof}
Straightforward, or see Zaslavsky \cite[Prop. 2.1, Thm. 5.1]{Zaslavsky}.
\end{proof}

\subsection{The cardinality of the critical group}
\label{signed-matrix-tree-subsection}

The definition of the signed graph critical group $K(G_\pm)$ 
does not make it clear that
it is a {\it finite} group.  We pause here
to show this, and to give a signed
generalization of the formula for the
cardinality of unsigned graph critical groups in terms of
maximal forests.  
The methodology is straightforward, proven analogously
to Zaslavsky's Matrix-Tree Theorem for signed graphs \cite[Thm. 8A.4]{Zaslavsky}, via the Binet-Cauchy Theorem\footnote{Our answer differs somewhat from
Zaslavsky's because he dealt with a Laplacian
matrix of the form $\delT \delT^t$ and computed the cardinality of
$\ZZ^V / \im \delT \delT^T$, 
whereas we deal with our Laplacian $L(G)= \del \delT^t$
and compute the cardinality of $K(G) = \im \del / \del \delT^t$.}.

To start, one needs to know the analogue of maximal 
forests in unsigned graphs.

\begin{proposition} 
\label{signed-base-description}
Given a signed graph $G_\pm$ having underlying multigraph $G=(V,E)$, consider
a subset $B \subset E$.  Then $B$ indexes a subset of columns of $\del$ or $\delT$ forming a basis for the full column space if and only
\begin{enumerate}
\item[$\bullet$] its intersection with
each balanced connected component of $G$ forms a spanning tree, and 
\item[$\bullet$]  its intersection with
each unbalanced connected component of $G$ 
is a collection of unicyclic connected components,
(that is, each connected component contains a unique cycle) and the
unique cycle of each component is unbalanced.
\end{enumerate}
\end{proposition}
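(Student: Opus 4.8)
The plan is to reduce the whole statement to a single rank computation for the columns of $\del$ indexed by $B$, and then read off the basis condition by comparing $|B|$ with that rank. Two preliminary reductions come first. Since an edge of a connected component $G_i=(V_i,E_i)$ of $G$ only involves vertices of $V_i$, the matrix $\del$ is block diagonal and $\im\del=\bigoplus_i \im\del_{G_i}$, with the blocks living in the complementary coordinate subspaces $\RR^{V_i}$; hence a set of columns is a basis for the full column space exactly when its restriction to each $G_i$ is a basis for $\im\del_{G_i}$, and it suffices to treat $G$ connected. Second, $\delT$ differs from $\del$ only in the columns of negative half-loops (where $\del$ gives $+2v$ and $\delT$ gives $+v$), so scaling a single column by a nonzero scalar changes neither linear independence nor span; thus $\del$ and $\delT$ have the same set of column-bases and I may work with $\del$.

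The heart of the argument is a rank formula. Given $B\subseteq E$, I would view $(V,B)$ as a spanning subgraph and let its connected components be $H_1,\ldots,H_r$ on vertex sets $W_1,\ldots,W_r$ (isolated vertices counting as singleton components). Applying Proposition~\ref{connected-signed-graph-im-of-del} to each $H_j$ as a signed graph in its own right gives $\RR$-rank $|W_j|-1$ when $H_j$ is balanced and $|W_j|$ when $H_j$ is unbalanced; since distinct components occupy disjoint coordinates these ranks add, yielding $\operatorname{rank}(B)=|V|-b$, where $b$ is the number of balanced components of $(V,B)$. Writing $\beta_j:=|F_j|-|W_j|+1$ for the cycle rank of $H_j=(W_j,F_j)$ and summing, one gets $|B|-\operatorname{rank}(B)=\sum_j\beta_j-u$, where $u=r-b$ is the number of unbalanced components. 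Because every unbalanced component contains a cycle (so $\beta_j\ge 1$) while balanced components have $\beta_j\ge 0$, one has $\sum_j\beta_j\ge u$, with equality exactly when every balanced component is a tree and every unbalanced component is unicyclic; in the latter case its unique cycle is forced to be the unbalanced one. Thus $B$ is linearly independent iff each component of $(V,B)$ is either a tree or a unicyclic graph whose unique cycle is unbalanced.

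Finally I would combine independence with the requirement that $|B|$ equal the full rank to pin down the bases. For $G$ balanced the full rank is $|V|-1$ and no unbalanced cycles exist, so an independent $B$ is a spanning forest; requiring $|B|=|V|-1$ forces a single tree, i.e.\ a spanning tree. For $G$ unbalanced the full rank is $|V|$, and for an independent $B$ the formula gives $|B|=|V|-b$, so $|B|=|V|$ forces $b=0$: every component is unicyclic with an unbalanced cycle and together they cover $V$, which is exactly the stated condition.

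I expect the main obstacle to be organizational rather than conceptual: getting the additivity of rank across components correct and handling the bookkeeping of loops, half-loops, and the $\del$/$\delT$ distinction exactly right. Once the rank formula $\operatorname{rank}(B)=|V|-b$ is in hand, the entire characterization falls out of the elementary inequality $\sum_j\beta_j\ge u$ and the edge count $|B|=|V|-r+\sum_j\beta_j$.
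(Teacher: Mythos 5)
Your proof is correct, and it supplies an actual argument where the paper gives essentially none: the paper's entire proof of this proposition reads ``Straightforward, or see Zaslavsky \cite[Thm. 5.1(g)]{Zaslavsky}'', deferring to Zaslavsky's characterization of independent sets in the frame matroid of a signed graph. Your route makes the ``straightforward'' claim precise and self-contained modulo Proposition~\ref{connected-signed-graph-im-of-del}, which does precede this statement in the paper, so there is no circularity. The three ingredients all check out: (1) each column of $\delT$ is a nonnegative scalar multiple of the corresponding column of $\del$ (they differ only at negative half-loops, by a factor of $2$), so the two matrices have the same column space and the same independent column sets; (2) the componentwise rank formula $\operatorname{rank}(B)=|V|-b$ follows from Proposition~\ref{connected-signed-graph-im-of-del} applied to each component of the spanning subgraph $(V,B)$, since components occupy disjoint coordinate blocks; (3) the Euler-type count $|B|=|V|-r+\sum_j\beta_j$ turns the inequality $\sum_j\beta_j\ge u$ into exactly the independence criterion (every component a tree or unicyclic with unbalanced unique cycle), after which comparing $|B|$ with the ambient rank pins down the bases in both the balanced and unbalanced cases. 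The loop bookkeeping you flagged as a risk also works out correctly under the paper's conventions: a positive loop or half-loop is a balanced singleton cycle, so any $B$ containing one acquires a balanced non-tree component and is declared dependent, consistent with its column being zero; a negative loop or half-loop is an unbalanced singleton cycle, consistent with its nonzero column ($2v$, resp. $v$). In effect you have reproved the rank function of Zaslavsky's signed-graphic matroid, which is the content of the cited result; what your version buys is a proof entirely internal to the paper's notation and previously stated propositions.
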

\begin{proof}
Straightforward, or see Zaslavsky \cite[Thm. 5.1(g)]{Zaslavsky}.
\end{proof}

\noindent
With this in hand, we will be able to
describe the cardinality of the critical group $K(G_\pm)$
as a sum over such bases $B \subset E$.  Given such a base $B$ as described in
Proposition~\ref{signed-base-description}, define the quantity
$d(B)$ to be a product over the connected components $B_1,\ldots,B_t$
induced by the edges of $B$, where
\begin{enumerate}
\item[$\bullet$]
a component $B_i$ forming a spanning tree for a balanced component of $G$ 
contributes a factor of $1$,
\item[$\bullet$]
a component $B_i$ which is unicyclic will either contribute a factor of $2$ (resp. $4$)
if its unique cycle is a singleton negative half-loop (resp. 
is an unbalanced cycle that contains no negative half-loop).
\end{enumerate}

\begin{proposition}
\label{matrix-tree-like-prop}
A signed graph $G_\pm$ having $c$ unbalanced connected components will have 
$$
|K(G_\pm)| = 2^{-c} \sum_{B} d(B),
$$
where $B$ runs over the bases in Proposition~\ref{signed-base-description}.
In particular, $K(G)$ is finite.

\end{proposition}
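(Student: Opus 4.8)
The plan is to compute $|K(G_\pm)| = [\im \del : \im \del \delT^t]$ directly as a lattice index and evaluate it by Cauchy--Binet, paralleling the unsigned Matrix--Tree Theorem. First I would reduce to the connected case: since $\del$ and $\delT$ are block-diagonal with respect to the decomposition $\ZZ^V = \bigoplus_i \ZZ^{V_i}$ into connected components, both $\im \del$ and $\im \del \delT^t$ split as direct sums over components, whence $|K(G_\pm)| = \prod_i |K(G^{(i)}_\pm)|$. Likewise a base $B$ as in Proposition~\ref{signed-base-description} decomposes as a disjoint union of bases of the components, so $\sum_B d(B)$ factors as $\prod_i \sum_{B_i} d(B_i)$, and $c = \sum_i c_i$; hence it suffices to prove the formula for a single connected $G_\pm$. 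For connected $G_\pm$, Proposition~\ref{connected-signed-graph-im-of-del} leaves two cases, $G_\pm$ balanced ($c=0$, $\im \del = \ZZ^V_{=0}$) or unbalanced ($c=1$, $\im \del = \ZZ^V_{\equiv 0\bmod 2}$), which I would treat slightly differently.

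In the unbalanced connected case the rank of $\del$ is $|V|$, so $L(G_\pm) = \del\delT^t$ is a nonsingular $|V|\times|V|$ matrix and $\im L(G_\pm)$ is a full-rank sublattice of $\ZZ^V$ with $[\ZZ^V : \im L(G_\pm)] = |\det L(G_\pm)|$. Since $\im \del\delT^t = \im L(G_\pm) \subseteq \im \del \subseteq \ZZ^V$ and $[\ZZ^V : \im \del] = 2$, I obtain $|K(G_\pm)| = [\im \del : \im L(G_\pm)] = |\det L(G_\pm)|/2$. Cauchy--Binet then expands $\det L(G_\pm) = \sum_{B}\det(\del_B)\det(\delT_B)$, the sum over $|V|$-element column subsets $B$; by Proposition~\ref{signed-base-description} the term vanishes unless $B$ is a base (a spanning union of unbalanced unicyclic pieces), so $|K(G_\pm)| = 2^{-1}\sum_B \det(\del_B)\det(\delT_B)$.

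In the balanced connected case $\det L(G_\pm)=0$, so I would instead pass to a reduced Laplacian: deleting one vertex $*$ gives matrices $\del_0,\delT_0$ with that row removed, and since $L(G_\pm)$ has vanishing row sums, the projection $\ZZ^V_{=0}\to\ZZ^{V\setminus *}$ forgetting the $*$-coordinate carries $\im \del = \ZZ^V_{=0}$ isomorphically onto $\ZZ^{V\setminus *}$ and $\im \del\delT^t$ onto $\im(\del_0\delT_0^t)$. Hence $|K(G_\pm)| = |\det(\del_0\delT_0^t)|$, and Cauchy--Binet again yields a sum of $\det((\del_0)_B)\det((\delT_0)_B)$ over spanning trees $B$; since a balanced connected signed graph is switching-equivalent to an unsigned one and carries no negative (half-)loops, one has $\del=\delT$ there, each tree contributes $d(B)=1$, and $2^{-c}=1$, recovering the classical count. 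Combining the two cases gives $|K(G_\pm)| = 2^{-c}\sum_B d(B)$, and finiteness of $K(G_\pm)$ is then immediate.

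The crux, common to both cases, is the evaluation $\det(\del_B)\det(\delT_B) = d(B)$, which I expect to be the main obstacle. Because $B$ splits into unicyclic pieces supported on disjoint vertex sets, the submatrices are block-diagonal up to simultaneous row and column permutations whose signs cancel in the product, reducing the claim to a single piece $B_i$. There $\del_{B_i}$ and $\delT_{B_i}$ agree unless the unique cycle is a negative half-loop, in which case they differ only by halving that one column, so the two determinants always share a sign and no cancellation occurs. I would then compute the building-block values: the tree part contributes $\pm1$, while the unique cycle contributes $|\det|=2$ for both $\del_{B_i}$ and $\delT_{B_i}$ when it is an unbalanced non-half-loop cycle (product $4$), whereas a negative half-loop gives $|\det\del_{B_i}|=2$ but $|\det\delT_{B_i}|=1$ (product $2$) --- exactly the factors defining $d(B)$. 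This last step is essentially Zaslavsky's minor computation for signed incidence matrices, and is precisely where the signed subtlety, namely the distinction between $\del$ and $\delT$ on negative half-loops, gets used.
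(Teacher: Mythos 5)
Your proposal is correct and follows essentially the same route as the paper: reduction to the connected case by multiplicativity, the balanced connected case by switching to an unsigned graph and invoking the classical spanning-tree count, and the unbalanced connected case via Cauchy--Binet applied to $\det(\del\delT^t)$ together with the key evaluation $\det(\del|_B)\det(\delT|_B)=d(B)$, including the crucial no-cancellation point (the paper justifies the sign agreement via the orientation choices, while your observation that $\delT|_B$ is obtained from $\del|_B$ by halving the negative half-loop columns is an equivalent, slightly cleaner way to see it). The only cosmetic difference is that in the balanced case the paper cites the classical matrix-tree theorem directly rather than re-deriving it with a reduced Laplacian as you do.
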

\begin{proof}
Note that by the definition of $d(B)$, the quantity $F(G_\pm)$ 
on the right side of the proposition which we wish to show equals $|K(G_\pm)|$ 
has the multiplicative property that
$F(G_\pm) = \prod_{i=1}^t F(G_\pm^i)$
if the underlying multigraph $G_{\pm}=(V,E)$ has connected components 
$G^1_{\pm},\ldots,G^t_{\pm}$.  The compatible direct sum decompositions
$
\ZZ^V = \bigoplus_i \ZZ^{V^i}
$
and
$
K(G_\pm) \cong \bigoplus_i K(G^i_{\pm})
$
imply that $|K(G_\pm)|$ has this same multiplicative
property, so it suffices to prove the proposition when $G_\pm$ is connected.

When $G_\pm$ is connected and balanced, one can assume after applying a signed graph isomorphism, that $G_\pm$ is an unsigned graph.  Then
$K(G_\pm)$ is the usual critical group, which is finite, and has  
cardinality equal to 
the number of spanning trees, which agrees with $F(G_\pm)$.

When $G_\pm$ is connected and unbalanced, we calculate
$\det L(G_\pm)$ explicitly and show that it is positive: this will 
in particular show that $K(G_\pm):=\im \del/\im L(G_\pm)$ is finite, 
since it implies $\im L(G_\pm)$ has full rank inside
$\ZZ^V$ and hence also inside $\im \del$.
One starts by using the Binet-Cauchy Theorem 
to express $\det L(G_\pm)$ as a sum over bases $B$:
$$
\begin{aligned}
\det L(G_\pm)=\det \del \delT^t 
&=\sum_B \det \del|_{\text{cols }B} \cdot \det \delT^t|_{\text{rows }B}  
=\sum_B \det \del|_{\text{cols }B} \cdot \det \delT|_{\text{cols }B} \\
&=\sum_B \prod_{\substack{\text{connected}\\\text{components }\\B_i\text{ of }B}} 
 \det \del|_{\text{cols }B_i} \cdot \det \delT|_{\text{cols }B_i} 
=\sum_B d(B)
\end{aligned}
$$
The last equality used the following calculation,
which one can reduce to the case where $B_i$ is an unbalanced cycle,
via an induction that plucks off leaf vertices
(vertices with only one incident edge):

\begin{align}
\det \del|_{\text{cols B}_i} & = \pm 2,\label{del-cases-of-plus-minus-2}\\
\det \delT|_{\text{cols }B_i} &= 
\begin{cases}
\pm 1 & \text{ if its unbalanced cycle is a negative half-loop,}\\
\pm 2 & \text{ otherwise. } 
\end{cases},\label{delT-cases-of-plus-minus-2}
\end{align}
so that one has
\begin{equation}
\label{products-are-plus-twos-and-fours}
\del|_{\text{cols B}_i} \cdot \det \delT|_{\text{cols }B_i}
=
\begin{cases}
+2 & \text{ if its unbalanced cycle is a negative half-loop,}\\
+4 & \text{ otherwise. } 
\end{cases}
\end{equation}
A crucial point to be emphasized here is 
that signs on the $+2$ and $+4$ are always positive
in \eqref{products-are-plus-twos-and-fours}
because the plus/minus signs on the $\pm 1, \pm 2$
{\it always agree in } 
\eqref{del-cases-of-plus-minus-2}
and \eqref{delT-cases-of-plus-minus-2}:
these signs will be 
determined by the choices of orientations of the edges in $B_i$.
This shows 
\begin{equation}
\label{signed-Laplacian-nonsingular}
|\ZZ^V / \im L(G_\pm)|=\det L(G_\pm)=\sum_B d(B) > 0
\end{equation}
for connected unbalanced signed graphs $G_\pm$.  
But $F(G_\pm)=\frac{1}{2}\sum_B d(B)$ in this case,
in agreement with 
$$
|K(G)|=|\im \del/\im L(G_\pm)|= \frac{1}{2}|\ZZ^V / \im L(G_\pm)|
$$
where the $\frac{1}{2}$ arises here since 
$\im \del$ is the index two sublattice 
$\ZZ^V_{\equiv 0 \bmod 2}$ inside $\ZZ^V$.
\end{proof}

\section{Doubly covering a signed graph}
\label{signed-graph-double-cover-section}

Our goal in this section is to define a
notion of a {\it signed graph double coverings}, 
leading to a more flexible 
generalization of 
Theorem~\ref{unsigned-double-cover-theorem}.

\subsection{The double cover construction for signed graphs}

\begin{definition}
\label{signed-graph-double-cover-definition}
Given two signed graphs $G^{(i)}_\pm$ for $i=1,2$ with same underlying multigraph
$G=(V,E)$, and edge orientation on $E$ chosen arbitrarily,
define a signed graph 
$$
\tilde{G}_\pm :=\Double( G^{(1)}_\pm, G^{(2)}_\pm ),
$$
which we will think of as a {\it double cover of the base $G^{(1)}_\pm$ 
parametrized by the voltage-assignment signed graph $G^{(2)}_\pm$}.
It has vertex set 
$
\tilde{V}:=\{v_+,v_-\}_{v \in V}
$
and edge set $\tilde{E}$ defined and oriented as follows.
For each edge $e=(u,v)$ in $G$ which is not a half-loop,
(so possibly $u=v$ if $e$ is a full loop), 
create two full (directed) edges $e_+,e_-$ of $\tilde{G}_\pm$  
having the same sign as $e$ {\bf in }$G^{(1)}_\pm$, with these endpoints:
$$
\begin{cases}
e_+=(u_+,v_+), e_-=(u_-,v_-) & 
 \text{ if }G^{(1)}_\pm, G^{(2)}_\pm\text{ agree on the sign of }e,\\
e_+=(u_+,v_-), e_-=(u_-,v_+) & 
 \text{ if }G^{(1)}_\pm, G^{(2)}_\pm\text{ disagree on the sign of }e.
\end{cases}
$$
For each half-loop edge $e$ at vertex $v$ in $G$ 
create either one or two edges of $\tilde{G}_\pm$ 
having the same sign as $e$ {\bf in }$G^{(1)}_\pm$, with these endpoints:
$$
\begin{cases}
\text{ half-loops }e_+=(v_+,v_+), e_-=(v_-,v_-) & 
 \text{ if }G^{(1)}_\pm, G^{(2)}_\pm\text{ agree on the sign of }e,\\
\text{ edge }\tilde{e}=(v_+,v_-)& 
 \text{ if }G^{(1)}_\pm, G^{(2)}_\pm\text{ disagree on the sign of }e.
\end{cases}
$$
\end{definition}

\noindent
Figure~\ref{cube-squashing-double-cover-figure} depicts an example,
first showing 
$\tilde{G} =\Double( G^{(1)}_\pm, G^{(2)}_\pm ) \rightarrow G^{(1)}_\pm$,
and then below it $G^{(2)}_\pm$.

\begin{figure}
\label{cube-squashing-double-cover-figure}
\begin{center}
\begin{tikzpicture}[->,scale=0.8, auto,swap]
    \foreach \pos/\name in {{(-5,-9)/a},{(-2,-9)/b},{(-2,-6)/c},{(-5,-6)/d}}
        \node[vertex] (\name) at \pos {$\name$};
    \foreach \source/\dest/\sign in {a/b/+,b/c/+,c/d/+,d/a/+}
        \path (\source) edge node[auto] {$\sign$} (\dest);
\foreach \source in {a,b,c,d}    
	\path[every node/.style={font=\sffamily\small}]
        (\source)   edge[in=10, out=70, loop] node[auto] {$-$}  (\source);
\foreach \source in {a,b,c,d}    
	\path[every node/.style={font=\sffamily\small}]
        (\source)   edge[in=110, out=170, loop] node[auto] {$-$}  (\source);
\foreach \source in {a,b,c,d}    
	\path[every node/.style={font=\sffamily\small}]
        (\source)   edge[in=190, out=260, loop] node[auto] {$+$}  (\source);

\draw[arrows=->,line width=1pt](-3.5,-3)--(-3.5,-4.5);

    \foreach \pos/\name in {{(-5,0)/a_+},{(-5,3)/b_+},{(-2,3)/c_+},{(-2,0)/d_+},
                            {(-6.5,-1.5)/a_-},{(-6.5,1.5)/b_-},{(-3.5,1.5)/c_-},{(-3.5,-1.5)/d_-}}
        \node[vertex] (\name) at \pos {$\name$};
    \foreach \source/ \dest/ \sign in {b_+/c_+/+,d_+/a_+/+,b_-/c_-/+,d_-/a_-/+}
        \path (\source) edge node[above, near start] {$\sign$} (\dest);
     \foreach \source/ \dest/ \sign in {a_+/b_+/+,c_+/d_+/+,a_-/b_-/+,c_-/d_-/+,a_+/a_-/+,b_+/b_-/+,c_+/c_-/+,d_+/d_-/+}
        \path (\source) edge node[near start, left] {$\sign$} (\dest);

\foreach \source in {a_+,a_-,b_+,b_-,c_+,c_-,d_+,d_-}    
	\path[every node/.style={font=\sffamily\small}]
        (\source)   edge[in=110, out=170, loop] node[auto] {$-$}  (\source);
\foreach \source in {a_+,a_-,b_+,b_-,c_+,c_-,d_+,d_-}    
	\path[every node/.style={font=\sffamily\small}]
        (\source)   edge[in=10, out=70, loop] node[auto] {$-$}  (\source);
\end{tikzpicture}

\begin{tikzpicture}[->,scale=0.8, auto,swap]
    \foreach \pos/\name in {{(0,0)/a},{(0,3)/b},{(3,3)/c},{(3,0)/d}}
        \node[vertex] (\name) at \pos {$\name$};
    \foreach \source/\dest/\sign in {a/b/+,b/c/+,c/d/+,d/a/+}
        \path (\source) edge node[auto] {$\sign$} (\dest);
\foreach \source in {a,b,c,d}    
	\path[every node/.style={font=\sffamily\small}]
        (\source)   edge[in=10, out=70, loop] node[auto] {$-$}  (\source);
\foreach \source in {a,b,c,d}    
	\path[every node/.style={font=\sffamily\small}]
        (\source)   edge[in=110, out=170, loop] node[auto] {$-$}  (\source);
\foreach \source in {a,b,c,d}    
	\path[every node/.style={font=\sffamily\small}]
        (\source)   edge[in=190, out=260, loop] node[auto] {$-$}  (\source);
\end{tikzpicture}

\caption{
An example of a signed graph double covering 
$$
\tilde{G}_\pm := 
\Double(G^{(1)}_\pm, G^{(2)}_\pm) \overset{\pi^{(1)}}{\longrightarrow} G^{(1)}
$$
and the signed graph $G^{(2)}$,
in which all of the loop edges shown are intended to be half-loops.
Note that the half-loop edges $e$ in the underlying graph $G$ where $G^{(1)}, G^{(2)}$
disagree on their $+/-$ voltage assignment are ``doubly covered'' under the projection by the edges of $\tilde{G}$ that point in the third coordinate
direction.}

\end{center}
\end{figure}

\subsection{Properties of signed graph double coverings}

As anticipated in the phrasing of 
Definition~\ref{signed-graph-double-cover-definition}, 
we will speak of a double-covering map 
$$
\begin{array}{rcl}
\tilde{G}_\pm & \overset{\pi_{(1)}}{\longrightarrow} &G^{(1)}_\pm\\
v_+, v_-  &\longmapsto &v\\
e_+, e_- &\longmapsto &e\\
\end{array}
$$
as also ``doubly covering'' each half-loop $e=(v,v)$
with opposite signs in $G^{(1)}_\pm, G^{(2)}_\pm$ via
$
\tilde{e}=(v_+,v_-) \longmapsto e.
$

We first note that it generalizes the unsigned graph double coverings defined earlier.  The proof of the following
proposition is a straightforward exercise in the definitions.

\begin{proposition}
Let 
$
\tilde{G}_\pm =\Double( G^{(1)}_\pm, G^{(2)}_\pm ),
$
be a signed graph double covering, in which 
$G^{(1)}_\pm=G$ is an unsigned multigraph with no half-loops, meaning
that $G^{(1)}$ has $\beta(e)=+$ and has no half-loops.

Then $\tilde{G}_\pm=\tilde{G}$ 
is also an unsigned multigraph with no half-loops,
and $\tilde{G}_\pm \overset{\pi_{(1)}}{\longrightarrow} G^{(1)}_\pm$
is the same as the (regular) double covering $\tilde{G} \overset{\pi}{\rightarrow} G$
corresponding to the voltage graph $G^{(2)}_\pm$. \qedhere
\end{proposition}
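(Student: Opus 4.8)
The plan is to simply unwind Definition~\ref{signed-graph-double-cover-definition} under the stated hypotheses and check that it reproduces, edge-for-edge and sign-for-sign, the unsigned double-cover construction from the Introduction applied to the signed graph $G^{(2)}_\pm$. Since $G^{(1)}_\pm$ and $G^{(2)}_\pm$ share the same underlying multigraph $G=(V,E)$, the hypothesis that $G^{(1)}_\pm=G$ is unsigned with no half-loops forces $G$ itself to have no half-loops. Consequently the half-loop branch of the $\Double$ construction never fires, and every edge $e$ of $G$ is processed by the ``full edge'' clause, producing exactly two full edges $e_+,e_-$ of $\tilde{G}_\pm$.

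First I would record that the sign each produced edge inherits is, by definition, the sign of $e$ \emph{in }$G^{(1)}_\pm$, which is $+$ for every $e$. Hence $\tilde{G}_\pm$ carries only positive edges and no half-loops, so $\tilde{G}_\pm=\tilde{G}$ is an unsigned multigraph with no half-loops, establishing the first assertion.

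Next I would match endpoints. Because $G^{(1)}_\pm$ assigns $\beta(e)=+$ to every edge, the dichotomy ``$G^{(1)}_\pm, G^{(2)}_\pm$ agree versus disagree on the sign of $e$'' is precisely the dichotomy ``$e$ is positive versus negative in $G^{(2)}_\pm$''. Substituting this into the two cases of Definition~\ref{signed-graph-double-cover-definition} gives $e_+=(u_+,v_+),\,e_-=(u_-,v_-)$ when $e$ is positive in $G^{(2)}_\pm$ and $e_+=(u_+,v_-),\,e_-=(u_-,v_+)$ when $e$ is negative, which is verbatim the double-cover rule listed in the Introduction for the signed graph $G^{(2)}_\pm$. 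Since both constructions also have vertex set $\{v_+,v_-\}_{v\in V}$ and both covering maps forget the $\pm$ subscripts, the two maps $\tilde{G}_\pm \to G^{(1)}_\pm$ and $\tilde{G}\to G$ coincide on the nose; by Example~\ref{double-covers-are-regular-example} this is the regular $\ZZ_2$-covering attached to the voltage graph $G^{(2)}_\pm$.

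There is essentially no obstacle here: the content is purely definitional bookkeeping. The one point that warrants a moment's care is the orientation convention---verifying that the orientations assigned to $e_+,e_-$ in the $\Double$ construction agree with the pulled-back orientations used in the voltage-graph covering of Section~\ref{regular-coverings-section}---but since in both constructions $e_+$ and $e_-$ inherit their orientation from the fixed orientation of $e$ in $G$, this too is immediate.
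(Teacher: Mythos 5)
Your proposal is correct, and it is exactly the argument the paper has in mind: the paper omits the proof, stating only that it "is a straightforward exercise in the definitions," and your case-by-case unwinding (no half-loops in $G$ kills the half-loop clause; all signs inherited from $G^{(1)}_\pm$ are $+$; the agree/disagree dichotomy reduces to the sign of $e$ in $G^{(2)}_\pm$, reproducing the Introduction's rule) is precisely that exercise carried out.
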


\noindent
The asymmetry of the roles of 
$G^{(1)}_\pm$ and $G^{(2)}_\pm$ in constructing 
$\Double(G^{(1)}_\pm, G^{(2)}_\pm)$ turns out to
be illusory.

\begin{proposition}
\label{asymmetry-illusory-prop}
The two signed graphs 
$\Double(G^{(2)}_\pm, G^{(1)}_\pm)$
and 
$\Double(G^{(1)}_\pm, G^{(2)}_\pm)$
are actually signed graph isomorphic:  one is obtained from the
other by switching at every vertex in the subset
$\{v_-\}_{v \in V}$ of their common vertex set $\tilde{V}=\{v_+,v_-\}_{v \in V}$.
\end{proposition}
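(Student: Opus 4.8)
The plan is to exhibit the claimed isomorphism concretely as the switch at the vertex subset $S := \{v_-\}_{v \in V}$, and to verify that this switch carries one double cover onto the other as an \emph{equality} of signed graphs (not merely up to a further relabelling of vertices). First I would observe that $\Double(G^{(1)}_\pm, G^{(2)}_\pm)$ and $\Double(G^{(2)}_\pm, G^{(1)}_\pm)$ have \emph{identical underlying multigraphs} on the common vertex set $\tilde{V} = \{v_+, v_-\}_{v \in V}$: in Definition~\ref{signed-graph-double-cover-definition} the endpoints chosen for each cover edge, and likewise whether a half-loop lifts to two half-loops or to a single crossing edge $\tilde{e} = (v_+, v_-)$, depend only on \emph{whether} $G^{(1)}_\pm$ and $G^{(2)}_\pm$ agree on the sign of $e$ --- a condition symmetric in the two graphs. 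Hence the two doubles differ \emph{only in their sign assignments}: writing $\beta^{(1)}, \beta^{(2)}$ for the sign functions of $G^{(1)}_\pm, G^{(2)}_\pm$, every cover edge over $e$ is signed $\beta^{(1)}(e)$ in the first double and $\beta^{(2)}(e)$ in the second. It therefore suffices to show that switching the second double at $S$ converts each sign $\beta^{(2)}(e)$ into $\beta^{(1)}(e)$.

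Next I would recall from Section~\ref{general-signed-graph-section} that switching at $S$ flips the sign of exactly those edges having precisely one endpoint in $S$, while leaving all loops and half-loops untouched, and then run the case analysis edge by edge. For a non-half-loop edge $e=(u,v)$ on which the two graphs \emph{agree}, the cover edges are $(u_+,v_+)$ and $(u_-,v_-)$, each with zero or two endpoints in $S$, hence unaffected by the switch; this is consistent, since $\beta^{(1)}(e) = \beta^{(2)}(e)$ already. When they \emph{disagree}, the cover edges are $(u_+, v_-)$ and $(u_-, v_+)$, each with exactly one endpoint in $S$, so the switch flips their common sign $\beta^{(2)}(e)$ to $-\beta^{(2)}(e) = \beta^{(1)}(e)$, as required.

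The step needing genuine care --- and the only place the half-loop conventions intervene --- is the half-loop case. A half-loop $e$ at $v$ on which the graphs \emph{agree} lifts to two honest half-loops $(v_+, v_+)$ and $(v_-, v_-)$, which the switch ignores; this again matches the equality of signs. But a half-loop on which they \emph{disagree} lifts to the single \emph{non-loop} crossing edge $\tilde{e} = (v_+, v_-)$, which has exactly one endpoint (namely $v_-$) in $S$ and is therefore flipped, sending $\beta^{(2)}(e)$ to $\beta^{(1)}(e)$. The subtlety I would flag is precisely that the switch acts on the \emph{cover} edges according to their loop/non-loop status in $\tilde{V}$, which need not match that of the base edge: a disagreeing half-loop is ``doubly covered'' by a crossing edge that the switch does see. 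Once all four cases are verified, the switch at $S$ gives an equality of signed graphs from the second double to the first; since a switch is itself a signed graph isomorphism, this establishes the asserted isomorphism.
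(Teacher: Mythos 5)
Your proof is correct and takes essentially the same approach as the paper's: both exhibit the switch at $\{v_-\}_{v \in V}$ explicitly and verify it exchanges the two doubles, resting on the same key observation that the endpoint (cut-crossing) structure of the cover is symmetric in $G^{(1)}_\pm, G^{(2)}_\pm$ while the switch flips exactly the cut-crossing edges, i.e., the lifts of the disagreeing edges. Your four-case check (including the disagreeing half-loop covered by a single crossing edge, which the switch does see) merely spells out what the paper's cut-based argument handles uniformly.
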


\begin{proof}
Let $\tilde{G}_\pm:=\Double(G^{(1)}_\pm, G^{(2)}_\pm)$,
and let $\tilde{G}'_\pm$ be the result of performing
the signed isomorphisms described in the proposition.
Then the edges whose voltage $+/-$ signs will have changed from
$\tilde{G}_\pm$
to $\tilde{G}'_\pm$
are the edges that cross the
vertex cut from $\{v_-\}_{v \in V}$ to $\{v_+\}_{v \in V}$.  
These are exactly the edges
of $G$ whose voltage signs in $G^{(1)}_\pm,G^{(2)}_\pm$ disagreed, 
so that in $\tilde{G}'_\pm$ they carry voltages that
agree with $G^{(2)}_\pm$.  
The remaining edges in
$\tilde{G}_\pm$ already agreed in voltage with $G^{(2)}_\pm$, so 
{\it all} edges of $\tilde{G}'_\pm$ agree with
$G^{(2)}_\pm$.  In addition, those edges of 
$\tilde{G}'_\pm$ which cross the vertex cut 
will still be the ones where the voltages on $G^{(1)}_\pm, G^{(2)}_\pm$ 
disagree.  Thus $\tilde{G}'_\pm$ matches the description of $\Double(G^{(2)}_\pm, G^{(1)}_\pm)$.
\end{proof}

\noindent
This hidden symmetry between 
 the ``base graph'' $G^{(1)}_\pm$ and
``voltage assignment'' $G^{(2)}_\pm$
becomes apparent only after generalizing graph double covers to signed
graphs, and was one motivation for introducing such covers.

\section{The short complex for a double covering of signed graphs}
\label{short-complex-section}

Our goal here is a second generalization of 
Theorem~\ref{unsigned-double-cover-theorem}
that applies to signed graph double covers $\Double(G^{(1)}_\pm, G^{(2)}_\pm)$.
When working with these signed graph 
critical groups $K(G_\pm)$, we 
could in principle use the edge-presentation
\eqref{edge-presentation-for-general-signed-Laplacian}
as $K(G_\pm)=\ZZ^E / (\im \delT^t + \ker \del)$.
However, we have found it more convenient in the proofs
of this section to work with the vertex-presentation
\eqref{vertex-presentation-for-general-signed-Laplacian}:
$$
K(G_\pm) = \im \del / \im \del \delT^t
=\im \del / L(G_\pm).
$$
Thus we define various maps on the level of 
the vertex groups $\ZZ^V, \ZZ^{\tilde{V}}$,
inducing morphisms of critical groups.

\begin{definition}
Given $\tilde{G}_\pm =\Double(G^{(1)}_\pm, G^{(2)}_\pm)$, as before, 
consider free $\ZZ$-modules $\ZZ^V, \ZZ^{\tilde{V}}$ and
$\ZZ^E,\ZZ^{\tilde{E}}$,
having $\ZZ$-basis elements indexed by vertices or edges 
in sets $V,\tilde{V}$ and $E, \tilde{E}$. 

On the level of vertices, define $\ZZ$-linear maps
$$
\begin{array}{rcccccl}
\ZZ^{\tilde{V}} & \overset{\pi_{(1)}}{\longrightarrow} & \ZZ^V & \qquad & 
\ZZ^{V} & \overset{\pi^t_{(1)}}{\longrightarrow} & \ZZ^{\tilde{V}}\\
v_+ & \longmapsto  & +v &    & v & \longmapsto  & v_++v_-\\
v_- & \longmapsto  & +v &    &   &              & \\
 & & & & & & \\
\ZZ^{\tilde{V}} & \overset{\pi_{(2)}}{\longrightarrow} & \ZZ^V & \qquad & 
\ZZ^{V} & \overset{\pi^t_{(2)}}{\longrightarrow} & \ZZ^{\tilde{V}}\\
v_+& \longmapsto  & +v&      & v & \longmapsto  & v_+-v_-\\
v_-& \longmapsto  & -v&      &   &              & 
\end{array}
$$
\noindent
Also define an involution 
$$
\begin{array}{rcl}
\ZZ^{\tilde{V}} & \overset{\iota}{\longrightarrow} & \ZZ^{\tilde{V}} \\
v_+ & \longmapsto & v_- \\
v_- & \longmapsto & v_+ 
\end{array}
$$
and these two sublattices of $\ZZ^{\tilde{V}}$ 
$$
\begin{aligned}
\ZZ^{\tilde{V}}_\sym &:=\{x \in \ZZ^{\tilde{V}}: \iota(x) = x\},\\
\ZZ^{\tilde{V}}_\antisym &:=\{x \in \ZZ^{\tilde{V}}: \iota(x) = -x\}.
\end{aligned}
$$
\end{definition}

We collect in the next proposition the various necessary
technical properties of these maps $\pi_{(i)}$ and $\iota$.
\begin{proposition}
\label{technical-proposition}
Given $\tilde{G}_\pm =\Double(G^{(1)}_\pm, G^{(2)}_\pm)$, one has the
following properties of $\pi_{(i)}$ for $i=1,2$.
\begin{enumerate}
\item[(i)] $\pi_{(i)}( \im \del_{\tilde{G}_\pm} ) = \im \del_{G^{(i)}_\pm}$.
\item[(ii)] $\pi^t_{(i)}( \im \del_{G^{(i)}_\pm} ) \subset \im \del_{\tilde{G}_\pm}$.
\item[(iii)] $\pi_{(i)} ( \im L(\tilde{G}_\pm)  )
                =\im L(G^{(i)}_\pm ).$
\item[(iv)] 
This diagram commutes
$$
\begin{CD}
\ZZ^{\tilde{V}} @>{ L(\tilde{G}_\pm)}>>  \ZZ^{\tilde{V}} \\
@A{  \pi^t_{(i)} }AA  
    @AA{ \pi^t_{(i)} }A \\
\ZZ^V @>{  L(G^{(i)}_\pm) }>> \ZZ^V
\end{CD}
$$
\item[(v)] These two sequences are short exact:
$$
\begin{array}{rccl}
0 \longrightarrow &\ZZ^V 
   \overset{\pi_{(1)}^t}{\longrightarrow} &\ZZ^{\tilde{V}}
    \overset{\pi_{(2)}}{\longrightarrow} &\ZZ^V 
      \longrightarrow 0 \\
0 \longrightarrow &\ZZ^V 
   \overset{\pi_{(2)}^t}{\longrightarrow} &\ZZ^{\tilde{V}}
    \overset{\pi_{(1)}}{\longrightarrow} &\ZZ^V 
      \longrightarrow 0 
\end{array}
$$
with 
$$
\begin{array}{rcl}
\im \pi_{(1)}^t &= \ker \pi_{(2)} &= \ZZ^{\tilde{V}}_\sym \\
\im \pi_{(2)}^t &= \ker \pi_{(1)} &= \ZZ^{\tilde{V}}_\antisym \\
\end{array}
$$
\item[(vi)] As operators on $\ZZ^{\tilde{V}}$, the
map $\iota$ commutes with $L(\tilde{G}_\pm)$.
\end{enumerate}
\end{proposition}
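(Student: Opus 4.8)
The plan is to deduce (vi) formally from parts (iv) and (v), avoiding any fresh case analysis over the edge types in the $\Double$ construction. The key observation is that the involution $\iota$ has $+1$- and $-1$-eigenlattices that are exactly the two sublattices appearing in (v): by definition $\ZZ^{\tilde{V}}_\sym$ and $\ZZ^{\tilde{V}}_\antisym$ are the fixed and anti-fixed lattices of $\iota$, and (v) identifies them with $\im \pi^t_{(1)}$ and $\im \pi^t_{(2)}$ respectively.

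First I would record that $L(\tilde{G}_\pm)$ preserves each of these two lattices. This is immediate from the commuting square in (iv): the relation $L(\tilde{G}_\pm)\,\pi^t_{(i)} = \pi^t_{(i)}\,L(G^{(i)}_\pm)$ shows that $L(\tilde{G}_\pm)\big(\im \pi^t_{(i)}\big) \subseteq \im \pi^t_{(i)}$ for $i=1,2$. Via (v) this says precisely that $L(\tilde{G}_\pm)$ carries $\ZZ^{\tilde{V}}_\sym$ into itself and $\ZZ^{\tilde{V}}_\antisym$ into itself.

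Next I would check the elementary fact that an integer operator preserving both eigenlattices of an involution commutes with that involution. Concretely: for $x \in \ZZ^{\tilde{V}}_\sym$ we have $L(\tilde{G}_\pm)x \in \ZZ^{\tilde{V}}_\sym$, so $\iota$ fixes both $x$ and $L(\tilde{G}_\pm)x$, giving $\iota L(\tilde{G}_\pm)x = L(\tilde{G}_\pm)x = L(\tilde{G}_\pm)\iota x$; for $x \in \ZZ^{\tilde{V}}_\antisym$ both vectors are negated by $\iota$, and again $\iota L(\tilde{G}_\pm)x = -L(\tilde{G}_\pm)x = L(\tilde{G}_\pm)\iota x$. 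Thus $\iota L(\tilde{G}_\pm) - L(\tilde{G}_\pm)\iota$ vanishes on $\ZZ^{\tilde{V}}_\sym + \ZZ^{\tilde{V}}_\antisym$. Since $v_+ = \tfrac12\big((v_++v_-)+(v_+-v_-)\big)$, this sum has finite index in $\ZZ^{\tilde{V}}$ and in particular spans $\ZZ^{\tilde{V}}\otimes\QQ$; as $\iota L(\tilde{G}_\pm) - L(\tilde{G}_\pm)\iota$ is an integer matrix vanishing on a $\QQ$-spanning set, it is identically zero.

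I expect no serious obstacle along this route; the only point that needs care is the last step, where ``preserves both eigenlattices'' must be upgraded to ``commutes'' — this is why I pass to $\QQ$-coefficients and invoke the finite-index span rather than trying to decompose $\ZZ^{\tilde{V}}$ directly as an internal direct sum of the $\iota$-eigenlattices (which it is not). An alternative, more hands-on proof would observe directly that $\iota$ is a signed-graph automorphism of $\tilde{G}_\pm$ realized by the vertex permutation $v_+\leftrightarrow v_-$, so that it preserves the Laplacian entries computed by the formula in the earlier proposition; there the one delicate case is the half-loop on which $G^{(1)}_\pm, G^{(2)}_\pm$ disagree, covered by a single edge $(v_+,v_-)$ sent to itself by $\iota$, where one must confirm this edge together with its sign is genuinely fixed. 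The eigenlattice argument is cleaner precisely because it sidesteps this bookkeeping.
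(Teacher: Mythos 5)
Your proposal establishes only part (vi), and it does so by \emph{assuming} parts (iv) and (v); nothing in it proves (i)--(v), which are the real content of the proposition. In the paper, (i)--(iv) are where the work lies: one introduces edge-level maps $\ZZ^{\tilde{E}} \rightleftarrows \ZZ^{E}$ (the forgetful map $\pi_{(1)}$ and a companion map $\rho_{(1)}$ that is close to, but not equal to, $\pi_{(1)}^t$) fitting into commutative squares with $\del$ and $\delT^t$; then (ii) and (iv) are read off from, or obtained by gluing, those squares, while (i) and (iii) require upgrading an inclusion to an equality. That upgrade hides a genuine subtlety your plan never confronts: the edge map $\ZZ^{\tilde{E}} \overset{\pi_{(1)}}{\longrightarrow} \ZZ^{E}$ is \emph{not} surjective --- the basis elements of $\ZZ^E$ indexed by half-loops that are positive in $G^{(1)}_\pm$ and covered by a single edge of $\tilde{G}_\pm$ are missed --- and equality in (i) survives only because those missed basis elements lie in $\ker \del_{G^{(1)}_\pm}$, so that $\del_{G^{(1)}_\pm}(\im \pi_{(1)}) = \del_{G^{(1)}_\pm}(\ZZ^E)$ nonetheless. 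The paper also reduces the case $i=2$ to $i=1$ via Proposition~\ref{asymmetry-illusory-prop} (switching at all vertices $v_-$), another ingredient absent from your outline. Part (v) is routine, but it is an input to your argument and still has to be checked rather than cited.

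On the other hand, your deduction of (vi) from (iv) and (v) is correct and genuinely different from the paper's. The paper disposes of (vi) in one sentence: $\iota$ is a signed-graph automorphism of $\tilde{G}_\pm$ involving no sign switches, only a permutation of coordinates, hence commutes with $L(\tilde{G}_\pm)$. Your route --- (iv) gives $L(\tilde{G}_\pm)\,\pi^t_{(i)} = \pi^t_{(i)}\,L(G^{(i)}_\pm)$, so $L(\tilde{G}_\pm)$ preserves $\im \pi^t_{(1)} = \ZZ^{\tilde{V}}_\sym$ and $\im \pi^t_{(2)} = \ZZ^{\tilde{V}}_\antisym$ by (v), and an integer operator preserving both eigenlattices of the involution has commutator with $\iota$ vanishing on the finite-index sublattice $\ZZ^{\tilde{V}}_\sym + \ZZ^{\tilde{V}}_\antisym$, hence vanishing identically --- is sound, including the care taken in passing to $\QQ$-coefficients. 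But it buys little here: (iv) itself demands exactly the edge-type bookkeeping (in particular the half-loop cases) that you are trying to sidestep, so the case analysis is not avoided, merely relocated to parts you left unproven. To repair the proposal, supply proofs of (i)--(v).
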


\begin{proof}
By Proposition~\ref{asymmetry-illusory-prop}, it 
suffices to check the assertions for $\pi_{(1)}$; the assertions for 
$\pi_{(2)}$ will then follow by applying sign switches at all
vertices $\{v_-\}_{v \in V}$ of $\tilde{G}_\pm$.

In proving assertions (i),(ii),(iii),(iv),
it is convenient to introduce two maps
$\ZZ^{\tilde{E}} \overset{\pi_{(1)}}{\underset{\rho_{(1)}}{\rightleftarrows}} \ZZ^E$
defined as follows:
$$
\begin{array}{rccl}
\ZZ^{\tilde{E}} & \overset{\pi_{(1)}}{\longrightarrow} & \ZZ^E& \\
e_+,e_- & \longmapsto  & e &\text{ if }e\text{ has two preimages }e_+,e_i\text{ in }\tilde{E} \\
\tilde{e}
  & \longmapsto  & e &\text{ if }e\text{ is a half-loop with preimage }\tilde{e},\text{ and voltages }\beta(e)=-1,+1\text{ in }G^{(1)}_\pm,G^{(2)}_\pm,\text{ resp.}\\
\tilde{e}  & \longmapsto  & 0 &\text{ if }e\text{ is a half-loop with preimage }\tilde{e},\text{ and voltages }\beta(e)=+1,-1\text{ in }G^{(1)}_\pm,G^{(2)}_\pm,\text{ resp.}\\
 & & & \\
\ZZ^{E} & \overset{\rho_{(1)}}{\longrightarrow} & \ZZ^{\tilde{E}}& \\
e & \longmapsto  & e_++e_- &\text{ if }e\text{ has two preimages }e_+,e_i\text{ in }\tilde{E} \\
e
  & \longmapsto  & 2\tilde{e} &\text{ if }e\text{ is a half-loop with preimage }\tilde{e},\text{ with voltages }\beta(e)=-1,+1\text{ in }G^{(1)}_\pm,G^{(2)}_\pm,\text{ resp.}\\
e  & \longmapsto  & 0 &\text{ if }e\text{ is a half-loop with preimage }\tilde{e},\text{ with voltages }\beta(e)=+1,-1\text{ in }G^{(1)}_\pm,G^{(2)}_\pm,\text{ resp.}
\end{array}
$$
Note that $\ZZ^E \overset{\rho_{(1)}}{\rightarrow} \ZZ^{\tilde{E}}$ is close, 
but not quite equal, to the transpose $\pi_{(1)}^t$ of the
map $\ZZ^{\tilde{E}} \overset{\pi_{(1)}}{\rightarrow} \ZZ^E$.
These maps $\pi_{(1)}, \rho_{(1)}$ 
between edge lattices correspond to the maps
$\pi_{(1)}, \pi^t_{(1)}$ already defined between vertex lattices,
in the sense that one has these easily-checked commutative diagrams:
\begin{equation}
\label{edge-lift-of-pi-commutative-diagrams}
\begin{CD}
\ZZ^{\tilde{V}} @>{ \delta^t_{\tilde{G}_\pm} }>>  \ZZ^{\tilde{E}} \\
@V{  \pi_{(1)} }VV  
    @VV{ \pi_{(1)} }V \\
\ZZ^V @>{  \delta^t_{G^{(1)}_\pm } }>> \ZZ^E
\end{CD}
\qquad \qquad \qquad
\begin{CD}
\ZZ^{\tilde{E}} @>{ \partial_{\tilde{G}_\pm} }>>  \ZZ^{\tilde{V}} \\
@V{  \pi_{(1)} }VV  
    @VV{ \pi_{(1)} }V \\
\ZZ^E @>{  \partial_{G^{(1)}_\pm } }>> \ZZ^V
\end{CD}
\end{equation}
and
\begin{equation}
\label{edge-lift-of-pi-transpose-commutative-diagrams}
\begin{CD}
\ZZ^{\tilde{V}} @>{ \delta^t_{\tilde{G}_\pm} }>>  \ZZ^{\tilde{E}} \\
@A{  \pi^t_{(1)} }AA  
    @AA{ \rho_{(1)} }A \\
\ZZ^V @>{  \delta^t_{G^{(1)}_\pm } }>> \ZZ^E
\end{CD}
\qquad \qquad \qquad
\begin{CD}
\ZZ^{\tilde{E}} @>{ \partial_{\tilde{G}_\pm} }>>  \ZZ^{\tilde{V}} \\
@A{  \rho_{(1)} }AA  
    @AA{ \pi^t_{(1)} }A \\
\ZZ^E @>{  \partial_{G^{(1)}_\pm } }>> \ZZ^V.
\end{CD}
\end{equation}

\vskip.1in
\noindent
{\sf Assertion (ii).}
This follows immediately from the right commutative square
in \eqref{edge-lift-of-pi-transpose-commutative-diagrams}.

\vskip.1in
\noindent
{\sf Assertion (i).}
The weaker inclusion 
$\pi_{(1)}( \im \del_{\tilde{G}_\pm} ) \subseteq \im \del_{G^{(1)}_\pm}$
similarly follows immediately from the right
commutative square in \eqref{edge-lift-of-pi-commutative-diagrams}.
One wants to show that this inclusion is an equality, which would
follow if $\ZZ^{\tilde{E}} \overset{\pi_{(1)}}{\longmapsto} \ZZ^{E}$ 
were surjective.
Although it is {\it not} necessarily surjective,  
the only basis elements in $\mb{Z}^{E}$ not in the
image of $\pi_{(1)}$ correspond to half loops $e$ 
which are positive in $G^{(1)}_\pm$ and are covered by a single edge in $\tilde{G}_\pm$,
and these elements lie in the kernel of $\partial_{G^{(1)}_\pm }$.
Therefore $\partial_{G^{(1)}_\pm }(\im \pi_{(1)})=\partial_{G^{(1)}_\pm }(\ZZ^E)$, 
and the equality follows.

\vskip.1in
\noindent
{\sf Assertion (iv).}
This follows immediately from the commutative square 
$$
\begin{CD}
\ZZ^{\tilde{V}} @>{ L(\tilde{G}_\pm) }>>  \ZZ^{\tilde{V}} \\
@A{  \pi^t_{(1)} }AA  
    @AA{ \pi^t_{(1)} }A \\
\ZZ^V @>{  L(G^{(1)}_\pm) }>> \ZZ^V
\end{CD}
$$
obtained by gluing the two commutative squares
in \eqref{edge-lift-of-pi-transpose-commutative-diagrams} along their
common vertical edge $\ZZ^{E} \overset{\rho_{(1)}}{\rightarrow} \ZZ^{\tilde{E}}$, 
and then composing the horizontal maps in the top
and bottom rows.

\vskip.1in
\noindent
{\sf Assertion (iii).}
The weaker inclusion 
$\pi_{(1)} ( \im L(\tilde{G}_\pm)  )
\subseteq \im L(G^{(1)}_\pm )$
similarly follows from this square 
$$
\begin{CD}
\ZZ^{\tilde{V}} @>{ L(\tilde{G}_\pm) }>>  \ZZ^{\tilde{V}} \\
@V{  \pi_{(1)} }VV  
    @VV{ \pi_{(1)} }V \\
\ZZ^V @>{  L(G^{(1)}_\pm ) }>> \ZZ^V
\end{CD}
$$
obtained by gluing the two commutative squares in 
\eqref{edge-lift-of-pi-commutative-diagrams} along their common vertical edge.
One wants to show that this inclusion is an equality, but this 
follows from the fact that
$\pi_{(1)}:\ZZ^{\tilde{V}}\mapsto \ZZ^{V}$ is indeed
surjective.

\vskip.1in
\noindent
{\sf Assertion (v).}
This is a completely straightforward verification, left to the reader.

\vskip.1in
\noindent
{\sf Assertion (vi).}
This follows because $\iota$ is a 
signed graph automorphism of $\tilde{G}_\pm$ that involves no sign switches,
only permutations of the coordinates.
\end{proof}

\begin{corollary}
\label{signed-graph-double-cover-splitting} 
Given $\tilde{G}_\pm =\Double(G^{(1)}_\pm, G^{(2)}_\pm)$,
the maps 
$
\ZZ^{\tilde{V}}
\underset{\pi^t_{(i)}}{\overset{\pi_{(i)}}{\rightleftarrows}}
\ZZ^V
$ 
for $i=1,2$ induce morphisms
$$
K(\tilde{G}_\pm) 
\underset{\pi^t_{(i)}}{\overset{\pi_{(i)}}{\rightleftarrows}} 
K(G^{(i)}_\pm) 
$$
with the properties that 
\begin{enumerate}
\item[(a)] $\pi_{(i)}$ is surjective, and
\item[(b)] $\pi_{(i)} \pi^t_{(i)} = 2 \cdot \one_{K(G^{(i)}_\pm)}$.
\end{enumerate}
Hence the $p$-primary component $\Syl_p K(G^{(i)}_\pm)$
splits as a direct summand of $\Syl_p K(\tilde{G}_\pm)$ for odd primes $p$.
\end{corollary}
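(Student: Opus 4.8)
The plan is to work throughout with the vertex-presentation $K(G_\pm) = \im \del / \im L(G_\pm)$, since Proposition~\ref{technical-proposition} was phrased precisely to control the two lattices $\im \del$ and $\im L$ under the maps $\pi_{(i)}$ and $\pi^t_{(i)}$. First I would check that each map descends to critical groups. For $\pi_{(i)}$, parts (i) and (iii) of Proposition~\ref{technical-proposition} give $\pi_{(i)}(\im \del_{\tilde{G}_\pm}) = \im \del_{G^{(i)}_\pm}$ and $\pi_{(i)}(\im L(\tilde{G}_\pm)) = \im L(G^{(i)}_\pm)$, so $\pi_{(i)}$ carries both the numerator and the relation lattice of $K(\tilde{G}_\pm)$ into those of $K(G^{(i)}_\pm)$, inducing a well-defined homomorphism $K(\tilde{G}_\pm) \to K(G^{(i)}_\pm)$. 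For $\pi^t_{(i)}$, part (ii) gives $\pi^t_{(i)}(\im \del_{G^{(i)}_\pm}) \subseteq \im \del_{\tilde{G}_\pm}$, while the commuting square of part (iv) yields $\pi^t_{(i)}(\im L(G^{(i)}_\pm)) = \pi^t_{(i)}(L(G^{(i)}_\pm)\ZZ^V) = L(\tilde{G}_\pm)(\pi^t_{(i)}\ZZ^V) \subseteq \im L(\tilde{G}_\pm)$, so $\pi^t_{(i)}$ induces $K(G^{(i)}_\pm) \to K(\tilde{G}_\pm)$.

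Claim (a) is then immediate: the surjectivity of the induced $\pi_{(i)}$ follows from the \emph{equality} $\pi_{(i)}(\im \del_{\tilde{G}_\pm}) = \im \del_{G^{(i)}_\pm}$ in part (i), since the numerator of $K(\tilde{G}_\pm)$ already maps onto the numerator of $K(G^{(i)}_\pm)$.

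For claim (b) I would compute the composite on the nose at the level of $\ZZ^V$, before passing to critical groups, using the explicit formulas for $\pi_{(i)}$ and $\pi^t_{(i)}$ on basis elements from the preceding definition. For $i=1$ one has $\pi_{(1)}\pi^t_{(1)}(v) = \pi_{(1)}(v_+ + v_-) = v + v = 2v$, and for $i=2$ one has $\pi_{(2)}\pi^t_{(2)}(v) = \pi_{(2)}(v_+ - v_-) = v - (-v) = 2v$; thus $\pi_{(i)}\pi^t_{(i)} = 2\cdot\one_{\ZZ^V}$ in both cases, and this identity descends to $\pi_{(i)}\pi^t_{(i)} = 2\cdot\one_{K(G^{(i)}_\pm)}$.

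Finally, the splitting is the same formal argument already run in Proposition~\ref{covering-splitting-proposition}: on an odd $p$-primary component multiplication by $2$ is an automorphism, so $2^{-1}\pi_{(i)}$ is a left inverse to $\pi^t_{(i)}$ restricted to $\Syl_p K(G^{(i)}_\pm)$, exhibiting it as a direct summand of $\Syl_p K(\tilde{G}_\pm)$. I expect no serious obstacle, since Proposition~\ref{technical-proposition} has already packaged the delicate lattice bookkeeping (in particular the half-loop cases in (i) and (iii)); the only point needing care is confirming that the maps respect the numerator $\im \del$ of the vertex-presentation rather than all of $\ZZ^V$, which is exactly what parts (i)--(iv) supply.
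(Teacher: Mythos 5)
Your proposal is correct and follows essentially the same route as the paper's proof: descent to critical groups via Proposition~\ref{technical-proposition}(i)--(iv) in exactly the pairing you use, surjectivity from the equality (not mere inclusion) in (i), the identity $\pi_{(i)}\pi^t_{(i)} = 2\cdot\one_{\ZZ^V}$ verified at the level of $\ZZ^V$ and then descended, and the formal odd-prime splitting as in Proposition~\ref{covering-splitting-proposition}. The only difference is that you spell out the basis-element computation for (b) and the derivation of $\pi^t_{(i)}(\im L(G^{(i)}_\pm)) \subseteq \im L(\tilde{G}_\pm)$ from the commuting square, which the paper leaves as a remark that one ``readily checks.''
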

\begin{proof}
The fact that they induce morphisms follows from the first four assertions of 
Proposition~\ref{technical-proposition}:  one first must
check that they preserve the appropriate sublattices $\im \del_{G_\pm}$, which follow from
(i),(ii), and then that they preserve the further sublattices $\im L(G_\pm)$,
which follow from (iii), (iv).  
The assertion (a) of surjectivity for $\pi_{(i)}$ also 
follows from this, because (i) asserts an equality, not just an inclusion.
To prove assertion (b), one readily checks that one has the same equation  
$\pi_{(i)} \pi^t_{(i)} = 2 \cdot \one_{\ZZ^V}$ already
as operators on $\ZZ^V$.
\end{proof}

Our goal is to be much more precise about 
the kernels of the surjections $\pi_{(i)}$
in Corollary~\ref{signed-graph-double-cover-splitting}.
For this it is convenient to assume that the multigraph
$G$ underlying both $G^{(1)}_\pm, G^{(2)}_\pm$ is connected.  As with 
unsigned graph coverings, this is
a harmless assumption:  whenever $G$ has a nontrivial decomposition
into connected components, there is a corresponding decomposition for the
double cover, and the maps $\pi_{(i)}$ correspondingly
decompose as direct sums.

When $G$ is connected, we will make some
further preparatory assumptions about the connected
component structure of $\tilde{G}_\pm$, beginning with
the following observation.

\begin{proposition}
If $G$ is connected, the underlying multigraph $\tilde{G}$ of
$\tilde{G}_\pm$ can have at most two connected components,
and when there are two components, they are exchanged by
the involutive automorphism $\iota$.
\end{proposition}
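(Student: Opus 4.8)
The plan is to exploit the path-lifting property of the projection $\tilde{G} \to G$ that forgets subscripts, together with the fact that $\iota$ is a graph automorphism of the underlying multigraph $\tilde{G}$.

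First I would record the local structure of the cover. Inspecting the two cases of Definition~\ref{signed-graph-double-cover-definition}, for every edge $e=(u,v)$ of $G$ with $u \neq v$ and for each of the two lifts $u_+,u_-$ of $u$, there is precisely one edge of $\tilde{G}$ joining it to a lift of $v$: when $G^{(1)}_\pm, G^{(2)}_\pm$ agree on $e$ the lift $u_\pm$ joins $v_\pm$, and when they disagree $u_\pm$ joins $v_\mp$. (A half-loop with disagreeing signs contributes the single edge $\tilde{e}=(v_+,v_-)$, which merely joins the two vertices in the fibre over $v$.) Consequently any edge-path in $G$ starting at a vertex $w$ lifts, from either chosen lift of $w$, to an edge-path in $\tilde{G}$.

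Next, fixing a base vertex $v_0 \in V$, I would show that every vertex of $\tilde{G}$ lies in the connected component of $(v_0)_+$ or of $(v_0)_-$. Given any vertex $w_\pm$, connectedness of $G$ supplies an edge-path from $v_0$ to $w=\pi(w_\pm)$; lifting it from $(v_0)_+$ yields a path in $\tilde{G}$ ending at one of $w_+, w_-$, while applying $\iota$ to that lifted path gives a path from $(v_0)_-$ to the other of $w_+, w_-$. Hence both $w_+$ and $w_-$ are joined to one of $(v_0)_+, (v_0)_-$, so $\tilde{G}$ has at most the two components containing these two vertices. Finally I would invoke $\iota$: it permutes $\tilde{V}$ by $v_+ \leftrightarrow v_-$ and carries each edge to an edge (checked casewise on Definition~\ref{signed-graph-double-cover-definition}, and already noted in Proposition~\ref{technical-proposition}(vi)), so it is a graph automorphism and thus permutes components. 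If $(v_0)_+$ and $(v_0)_-$ lie in one component, $\tilde{G}$ is connected; otherwise there are exactly two components, and since $\iota((v_0)_+)=(v_0)_-$ the involution $\iota$ sends the component of $(v_0)_+$ onto that of $(v_0)_-$, so being an involution of a two-element set it must exchange them.

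I do not anticipate a serious obstacle; the only point demanding care is that $\tilde{G}\to G$ is not literally a $2$-sheeted covering at a half-loop that is doubly covered by the single edge $\tilde{e}$. But this case only helps the argument: such an edge links the two vertices of a fibre and hence forces the two candidate components to coincide, leaving the path-lifting bookkeeping intact.
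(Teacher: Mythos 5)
Your proof is correct and follows essentially the same route as the paper's: fix a base vertex, lift paths from $G$ using its connectedness to show every vertex of $\tilde{G}$ lies in the component of one of the two lifts, and then use the automorphism $\iota$ to exchange the two components when they are distinct. Your write-up is simply more explicit about the path-lifting bookkeeping (including the half-loop cases), which the paper leaves implicit.
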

\begin{proof}
Fix a base vertex $v$ of $G$, with two lifts $v_+$ or to $v_-$.  
Since every other vertex $u$ of $G$ has a path to $v$ in $G$,
every vertex $u_+, u_-$ in $\tilde{G}$ either has a lifted path
to $v_+$ or to $v_-$ or to both, and hence lies in the component
of one (or both) of $v_+,v_-$.  Note also that $\iota$ must
send the component of $v_+$ to the one of $v_-$.
\end{proof}

This leaves three cases for $\tilde{G}_\pm$ if $G$ is connected:
\begin{enumerate}
\item[{\sf Case 1.}]
The signed graph $\tilde{G}_\pm$ is connected and unbalanced.
\item[{\sf Case 2.}]
The signed graph $\tilde{G}_\pm$ is connected and balanced.
\item[{\sf Case 3.}]
The signed graph $\tilde{G}_\pm$ has two connected components,
exchanged by $\iota$.
\end{enumerate}

In Case 3, we claim one can perform a sequence
of switches at various vertices $v$ of $G^{(2)}_\pm$,
with the effect of exchanging the labels $v_+ \leftrightarrow v_-$
in $\tilde{G}_\pm=\Double(G^{(1)}_\pm,G^{(2)}_\pm)$, until
the two connected components of $\tilde{G}$ have
vertex sets $\{v_+\}_{v \in V}$ and $\{v_-\}_{v \in V}$.
In other words, one can take
$G^{(2)}_\pm = G^{(1)}_\pm$ without loss of generality,
so $\tilde{G}_\pm$ is the {\it disjoint union} 
$G^{(1)}_\pm \sqcup G^{(1)}_\pm$.  
{\it We tacitly make this assumption
whenever in Case 3.}

Although not obvious, we also claim that in Case 2, 
one or the other of $G^{(1)}_\pm$ or $G^{(2)}_\pm$ (but not both)
must be balanced, that is, signed isomorphic to an unsigned
graph; this is proven in Proposition~\ref{Case2-reduction-prop} below.
Hence by swapping their roles, we may assume
that $G^{(1)}_\pm$ is signed isomorphic to an unsigned graph.
One can then perform switches at various vertices $v$ of $G^{(1)}_\pm$,
and accompanying switches at both $v_+,v_-$ in $\tilde{G}_{\pm}$,
so that $G_\pm^{(1)}$ and $\tilde{G}_\pm$ are both unsigned.
{\it We tacitly make this assumption
whenever in Case 2.}

\begin{proposition}
\label{Case2-reduction-prop}
Given $\tilde{G}_\pm =\Double(G^{(1)}_\pm, G^{(2)}_\pm)$, 
create a third signed graph $G^{(1,2)}_\pm$ with same underlying graph $G$ as
$G^{(i)}_\pm$ for $i=1,2$, having voltage assignment
$\beta_{(1,2)}(e) = \beta_{(1)}(e)\beta_{(2)}(e)$ for each $e$ in $E$.

Assuming $G^{(1)}_\pm, G^{(2)}_\pm$ are both unbalanced, then either 
\begin{enumerate}
\item[$\bullet$] $\tilde{G}_\pm$ has two components, if
$G^{(1,2)}_\pm$ is balanced (so we are in a subcase of Case 3), or
\item[$\bullet$] $\tilde{G}_\pm$ is unbalanced,
if $G^{(1,2)}_\pm$ is unbalanced (so we are in Case 1).
\end{enumerate}

In particular, $G^{(1)}_\pm, G^{(2)}_\pm$ both being unbalanced excludes being in Case 2.
\end{proposition}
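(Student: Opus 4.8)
The plan is to split the proposition into its two implications, both controlled by the balance of $G^{(1,2)}_\pm$, and to deal first with the connectivity of the underlying multigraph $\tilde{G}$, so that the unbalancedness claim reduces to a clean rank comparison. The backbone of the argument is the dichotomy: \emph{$\tilde{G}$ is connected if and only if $G^{(1,2)}_\pm$ is unbalanced} (and has two components otherwise). This one statement yields the first bullet and reduces the second bullet to the connected case.

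First I would prove this connectivity dichotomy. By the preceding proposition $\tilde{G}$ has at most two components, exchanged by $\iota$ when there are two; hence $\tilde{G}$ is connected if and only if $w_+$ and $w_-$ lie in a common component for some vertex $w$. If $G^{(1,2)}_\pm$ is unbalanced, choose an unbalanced cycle of $G^{(1,2)}_\pm$: a negative half-loop or negative full loop at $w$ produces, via the $\Double$ construction, an edge directly joining $w_+$ to $w_-$, whereas an unbalanced ordinary cycle lifts (starting at $w_+$) across an odd number of the edges where $G^{(1)}_\pm,G^{(2)}_\pm$ disagree, hence terminates at $w_-$; either way $w_+,w_-$ share a component and $\tilde{G}$ is connected. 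Conversely, if $G^{(1,2)}_\pm$ is balanced then $G^{(1)}_\pm$ and $G^{(2)}_\pm$ are switching-equivalent, so after switching $G^{(2)}_\pm$ into $G^{(1)}_\pm$ --- which, as noted in the Case~3 discussion, merely relabels $\tilde{V}$ and therefore preserves the number of components --- the cover becomes the disjoint union $G^{(1)}_\pm \sqcup G^{(1)}_\pm$, giving two components. This proves the first bullet.

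It then remains to show that when $G^{(1,2)}_\pm$ is unbalanced, so that $\tilde{G}$ is connected, the signed graph $\tilde{G}_\pm$ is unbalanced (Case~1). Here I would argue by contradiction using Proposition~\ref{technical-proposition}(i) together with Proposition~\ref{connected-signed-graph-im-of-del}. Since $G^{(1)}_\pm$ is unbalanced with connected underlying graph, $\im\del_{G^{(1)}_\pm}=\ZZ^V_{\equiv 0\bmod 2}$ has full rank $|V|$. If $\tilde{G}_\pm$ were balanced, then, $\tilde{G}$ being connected, $\im\del_{\tilde{G}_\pm}=\ZZ^{\tilde{V}}_{=0}$ would be the sum-zero lattice of rank $|\tilde{V}|-1$. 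But the map $\pi_{(1)}\colon v_\pm \mapsto v$ carries $\ZZ^{\tilde{V}}_{=0}$ onto $\ZZ^V_{=0}$, a lattice of rank $|V|-1$, whereas Proposition~\ref{technical-proposition}(i) forces $\pi_{(1)}(\im\del_{\tilde{G}_\pm})=\im\del_{G^{(1)}_\pm}=\ZZ^V_{\equiv 0\bmod 2}$ of rank $|V|$. This rank mismatch is a contradiction, so $\tilde{G}_\pm$ is unbalanced, establishing the second bullet. The closing ``in particular'' is then immediate, since Case~2 is precisely the connected-and-balanced case, which the two implications jointly exclude.

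The main obstacle I anticipate is the careful bookkeeping of loops and half-loops in the connectivity step. Switching leaves the signs of loops and half-loops fixed, and the $\Double$ construction turns a sign-disagreeing half-loop into a genuine cross edge; consequently the chain of equivalences ``$G^{(1,2)}_\pm$ balanced $\iff$ $G^{(1)}_\pm,G^{(2)}_\pm$ switching-equivalent $\iff$ all loops and half-loops agree and the non-loop disagreement cochain is a coboundary'' must be verified against these conventions rather than taken for granted. By contrast, once connectivity is in hand the unbalancedness claim is essentially automatic from the rank argument and needs no case analysis on loops, since every loop and half-loop is already funneled through $\del$ and through the previously established Proposition~\ref{technical-proposition}(i).
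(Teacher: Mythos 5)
Your proof is correct, but it takes a genuinely different route from the paper's, most notably in the second bullet. The paper handles the first bullet directly: if $G^{(1,2)}_\pm$ is balanced, any path in $\tilde{G}_\pm$ from $v_+$ to $v_-$ would project to an unbalanced cycle of $G^{(1,2)}_\pm$, so the fibers lie in distinct components; you instead factor through switching-equivalence, relabelling fibers to reduce to the disjoint union $G^{(1)}_\pm \sqcup G^{(1)}_\pm$. For the second bullet the paper runs a parity table of balance patterns for cycles of $G$ and combines an unbalanced cycle of $G^{(1)}_\pm$ with one of $G^{(2)}_\pm$ along a connecting path to manufacture a single cycle unbalanced in both $G^{(1)}_\pm$ and $G^{(2)}_\pm$, whose lift is then an explicit unbalanced cycle of $\tilde{G}_\pm$ --- a purely combinatorial witness argument. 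You instead make unbalancedness a soft consequence of the lattice machinery: after securing connectivity of $\tilde{G}$, you play Proposition~\ref{technical-proposition}(i), which gives $\pi_{(1)}(\im\del_{\tilde{G}_\pm})=\im\del_{G^{(1)}_\pm}=\ZZ^V_{\equiv 0\bmod 2}$ of full rank $|V|$, against the fact that balance of $\tilde{G}_\pm$ would force $\im\del_{\tilde{G}_\pm}=\ZZ^{\tilde{V}}_{=0}$, whose image under $\pi_{(1)}$ lands in $\ZZ^V_{=0}$ of rank $|V|-1$; the rank clash is a valid contradiction, and there is no circularity since Proposition~\ref{technical-proposition} is proved independently of this proposition. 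Your route needs connectivity of $\tilde{G}$ as an input, which your dichotomy supplies and which has the side benefit of making the parenthetical ``so we are in Case 1'' fully explicit (the paper leaves that connectivity point implicit); the paper's construction yields unbalancedness without any connectivity preprocessing and stays entirely at the level of signed-graph combinatorics. The one point you should pin down, as you yourself flag, is the pair of claims that $G^{(1,2)}_\pm$ balanced implies $G^{(1)}_\pm,G^{(2)}_\pm$ are switching-equivalent, and that switching $G^{(2)}_\pm$ at $v$ is realized on $\Double(G^{(1)}_\pm,G^{(2)}_\pm)$ by the relabelling $v_+\leftrightarrow v_-$ (hence preserves component counts): both are straightforward from Definition~\ref{signed-graph-double-cover-definition} and connectivity of $G$, but neither is literally the claim made in the paper's Case 3 discussion, which presupposes that two components are already present.
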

\begin{proof}
Note that the edges $e$ of $G$ having $\beta_{(1,2)}(e)$ negative are exactly 
the ones whose lifts in $\tilde{G}_\pm$ go across the vertex cut from
$\{v_+\}_{v \in V}$ to $\{v_-\}_{v \in V}$.  Thus whenever
$G^{(1,2)}_\pm$ is balanced, any vertex $v$ of $G$ will have its
two preimages $v_+,v_-$ in $\pi^{-1}(v)$ lying in
different components of $\tilde{G}_\pm$:  
the edges in a path from from $v_+$ to $v_-$ in $\tilde{G}_\pm$ would project to 
an unbalanced cycle for $G_{\pm}^{(1,2)}$.  Hence $\tilde{G}_\pm$ has two
components if $G^{(1,2)}_\pm$ is balanced.


If $G^{(1)}_\pm, G^{(2)}_\pm, G^{(1,2)}_\pm$ are all unbalanced, then we wish to show that
there is a cycle $C$ which is unbalanced for $\tilde{G}_\pm$.
This is the same as showing $C$ is
unbalanced for both $G^{(1)}_\pm$ and $G^{(2)}_\pm$ (and hence balanced
for $G^{(1,2)}_\pm$).  Parity considerations show that 
these are the only possible patterns of balance for cycles $C$ in $G$:

\vskip.1in

\begin{tabular}{|c|c|c|c|} \hline
in $G^{(1)}_\pm$ & in $G^{(2)}_\pm$ & in $G^{(1,2)}_\pm$& in $\tilde{G}_\pm$ \\ \hline\hline
balanced & balanced & balanced & balanced \\ \hline
balanced & unbalanced & unbalanced & (not a cycle) \\ \hline
unbalanced & balanced & unbalanced & (not a cycle) \\ \hline
unbalanced & unbalanced & balanced & unbalanced \\ \hline
\end{tabular}

\vskip.1in
\noindent
Note that, given two cycles $C_1,C_2$ in $G$, since $G$ is connected, 
one can create a third cycle $C_3$ going around $C_1$, following a 
path $P$ to $C_2$, then around $C_2$, and back along the reverse of $P$.  
This $C_3$ will have balance pattern the ``mod 2 sum'' of that for $C_1$ and
$C_2$, reading \{ ``balanced'' , ``unbalanced'' \} as $\{ 0,1 \}$ in $\ZZ_2$.

Now one can complete the argument that there exists a cycle $C$ in
$G$ unbalanced for $\tilde{G}_\pm$, that is, a cycle $C$ matching 
the fourth row of the
table.  We know $G^{(1)}_\pm$ contains some unbalanced cycle $C_1$
and $G^{(2)}_\pm$ contains some unbalanced cycle $C_2$.  One must either
have that one of the two cycles $C_1, C_2$ matches the fourth row of the table, 
in which case we are done,
or $C_1, C_2$ can be combined to create a $C_3$ matching the fourth row of the
table, and again we are done.
\end{proof}

We can now prove the last main result, generalizing 
Theorem~\ref{unsigned-double-cover-theorem} to signed graph double covers.
\begin{theorem}
\label{signed-graph-double-cover-complex}
Given $\tilde{G}_\pm =\Double(G^{(1)}_\pm, G^{(2)}_\pm)$ with underlying
multigraph connected, the maps $\pi^t_{(1)}, \pi_{(2)}$ from 
Corollary~\ref{signed-graph-double-cover-splitting}
fit in a short complex
\begin{equation}
\label{signed-graph-short-complex}
0 \rightarrow K(G^{(1)}_\pm)
    \overset{\pi^t_{(1)}}{\longrightarrow} K(\tilde{G}_\pm)
     \overset{\pi_{(2)}}{\longrightarrow} K(G^{(2)}_\pm)
       \rightarrow 0
\end{equation}
which 
\begin{enumerate}
\item[$\bullet$] in Case 3, is split exact,
\item[$\bullet$] in Case 2, is short exact, and
\item[$\bullet$] in Case 1, is exact at the two ends,
but has homology at the middle term equal to $\ZZ_2$.
\end{enumerate}
In particular, in every case, for all odd primes $p$ one has the splitting
\begin{equation}
\label{odd-prime-splitting}
\Syl_p  K(\tilde{G}_\pm) =
\Syl_p  K(G^{(1)}_\pm) \oplus \Syl_p  K(G^{(2)}_\pm).
\end{equation}
\end{theorem}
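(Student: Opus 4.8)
The plan is to establish four structural facts in order—that the sequence is a complex, right-exactness, left-exactness, and the size of the middle homology—and then read off the odd-prime splitting from the involution $\iota$. First I would reduce to the case where the underlying multigraph $G$ is connected, since a decomposition of $G$ into components induces a compatible direct-sum decomposition of all three critical groups and of the maps $\pi^t_{(1)},\pi_{(2)}$. That \eqref{signed-graph-short-complex} is a complex is immediate: by Proposition~\ref{technical-proposition}(v) one has $\im \pi^t_{(1)} = \ker \pi_{(2)} = \ZZ^{\tilde V}_\sym$ already on vertex lattices, so the induced composite $\pi_{(2)}\circ\pi^t_{(1)}$ vanishes on $K(\tilde G_\pm)$; right-exactness (surjectivity of $\pi_{(2)}$) is exactly Corollary~\ref{signed-graph-double-cover-splitting}(a). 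For left-exactness, suppose $a\in\im\del_{G^{(1)}_\pm}$ with $\pi^t_{(1)}(a)=L(\tilde G_\pm)(y)$. Applying $\iota$ and using that $\pi^t_{(1)}(a)$ is $\iota$-symmetric while $\iota$ commutes with $L(\tilde G_\pm)$ (Proposition~\ref{technical-proposition}(vi)) gives $\iota y - y\in\ker L(\tilde G_\pm)$. In Case~1, $\tilde G_\pm$ is connected unbalanced, so $L(\tilde G_\pm)$ is nonsingular (Proposition~\ref{matrix-tree-like-prop}) and $\iota y = y$; in Case~2 the tacit reduction makes $\tilde G_\pm$ a connected unsigned graph, so $\ker L(\tilde G_\pm)=\ZZ\cdot\one_{\tilde V}$ is $\iota$-fixed, and from $\iota(\iota y - y)=-(\iota y - y)$ together with this $\iota$-invariance one gets $2(\iota y-y)=0$, hence $\iota y=y$. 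Either way $y\in\ZZ^{\tilde V}_\sym=\im\pi^t_{(1)}$, and writing $y=\pi^t_{(1)}(z)$, the intertwining $L(\tilde G_\pm)\pi^t_{(1)}=\pi^t_{(1)}L(G^{(1)}_\pm)$ of Proposition~\ref{technical-proposition}(iv) with injectivity of $\pi^t_{(1)}$ on lattices forces $a=L(G^{(1)}_\pm)(z)$; Case~3 follows from the direct splitting below.

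The engine for the middle homology is a block decomposition of the Laplacian. Ordering $\tilde V=\{v_+\}_{v\in V}\sqcup\{v_-\}_{v\in V}$, the commutation of $\iota$ with $L(\tilde G_\pm)$ forces the block form $L(\tilde G_\pm)=\left(\begin{smallmatrix}P&Q\\Q&P\end{smallmatrix}\right)$. Feeding the basis vectors $v_++v_-$ and $v_+-v_-$ into the intertwining relations of Proposition~\ref{technical-proposition}(iv), for $\pi^t_{(1)}$ and (via Proposition~\ref{asymmetry-illusory-prop}) for $\pi^t_{(2)}$, identifies $P+Q=L(G^{(1)}_\pm)$ and $P-Q=L(G^{(2)}_\pm)$. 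The standard block row/column reduction then gives, over $\RR$, the similarity of $L(\tilde G_\pm)$ with $L(G^{(1)}_\pm)\oplus L(G^{(2)}_\pm)$; in particular the eigenvalue multisets (and the nullities) add, and $\det L(\tilde G_\pm)=\det L(G^{(1)}_\pm)\cdot\det L(G^{(2)}_\pm)$.

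Since \eqref{signed-graph-short-complex} is a complex that is left- and right-exact with all terms finite, its middle homology $H$ satisfies $|K(\tilde G_\pm)|=|K(G^{(1)}_\pm)|\cdot|K(G^{(2)}_\pm)|\cdot|H|$, so it suffices to compare cardinalities. In Case~1 all three graphs are connected and unbalanced, so each critical group has order $\tfrac12\det L$ by the proof of Proposition~\ref{matrix-tree-like-prop}, and the determinant identity gives $2|K(\tilde G_\pm)|=(2|K(G^{(1)}_\pm)|)(2|K(G^{(2)}_\pm)|)$, whence $|H|=2$ and $H\cong\ZZ_2$. In Case~2 exactly one of $G^{(1)}_\pm,G^{(2)}_\pm$ is balanced (both balanced would force $G^{(1,2)}_\pm$ balanced and hence Case~3 by Proposition~\ref{Case2-reduction-prop}); after reduction $G^{(1)}_\pm$ and $\tilde G_\pm$ are connected unsigned and $G^{(2)}_\pm$ is connected unbalanced, so combining the Matrix-Tree count $|K|=\tfrac{1}{|V|}\prod_{\lambda\neq 0}\lambda$ for $\tilde G_\pm,G^{(1)}_\pm$ (noting $|\tilde V|=2|V|$) with $|K(G^{(2)}_\pm)|=\tfrac12\det L(G^{(2)}_\pm)$ and the eigenvalue matching yields $|K(\tilde G_\pm)|=|K(G^{(1)}_\pm)|\cdot|K(G^{(2)}_\pm)|$, so $H=0$ and the sequence is short exact. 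In Case~3 the tacit reduction makes $\tilde G_\pm=G^{(1)}_\pm\sqcup G^{(1)}_\pm$ with $G^{(2)}_\pm=G^{(1)}_\pm$, so $K(\tilde G_\pm)\cong K(G^{(1)}_\pm)^{\oplus 2}$ with $\pi^t_{(1)}$ the diagonal and $\pi_{(2)}$ the difference map, and the section $c\mapsto(c,0)$ exhibits split exactness.

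Finally, the splitting \eqref{odd-prime-splitting} follows uniformly from $\iota$: on vertex lattices one checks $\pi^t_{(1)}\pi_{(1)}=1+\iota$ and $\pi^t_{(2)}\pi_{(2)}=1-\iota$, while $\pi_{(i)}\pi^t_{(i)}=2\cdot\one$ by Corollary~\ref{signed-graph-double-cover-splitting}(b). For odd $p$ the element $2$ is invertible on $\Syl_p K(\tilde G_\pm)$, so $\iota$ decomposes it into $\pm1$-eigenspaces $K^+\oplus K^-$, and these relations show that $\pi^t_{(1)},\pi^t_{(2)}$ restrict to isomorphisms $\Syl_p K(G^{(1)}_\pm)\xrightarrow{\sim}K^+$ and $\Syl_p K(G^{(2)}_\pm)\xrightarrow{\sim}K^-$. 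I expect the main obstacle to be the case-dependent cardinality bookkeeping: one must correctly track the index-two factor $\tfrac12$ in the unbalanced count against the $\tfrac{1}{|V|}$ in the balanced Matrix-Tree count and confirm that the eigenvalue matching from the block identity combines them into exactly the factor $|H|\in\{1,2\}$; the other delicate point is securing left-exactness at the prime $2$ through the kernel-of-$L(\tilde G_\pm)$ parity argument.
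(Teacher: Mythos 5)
Your proposal is correct, but it reaches the heart of the theorem---the middle homology---by a genuinely different route than the paper. The paper never counts cardinalities: after the same Case 3 reduction, it identifies both sides explicitly as lattice subquotients. A five-lemma argument (its Step 1) shows $\pi^t_{(1)}$ maps $K(G^{(1)}_\pm)$ isomorphically onto $\ZZ^{\tilde{V}}_{\sym,=0}/L(\tilde{G}_\pm)(\ZZ^{\tilde{V}}_\sym)$ in Case 2, resp.\ onto an index-$2$ subgroup of $\ZZ^{\tilde{V}}_{\sym}/L(\tilde{G}_\pm)(\ZZ^{\tilde{V}}_\sym)$ in Case 1, and a nine-lemma argument (its Step 2), together with Proposition~\ref{connected-signed-graph-im-of-del} and Proposition~\ref{technical-proposition}(vi), shows $\ker \pi_{(2)}$ equals exactly that subquotient, resp.\ the full group; comparing the two gives exactness, resp.\ the $\ZZ_2$, structurally. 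You instead prove injectivity of $\pi^t_{(1)}$ directly by the $\iota$-symmetrization trick (nonsingularity of $L(\tilde{G}_\pm)$ in Case 1, $\ker L(\tilde{G}_\pm)=\ZZ\cdot\one$ in Case 2---the latter valid only under the paper's tacit Case-2 reduction, which you correctly invoke and which is genuinely necessary: without relabelling so that $G^{(1)}_\pm$ is the balanced factor, left exactness of \eqref{signed-graph-short-complex} can actually fail), and you then pin down the homology by pure counting, using the block identity $P+Q=L(G^{(1)}_\pm)$, $P-Q=L(G^{(2)}_\pm)$ for the $\iota$-blocks of $L(\tilde{G}_\pm)$, hence $\det L(\tilde{G}_\pm)=\det L(G^{(1)}_\pm)\det L(G^{(2)}_\pm)$, combined with $|K|=\tfrac12\det L$ for connected unbalanced signed graphs (Proposition~\ref{matrix-tree-like-prop}) and Kirchhoff's eigenvalue formula in the unsigned case. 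Your route buys a spectral fact of independent interest (the spectrum of $L(\tilde{G}_\pm)$ is the union of the spectra of $L(G^{(1)}_\pm)$ and $L(G^{(2)}_\pm)$) and a transparent explanation of the $\ZZ_2$ as the mismatch between the factors $\tfrac12$ and $\tfrac12\cdot\tfrac12$; the paper's route stays self-contained (no appeal to the spectral matrix-tree theorem) and locates the homology rather than only computing its order. Your $\iota$-eigenspace proof of \eqref{odd-prime-splitting} is likewise a clean, self-contained alternative to the paper's (which splits the odd-primary part of the now-exact sequence using $\pi_{(2)}\pi^t_{(2)}=2$).

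Two points you should make explicit to close the argument. First, your Case 1 counting needs that $G^{(1)}_\pm$ and $G^{(2)}_\pm$ are both unbalanced whenever $\tilde{G}_\pm$ is connected and unbalanced; this is true but deserves a line: if $G^{(1)}_\pm$ were balanced, switching it all-positive (with accompanying switches at both preimages of each switched vertex) would make every edge of $\tilde{G}_\pm$ positive, so $\tilde{G}_\pm$ would be balanced; for $G^{(2)}_\pm$ apply Proposition~\ref{asymmetry-illusory-prop}. (The paper uses the $G^{(1)}_\pm$ half of this silently in its own Step 1.) Second, your citation of Proposition~\ref{Case2-reduction-prop} for ``not both balanced in Case 2'' really relies on the first claim inside its proof (that $G^{(1,2)}_\pm$ balanced forces two components), not on the proposition as stated, whose hypothesis is that both factors are unbalanced; this is a correct but slightly off-label use worth flagging.
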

\begin{proof}
Note that the asserted splitting \eqref{odd-prime-splitting} will 
follow from the splitting in 
Corollary~\ref{signed-graph-double-cover-splitting},
once the assertions about the short complex are verified.

We first deal with the easy Case 3, where our
preparatory reductions allow one to assume that
$G^{(2)}_\pm = G^{(1)}_\pm$ and $\tilde{G}_\pm = G^{(1)}_\pm \sqcup G^{(1)}_\pm$.
Then setting $K:=K(G^{(1)})$, the sequence \eqref{signed-graph-short-complex}
becomes
$$
\begin{array}{rccccl}
0 \rightarrow &K& \overset{\pi^t_{(1)}}{\longrightarrow} 
      &K \oplus K& \overset{\pi_{(2)}}{\longrightarrow} &K \rightarrow 0\\
&x& \longmapsto &(x,x)&  & \\
& &  &(x,y)&\longmapsto &x-y   \\
\end{array}
$$
which is easily seen to be split exact.

In Cases 1,2, the arguments will resemble each other, and proceed according
to the following plan:
\begin{enumerate}
\item[{\sf Step 1.}]  Show $\pi^t_{(1)}$ maps
$K(G^{(1)}_\pm)$ isomorphically onto
$
\ZZ^{\tilde{V}}_{\sym,=0}/L(\tilde{G}_\pm)(\ZZ^{\tilde{V}}_\sym) 
$ 
in Case 2,\\ 
and isomorphically 
onto an index 2 subgroup of
$
\ZZ^{\tilde{V}}_{\sym}/L(\tilde{G}_\pm)(\ZZ^{\tilde{V}}_\sym) 
$ 
in Case 1.\\
\item[{\sf Step 2.}]  Show that 
$$
\ker\left( K(\tilde{G}_\pm) 
            \overset{\pi_{(2)}}{\rightarrow} 
              K(G^{(2)}_\pm) \right)
=
\begin{cases}
\ZZ^{\tilde{V}}_{\sym,=0}/L(\tilde{G}_\pm)(\ZZ^{\tilde{V}}_\sym) 
 & \text{ in Case 2},\\
\ZZ^{\tilde{V}}_{\sym}/L(\tilde{G}_\pm)(\ZZ^{\tilde{V}}_\sym) 
 & \text{ in Case 1}.\\
\end{cases}
$$
\end{enumerate}
Note that these would imply the
assertions of Case 1 and Case 2 from the theorem.
\vskip.2in
\noindent
{\sf Step 1.}
In Case 2, starting with 
the commuting square of Proposition~\ref{technical-proposition}(iv),
$$
\begin{CD}
\ZZ^{\tilde{V}} @>{ L(\tilde{G}_\pm)}>>  \ZZ^{\tilde{V}} \\
@A{  \pi^t_{(i)} }AA  
    @AA{ \pi^t_{(i)} }A \\
\ZZ^V @>{  L(G^{(i)}_\pm) }>> \ZZ^V
\end{CD}
$$
note that its bottom horizontal map restricts to a map 
$
\ZZ^V \overset{L(G^{(1)}}{\longrightarrow} \im \partial_{G^{(1)}_\pm},
$
since $\im L(G^{(1)}) \subset \im \partial_{G^{(1)}_\pm}$.  
Note also that its left vertical map 
restricts to an isomorphism 
$\ZZ^V \overset{\pi^t_{(1)}}{\longrightarrow} \ZZ^{\tilde{V}}_\sym$ 
according to Proposition~\ref{technical-proposition}(v).
Since we are in Case 2, 
Proposition~\ref{connected-signed-graph-im-of-del} implies
$\im \partial_{G^{(1)}_\pm}=\ZZ^V_{=0}$,
and one can easily
check that this isomorphism 
$\ZZ^V \overset{\pi_{(1)}^t}{\rightarrow} \ZZ^{\tilde{V}}_\sym$
restricts to an isomorphism sending $\im \partial_{G^{(1)}_\pm} (=\ZZ^V_{=0})$ 
isomorphically onto $\ZZ^{\tilde{V}}_{\sym,=0}$.
Thus one deduces that the commuting square of 
Proposition~\ref{technical-proposition}(iv) restricts to the following
square in which both vertical maps are isomorphisms induced by $\pi_{(1)}^t$:
$$
\begin{CD}
\ZZ^{\tilde V}_{\sym} @>{L(\tilde{G}_\pm)}>>  \ZZ^{\tilde{V}}_{\sym,=0} \\
@A{\pi_{(1)}^t}AA
@AA{\pi_{(1)}^t}A \\
\ZZ^V @>{L(G^{(1)}_\pm) }>> \im\partial_{G^{(1)}_\pm}  
\end{CD}
$$
\noindent
The five-lemma shows
$\pi^t_{(1)}$ induces an isomorphism
from the cokernel $K(G^{(1)}_\pm)$ 
of the bottom horizontal row here to the cokernel
$\ZZ^{\tilde{V}}_{\sym,=0}/L(\tilde{G}_\pm)(\ZZ^{\tilde V}_{\sym})$
of the top horizontal row here, as desired in Step 1 for Case 2.

When we are in Case 1, 
Proposition~\ref{connected-signed-graph-im-of-del} implies
that $\im \partial_{G^{(1)}_\pm} =\ZZ^V_{\equiv 0 \bmod{2}}$ is
an index $2$ sublattice of $\ZZ^V$, and hence 
$K(G^{(1)}_\pm)= \im \partial_{G^{(1)}_\pm}/\im L(G^{(1)}_\pm)$ is
an index $2$ subgroup of $\ZZ^V/\im L(G^{(1)}_\pm)$.  Therefore our
stated goal for Step 1 in Case 1 would be achieved if one could
show that $\pi^t_{(1)}$ maps $\ZZ^V/\im L(G^{(1)}_\pm)$ isomorphically
onto 
$
\ZZ^{\tilde{V}}_{\sym}/L(\tilde{G}_\pm)(\ZZ^{\tilde{V}}_\sym).
$ 
This is argued similarly to Case 2:  restricting
the commuting square of Proposition~\ref{technical-proposition}(iv)
gives this square with vertical isomorphisms
$$
\begin{CD}
\ZZ^{\tilde V}_{\sym} @>{L(\tilde{G}_\pm)}>>  \ZZ^{\tilde{V}}_{\sym} \\
@A{\pi_{(1)}^t}AA
@AA{\pi_{(1)}^t}A \\
\ZZ^V @>{L(G^{(1)}_\pm) }>> \ZZ^V  
\end{CD}
$$
and then the five-lemma shows 
$\pi^t_{(1)}$ induces an isomorphism from
the cokernel $\ZZ^V/\im L(G^{(1)}_\pm)$
of the bottom horizontal row to the cokernel
$\ZZ^{\tilde{V}}_{\sym}/L(\tilde{G}_\pm)(\ZZ^{\tilde V}_{\sym})$,
of the top horizontal row, as desired.

\vskip.1in
\noindent
{\sf Step 2.}
In both Cases 1,2, start reformulating 
$
\ker\left( K(\tilde{G}_\pm) 
            \overset{\pi_{(2)}}{\rightarrow} 
              K(G^{(2)}_\pm) \right)
$
via a diagram of short complexes  
\begin{equation}
\label{nine-diagram}
\begin{array}{rccccccl}
 & 0 & & 0 & & 0 & \\
 & \downarrow & & \downarrow & & \downarrow & \\
0 \rightarrow 
&\im L(\tilde{G}_\pm) \cap \ZZ^{\tilde{V}}_\sym&
    \longrightarrow & \im L(\tilde{G}_\pm) & 
      \overset{\pi_{(2)}}{\longrightarrow} &\im L(G^{(2)}_\pm) & 
        \rightarrow 0 \\
 & \downarrow & & \downarrow & & \downarrow & \\
0 \rightarrow 
&\im \del_{\tilde{G}_\pm} \cap \ZZ^{\tilde{V}}_\sym&
    \longrightarrow & \im \del_{\tilde{G}_\pm} & 
      \overset{\pi_{(2)}}{\longrightarrow} &\im \del_{G^{(2)}_\pm} & 
        \rightarrow 0 \\
 & \downarrow & & \downarrow & & \downarrow & \\
0 \rightarrow 
&\im \del_{\tilde{G}_\pm} \cap \ZZ^{\tilde{V}}_\sym \left/
    \im L(\tilde{G}_\pm) \cap \ZZ^{\tilde{V}}_\sym  \right.&
    \longrightarrow & K(\tilde{G}_\pm) & 
      \overset{\pi_{(2)}}{\longrightarrow} & K(G^{(2)}_\pm) & 
        \rightarrow 0 \\
 & \downarrow & & \downarrow & & \downarrow & \\
 & 0 & & 0 & & 0. & 
\end{array}
\end{equation}
in which the vertical sequences are all exact by definition.
We argue here why its horizontal rows 1,2,3 are 
also exact.  
The horizontal maps in rows 1 and 2 
come from an exact sequence derived from 
Proposition~\ref{technical-proposition}(v) 
\begin{equation}
\label{ambient-exact-sequence}
0 \longrightarrow \ZZ^{\tilde V}_\sym \longrightarrow \ZZ^{\tilde{V}}
    \overset{\pi_{(2)}}{\longrightarrow} \ZZ^V 
      \longrightarrow 0 
\end{equation}
which one intersects with the first two terms 
in this tower of inclusions:
$
\im  L(\tilde{G}_\pm)
\subset \im \del_{\tilde{G}_\pm} 
\subset \ZZ^{\tilde{V}}.
$
Thus rows 1 and 2 are exact at their left and 
middle positions due to the exactness of \eqref{ambient-exact-sequence}.
They are exact at their right positions due to 
Proposition~\ref{technical-proposition}(i) and (iii).
Hence rows 1 and 2 are exact, and then
by the nine-lemma, Row 3 
is also exact.

Exactness of Row 3 lets one reformulate
\begin{equation}
\label{nine-lemma-consequence}
\ker\left( K(\tilde{G}_\pm) 
            \overset{\pi_{(2)}}{\rightarrow} 
              K(G^{(2)}_\pm) \right) 
= \im \del_{\tilde{G}_\pm} \cap \ZZ^{\tilde{V}}_\sym  \left/
    \im  L(\tilde{G}_\pm) \cap \ZZ^{\tilde{V}}_\sym  \right. .
\end{equation}
Next we further simplify the numerator and denominator 
on the right side of \eqref{nine-lemma-consequence}.
First we claim that one can reformulate the numerator 
on the right side of \eqref{nine-lemma-consequence} as
$$
\im \del_{\tilde{G}_\pm} \cap \ZZ^{\tilde{V}}_\sym 
=
\begin{cases}
\ZZ^{\tilde{V}}_\sym & \text{ in Case 1},\\
\ZZ^{\tilde{V}}_{=0} \cap \ZZ^{\tilde{V}}_{\sym} 
=:\ZZ^{\tilde{V}}_{\sym,=0} & \text{ in Case 2}.
\end{cases}
$$
due to Proposition~\ref{connected-signed-graph-im-of-del}.
In Case 1, so that $\tilde{G}_\pm$ is unbalanced,
this proposition asserts 
that $\im \del_{\tilde{G}_\pm} =\ZZ^{\tilde{V}}_{\equiv 0 \bmod{2}}$,
which contains $\ZZ^{\tilde{V}}_\sym$ as a sublattice.
In Case 2, so that our preparatory reductions have us
assume $G^{(1)}_\pm$ and $\tilde{G}_\pm$ are unsigned,
this proposition asserts that 
$\im \del_{\tilde{G}_\pm} =\ZZ^{\tilde{V}}_{=0}$.

We next argue that one can reformulate the denominator
on the right side of \eqref{nine-lemma-consequence} as
$$
\im L(\tilde{G}_\pm) \cap \ZZ^{\tilde{V}}_\sym 
=
L(\tilde{G}_\pm)\left( \ZZ^{\tilde{V}}_\sym \right).
$$
To see this, note that its elements are those of the form
$L(\tilde{G}_\pm)(x)$ lying in
$\ZZ^{\tilde{V}}_\sym$, meaning that
$$
0 = \iota L(\tilde{G}_\pm)(x)
    -  L(\tilde{G}_\pm)(x)   
=L(\tilde{G}_\pm)( \iota(x) -x )
$$
via Proposition~\ref{technical-proposition}(vi).  
In Case 1, so that $\tilde{G}_\pm$ is unbalanced, 
equation \eqref{signed-Laplacian-nonsingular} in the proof 
Proposition~\ref{matrix-tree-like-prop} showed
that $L(\tilde{G}_\pm)$ is invertible, so
this is equivalent to $0=\iota(x)-x$, that is, $x$ lies in
$\ZZ^{\tilde{V}}_\sym$, as claimed.
In Case 2, because
$\tilde{G}_\pm$ is unsigned, one has $\delT^t_{\tilde{G}_\pm}=
\del^t_{\tilde{G}_\pm}$.
Thus since $\iota(x) -x$ lies in the kernel of
$$
L(\tilde{G}_\pm)=
\del_{\tilde{G}_\pm} \delT^t_{\tilde{G}_\pm}
=
\del_{\tilde{G}_\pm} \del^t_{\tilde{G}_\pm},
$$
it must also lie in the kernel of 
$\del^t_{\tilde{G}_\pm}$.  As $\tilde{G}$ is connected,
this means $\iota(x)-x$ has all its coordinates equal, which then
forces $\iota(x)-x=0$, that is, again $x$ lies in $\ZZ^{\tilde{V}}_\sym$.
Hence in this case one has
$$
\im  L(\tilde{G}_\pm) \cap \ZZ^{\tilde{V}}_\sym  =
L(\tilde{G}_\pm) ( \ZZ^{\tilde{V}}_\sym ).
$$
%

Thus we have reformulated the kernel
\eqref{nine-lemma-consequence} as
\begin{equation}
\label{reformulated-nine-lemma-consequence}
\ker\left( K(\tilde{G}_\pm) 
            \overset{\pi_{(2)}}{\rightarrow} 
              K(G^{(2)}_\pm) \right) =
\begin{cases}
\ZZ^{\tilde{V}}_\sym 
/L(\tilde{G}_\pm) \left( \ZZ^{\tilde{V}}_\sym \right)
& \text{ in Case 1},\\
\ZZ^{\tilde{V}}_{\sym,=0} 
/L(\tilde{G}_\pm) \left( \ZZ^{\tilde{V}}_{\sym} \right)
& \text{ in Case 2}
\end{cases}
\end{equation}
which was exactly our goal in Step 2.
\end{proof}

\section{Two applications of signed graph double covers}
\label{signed-graph-double-cover-application-section}
We conclude with two applications of 
Theorem~\ref{signed-graph-double-cover-complex},

\subsection{Application:
Crowns revisited}
\label{revisited-crown-section}

Recall from Example~\ref{crown-example}
that the unsigned multigraph $\crown_n^{(k)}$ is obtained from a complete 
bipartite graph $K_{n,n}$ by removing a perfect matching $M$ of edges,
and then replacing $M$ with $k$ copies of this same matching,
so that $\crown_n^{(k)}$ is $K_{n,n}$ together with $k-1$ added extra copies
of each edge in the matching $M$.  Also recall that
Corollary~\ref{first-crown-corollary} proved the following formula
$$
K(\crown_n^{(k)}) 
\cong \ZZ_n^{n-2} \oplus \ZZ_{n-2+2k}^{n-2} \oplus \ZZ_{(n-1+k)(n-2+2k)}^{n-2},
$$
under assumptions that 
\begin{itemize} 
\item $n$ is odd, and 
\item $k$ is even.
\end{itemize}

\begin{corollary}
Assuming $n$ is odd, this formula
for $K(\crown_n^{(k)})$ is correct, regardless of the parity of $k$.
\end{corollary}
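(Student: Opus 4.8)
The plan is to realize $\crown_n^{(k)}$ for odd $k$ as a signed graph double cover $\Double(G^{(1)}_\pm, G^{(2)}_\pm)$ in the sense of Definition~\ref{signed-graph-double-cover-definition}, exploiting the new feature that a half-loop can be doubly covered by a single edge. Set $j := \tfrac{k-1}{2}$, and let $G$ be the multigraph obtained from $K_n$ by attaching to each vertex $v^{(i)}$ both $j$ full loops and one half-loop. Take $G^{(1)}_\pm$ to be $G$ with every edge signed $+$, and $G^{(2)}_\pm$ to be $G$ with every edge signed $-$. Since $G^{(1)}_\pm, G^{(2)}_\pm$ disagree on the sign of every edge, in $\tilde{G}_\pm := \Double(G^{(1)}_\pm, G^{(2)}_\pm)$ each non-loop edge of $K_n$ lifts to a pair crossing the vertex cut $\{v_+\}_{v \in V} \leftrightarrow \{v_-\}_{v \in V}$, producing the $K_{n,n}$-minus-matching part; each full loop lifts to two matching edges; and each half-loop lifts to a single matching edge. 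This yields $2j+1 = k$ matching edges per vertex, and all edges inherit the sign $+$ from $G^{(1)}_\pm$, so one checks directly that $\tilde{G}_\pm$ is precisely the unsigned graph $\crown_n^{(k)}$.

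Next I would place this cover within the trichotomy preceding Theorem~\ref{signed-graph-double-cover-complex}. Since $\tilde{G} = \crown_n^{(k)}$ is connected and unsigned, hence balanced, we are in Case 2, with $G^{(1)}_\pm$ as the balanced factor. Theorem~\ref{signed-graph-double-cover-complex} then supplies the short exact sequence
$$
0 \rightarrow K(G^{(1)}_\pm) \rightarrow K(\crown_n^{(k)}) \rightarrow K(G^{(2)}_\pm) \rightarrow 0,
$$
splitting on all odd-primary components.

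I would then identify the two outer groups with those appearing in Corollary~\ref{first-crown-corollary}. Here $K(G^{(1)}_\pm)$ is the critical group of the unsigned $K_n$, since positive loops and positive half-loops are killed by both $\del$ and $\delT$ and hence alter neither $\im\del$ nor $\im\del\delT^t$; thus $K(G^{(1)}_\pm) \cong \ZZ_n^{n-2}$. The decisive point is $K(G^{(2)}_\pm)$: from the entry formula for $L(G_\pm)=\del\delT^t$, a negative full loop contributes $4$ and a negative half-loop contributes $2$ to the diagonal, so the diagonal entry at each vertex is $(n-1)+4j+2 = n-1+2k$ while the off-diagonal entries are $+1$. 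Hence $L(G^{(2)}_\pm) = M_n(n-2+2k,-1)$, identical to the matrix $L(-K_n^{(k/2)})$ of Corollary~\ref{first-crown-corollary}; and $G^{(2)}_\pm$ is unbalanced, so $\im \del_{G^{(2)}_\pm} = \ZZ^V_{\equiv 0 \bmod 2}$ by Proposition~\ref{connected-signed-graph-im-of-del}, exactly as there. Therefore $K(G^{(2)}_\pm)$ equals the group $K(-G)$ computed (for $n$ odd) in that corollary, the short exact sequence above coincides with \eqref{crown-short-exact-sequence}, and the remainder of the argument is verbatim: the sequence splits at odd primes, and since $n$ is odd the group $K(G^{(1)}_\pm) \cong \ZZ_n^{n-2}$ has trivial $2$-primary part, forcing a splitting at $p=2$ as well, which collates to the claimed formula with no constraint on the parity of $k$.

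The main obstacle is entirely in the bookkeeping of the double-cover construction, and its genuinely load-bearing instance is the Laplacian identification: one must verify that the asymmetric treatment of the negative half-loop by $\del$ versus $\delT$ makes it contribute $+2$ rather than $+4$ to the diagonal of $L(G^{(2)}_\pm)$. This is precisely what allows a single half-loop to stand in for the missing ``half'' of a full loop, so that $4j+2 = 2k$ reproduces the even-$k$ diagonal and renders $L(G^{(2)}_\pm)$ literally the same matrix as in the even case. Once this is in place, the entire computation is inherited from Corollary~\ref{first-crown-corollary}.
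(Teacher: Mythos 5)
Your proof is correct and takes essentially the same approach as the paper: realize $\crown_n^{(k)}$ as a signed graph double cover $\Double(G^{(1)}_\pm, -G^{(1)}_\pm)$ with base built on $K_n$, apply Case 2 of Theorem~\ref{signed-graph-double-cover-complex}, and inherit the computations of $K(G^{(1)}_\pm)\cong\ZZ_n^{n-2}$, $K(G^{(2)}_\pm)$, and the odd-$n$ splitting argument from Corollary~\ref{first-crown-corollary}. The only (immaterial) difference is bookkeeping: the paper attaches $k$ half-loops at each vertex, which covers both parities of $k$ uniformly, whereas you attach $(k-1)/2$ full loops plus one half-loop, which directly handles the new odd-$k$ case and yields the identical Laplacian $M_n(n-2+2k,-1)$.
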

\begin{proof}
Now that we can allow half-loops in our graphs, 
regardless of the parity of $k$, one can define the
unsigned multigraph 
$K_n^{(\frac{k}{2})}$ 
to be obtained from a complete graph
$K_n$ by adding $k$ copies of a (positive) half-loop to each vertex $v$.
Consider this as a signed graph $G^{(1)}_\pm:=K_n^{(\frac{k}{2})}$ 
and introduce its negative $G^{(2)}_\pm:=-K_n^{(\frac{k}{2})}$ as the signed graph 
obtained from a complete graph having all negative edges
by adding $k$ copies of a negative half-loop to each vertex $v$.

One can then check that $\crown_n^{(k)}$ is exactly the 
associated signed graph double covering
$
\Double(G^{(1)}_\pm,G^{(2)}_\pm).
$
Thus Case 2 of Theorem~\ref{signed-graph-double-cover-complex}
recovers a short exact sequence generalizing \eqref{crown-short-exact-sequence}
\begin{equation}
\label{signed-crown-short-exact-sequence}
0 \rightarrow K(K_n^{(\frac{k}{2})})
) \rightarrow  
    K(\crown_n^{(k)}) \rightarrow 
      K(-K_n^{(\frac{k}{2})})  \rightarrow 0. 
\end{equation}
The remainder of the proof of Corollary~\ref{first-crown-corollary}
showing that
$$
\begin{aligned}
K(K_n^{(\frac{k}{2})})
&=\ZZ_n^{n-2}\\ 
K(-K_n^{(\frac{k}{2})}) &= \ZZ_{n-2+2k}^{n-2} \oplus \ZZ_{(n-1+k)(n-2+2k)}
\text{ for }n\text{ odd},
\end{aligned}
$$
and that the sequence splits for $n$ odd,
still applies unchanged.
\end{proof}

\begin{remark}
\label{Machacek-n-even-crown-remark}
When {\it $n$ is even}, things are trickier.
However, with a bit more work the second author 
was able to use these methods to
derive the following 
formula for the case when 
{\it $n$ is even and $\gcd(k-1,n)=1$}:
$$
K(\crown_n^{(k)}) 
=
 \ZZ_{n - 2 + 2k}
\oplus \ZZ_{n(n - 2 + 2k)}^{n - 3} 
\oplus \ZZ_{n(n - 1 + k)(n - 2 + 2k)}.
$$
See Tseng \cite[Proposition 8.4]{Tseng}.  
In particular, when $k=0$ and $k=2$, this result applies to {\it all} even $n$,
and the $k=0$ case recovers the rest of the answer 
\eqref{Machacek's-crown-calculation} computed by 
Machacek \cite[Theorem 14]{Machacek}, that was
discussed for $n$ odd already in Example~\ref{crown-example} above.
\end{remark}

\subsection{Application:
Reinterpreting Bai's calculation for the $n$-cube}
\label{Bai-section}

\begin{definition}
Let $Q_n$ denote the unsigned graph of the {\it $n$-dimensional cube},
that is, its vertices are all binary vectors in $\{0,1\}^n$, and
two such vertices lie on an edge if they differ in exactly one coordinate.
\end{definition}

H. Bai calculated the structure of the $p$-primary component $\Syl_p K(Q_n)$
for all odd primes $p$, using an induction on $n$, 
that proceeded via consideration of cokernels
for a larger family of matrices.  
We use Theorem~\ref{signed-graph-double-cover-complex}
to reinterpret his calculation here geometrically, identifying 
these matrices as Laplacians for a larger family of signed graphs,
involved in a family of double covers.

\begin{definition}
For nonnegative integers $m,n$, consider the
the signed graph $Q_n^{(m)}$ whose underlying unsigned graph is the $n$-cube
$Q_n$ with $m$ added half-loops at each vertex, and with
voltage assignment $\beta$ in which 
all (nonloop) cube edges $e$ of $Q_n$ have $\beta(e)=+$,
and all the half-loops $e$ have $\beta(e)=-$.
\end{definition}

\begin{proposition}
\label{Bai-reinterpreted-proposition}
For $n \geq 1$, and for each odd prime $p$ one has
$$
\Syl_p K( Q_n^{(m)} ) 
\quad \cong \quad
\Syl_p K( Q_{n-1}^{(m)} )
\quad \oplus \quad
\Syl_p K( Q_{n-1}^{(m+1)} ).
$$
\end{proposition}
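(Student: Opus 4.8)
The plan is to realize the coordinate projection $Q_n \to Q_{n-1}$ as a signed graph double cover in the sense of Section~\ref{signed-graph-double-cover-section}, and then quote the odd-prime splitting \eqref{odd-prime-splitting} of Theorem~\ref{signed-graph-double-cover-complex}. The crucial point is that this splitting holds uniformly in all three cases of that theorem, so I never have to decide which case arises; the entire content becomes correct bookkeeping of the $\Double$ construction.

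First I would fix the base. Let $G$ be the multigraph $Q_{n-1}$ with $m+1$ half-loops attached at each vertex, and identify the vertex $(v,0)$ (resp. $(v,1)$) of $Q_n$ with the lift $v_+$ (resp. $v_-$) of the vertex $v$ of $Q_{n-1}$. On this common underlying graph define two signed graphs: let $G^{(1)}_\pm$ assign $+$ to every cube edge, $-$ to $m$ of the half-loops at each vertex, and $+$ to the remaining half-loop; let $G^{(2)}_\pm$ assign $+$ to every cube edge and $-$ to all $m+1$ half-loops. Next I would check directly from Definition~\ref{signed-graph-double-cover-definition} that $\Double(G^{(1)}_\pm, G^{(2)}_\pm) = Q_n^{(m)}$. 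The cube edges of $Q_{n-1}$, on which $G^{(1)}_\pm, G^{(2)}_\pm$ agree (both $+$), lift to the positive pairs $(u_+,v_+),(u_-,v_-)$, which are exactly the cube edges of $Q_n$ in the first $n-1$ directions. The $m$ half-loops on which the two signed graphs agree (both $-$) lift to pairs of negative half-loops at $v_+$ and $v_-$, which are the $m$ negative half-loops of $Q_n^{(m)}$. The single remaining half-loop, on which they disagree ($+$ in $G^{(1)}_\pm$, $-$ in $G^{(2)}_\pm$), lifts to a single positive edge $(v_+,v_-)$, which is the cube edge of $Q_n$ in the $n$-th coordinate direction. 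Matching endpoints and signs identifies the double cover with $Q_n^{(m)}$; this is precisely the squashing picture of Figure~\ref{cube-squashing-double-cover-figure}.

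Then I would observe that $K(G^{(1)}_\pm) \cong K(Q_{n-1}^{(m)})$. Indeed $G^{(1)}_\pm$ differs from $Q_{n-1}^{(m)}$ only by the extra positive half-loops, and by the definitions in Section~\ref{general-signed-graph-section} both $\del$ and $\delT$ send a positive half-loop to $0$; adding such zero columns changes neither $\im \del$ nor $L(G_\pm)=\del\delT^t$, hence not $K$. Also $G^{(2)}_\pm = Q_{n-1}^{(m+1)}$ by construction. Since $Q_{n-1}$ is connected for all $n \geq 1$, Theorem~\ref{signed-graph-double-cover-complex} applies, and \eqref{odd-prime-splitting} yields for every odd prime $p$
$$
\Syl_p K(Q_n^{(m)}) = \Syl_p K(G^{(1)}_\pm) \oplus \Syl_p K(G^{(2)}_\pm) = \Syl_p K(Q_{n-1}^{(m)}) \oplus \Syl_p K(Q_{n-1}^{(m+1)}),
$$
which is the desired statement.

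I do not anticipate a serious obstacle here, since the hard analytic work is already packaged inside Theorem~\ref{signed-graph-double-cover-complex}. The only step requiring genuine care is the sign bookkeeping in the double-cover identification, namely confirming that the positive $n$-th-direction cube edges correspond to the \emph{disagreeing} half-loop (giving a single covering edge) while the negative half-loops correspond to the \emph{agreeing} negative half-loops (giving covering pairs). Getting the $+/-$ conventions of $G^{(1)}_\pm$ and $G^{(2)}_\pm$ exactly right, so that the positive half-loop lands harmlessly in $G^{(1)}_\pm$ and the $(m+1)$-st negative half-loop lands in $G^{(2)}_\pm$, is the point where a sign error would propagate, and is where I would be most careful.
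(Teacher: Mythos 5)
Your proposal is correct and is essentially identical to the paper's own proof: it uses the same double cover $\Double(G^{(1)}_\pm,G^{(2)}_\pm)=Q_n^{(m)}$ with $G^{(1)}_\pm$ equal to $Q_{n-1}^{(m)}$ plus a positive half-loop at each vertex and $G^{(2)}_\pm=Q_{n-1}^{(m+1)}$, the same observation that positive half-loops contribute zero columns to $\del$ and $\delT$ and hence leave the critical group unchanged, and the same appeal to the odd-prime splitting \eqref{odd-prime-splitting} of Theorem~\ref{signed-graph-double-cover-complex}. You merely spell out the sign bookkeeping of the covering identification in more detail than the paper does.
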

\begin{proof}
Consider the signed graph double covering
$\tilde{G}_{\pm}=\Double( G^{(1)}_\pm, G^{(2)}_\pm )$
in which $G^{(1)}_\pm$ is obtained from $Q_{n-1}^{(m)}$
by adding one positive half-loop at each vertex, and
where $G^{(2)}_\pm=Q_{n-1}^{(m+1)}$.  Then
$\tilde{G}_\pm=Q_n^{(m)}$, since each positive half-loop on the
vertices of $G^{(1)}_\pm$ will be double covered by
a positive edge of $\tilde{G}$ ``stretched out'' into the
$n^{th}$ coordinate direction.
The example with
$n=3, m=2$ is pictured in Figure~\ref{cube-squashing-double-cover-figure},
with $\tilde{G}_\pm=Q^{(2)}_3 \rightarrow G^{(1)}_\pm$,
and $G^{(2)}_\pm=Q^{(3)}_2$.

As positive half-loops give rise to zero columns of
$\del$ and $\delT$, they have no effect on $K(G_\pm)$,
and hence $K(G^{(1)}_\pm) =K( Q_{n-1}^{(m)} )$.  Therefore
the splitting of \eqref{odd-prime-splitting} implies
the assertion of the proposition.
\end{proof}

\begin{corollary}(cf. Bai \cite[\S 2]{Bai})
For $n \geq 0$, and for each odd prime $p$ one has
$$
\Syl_p K( Q_n^{(m)} ) =
\Syl_p \left( \bigoplus_{k=0}^n \ZZ_{k+m}^{\binom{n}{k}} \right),
$$
and in particular, when $m=0$, the $n$-cube $Q_n$ has
$$
\Syl_p K( Q_n ) =
\Syl_p \left( \bigoplus_{k=0}^n \ZZ_k^{\binom{n}{k}} \right).
$$
\end{corollary}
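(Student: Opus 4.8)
The plan is to prove the formula by induction on $n$, using the recursion
$$
\Syl_p K( Q_n^{(m)} )
\cong
\Syl_p K( Q_{n-1}^{(m)} )
\oplus
\Syl_p K( Q_{n-1}^{(m+1)} )
$$
established in Proposition~\ref{Bai-reinterpreted-proposition}, which already carries the geometric content of the argument. Since that recursion decreases $n$ by one while possibly raising $m$, I would prove the displayed formula simultaneously for \emph{all} nonnegative integers $m$, with the odd prime $p$ held fixed throughout. Working only with $\Syl_p$ for odd $p$ is exactly what makes the induction painless, as it sidesteps the failure of the short complex of Theorem~\ref{signed-graph-double-cover-complex} to split at $p=2$.

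For the base case $n=0$, the graph $Q_0^{(m)}$ is a single vertex carrying $m$ negative half-loops. By the conventions of Section~\ref{general-signed-graph-section}, each such half-loop contributes the column $+2v$ to $\del$ and $+v$ to $\delT$, so that $\im \del = 2\ZZ$ inside $\ZZ^V = \ZZ$, while $L(Q_0^{(m)}) = \del \delT^t = (2m)$ and $\im L = 2m\ZZ$. Hence $K(Q_0^{(m)}) = \im \del / \im L = 2\ZZ / 2m\ZZ \cong \ZZ_m$, matching $\bigoplus_{k=0}^{0} \ZZ_{k+m}^{\binom{0}{k}} = \ZZ_m$ (in the degenerate case $m=0$ one interprets $\ZZ_0$ as $\ZZ$, whose $p$-primary part is trivial, so both sides still agree after applying $\Syl_p$).

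For the inductive step, assume the formula holds at level $n-1$ for every $m$. Applying the recursion and the inductive hypothesis gives
$$
\Syl_p K( Q_n^{(m)} )
\cong
\Syl_p \left( \bigoplus_{k=0}^{n-1} \ZZ_{k+m}^{\binom{n-1}{k}} \right)
\oplus
\Syl_p \left( \bigoplus_{k=0}^{n-1} \ZZ_{k+m+1}^{\binom{n-1}{k}} \right).
$$
Reindexing the second summand by $j=k+1$ rewrites it as $\bigoplus_{j=1}^{n} \ZZ_{j+m}^{\binom{n-1}{j-1}}$, so that for each $k$ with $0 \le k \le n$ the exponent attached to $\ZZ_{k+m}$ becomes $\binom{n-1}{k} + \binom{n-1}{k-1}$, using the boundary conventions $\binom{n-1}{n}=\binom{n-1}{-1}=0$. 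Pascal's rule collapses this to $\binom{n}{k}$, producing exactly $\bigoplus_{k=0}^{n} \ZZ_{k+m}^{\binom{n}{k}}$, as desired. Specializing to $m=0$ yields the stated formula for $\Syl_p K(Q_n)$.

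I do not anticipate a genuine obstacle: the real work is packaged in Proposition~\ref{Bai-reinterpreted-proposition}, and what remains is a formal induction. The only points needing care are the direct base-case computation over the reduced lattice $\im\del$ and the index bookkeeping in the Pascal reindexing; and one should keep in mind that the identity is an identity of $p$-primary components, so the infinite cyclic factors $\ZZ_0$ appearing when $m=0$ simply contribute nothing.
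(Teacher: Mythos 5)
Your proposal is correct and follows essentially the same route as the paper's own proof: the same base-case computation of $K(Q_0^{(m)}) = \im\del/\im\del\delT^t = 2\ZZ/2m\ZZ \cong \ZZ_m$, and the same induction on $n$ (uniformly in $m$) via Proposition~\ref{Bai-reinterpreted-proposition} and Pascal's rule. Your explicit handling of the degenerate $m=0$ factor $\ZZ_0$ is a small point of care the paper leaves implicit, but it does not change the argument.
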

\begin{proof}
Induct on $n$.  If $n=0$, 
the signed graph $Q_0^{(m)}$ has one vertex with $m$ negative half-loops,
so that
$$
\begin{aligned}
\del&=
\left[
\begin{matrix}
2 &2& \cdots&2
\end{matrix}
\right] \\
\delT&=
\left[
\begin{matrix}
1 &1& \cdots&1
\end{matrix}
\right] \\
K(Q_0^{(m)})& = \im \del/ \im \del \delT^t
= 2\ZZ / 2m\ZZ \cong \ZZ_m 
\left( = \bigoplus_{k=0}^n \ZZ_{k+m}^{\binom{n}{k}} \text{ for }n=0\right).
\end{aligned}
$$

In the inductive step, where $n>1$, applying
Proposition~\ref{Bai-reinterpreted-proposition}
gives
$$
\begin{aligned}
\Syl_p K( Q_n^{(m)} ) \quad
& =\Syl_p K( Q_{n-1}^{(m)} ) \quad \oplus \quad \Syl_p K( Q_{n-1}^{(m+1)} ) \\
& = \Syl_p \left( \bigoplus_{k=0}^{n-1} \ZZ_{k+m}^{\binom{n-1}{k}} \right) 
\oplus
\Syl_p \left( \bigoplus_{k=0}^{n-1} \ZZ_{k+m+1}^{\binom{n-1}{k}} \right)\\
&= \Syl_p \left( \ZZ_m^{\binom{n}{0}} \quad \oplus \quad
\bigoplus_{k=1}^{n-1} \ZZ_{k+m}^{\binom{n-1}{k}+\binom{n-1}{k-1}} \quad \oplus\quad 
\ZZ_{m+n}^{\binom{n-1}{n-1}} \right) \\
& =\Syl_p \left( \bigoplus_{k=0}^n \ZZ_{k+m}^{\binom{n}{k}} \right)
\end{aligned}
$$
where the second equality used the inductive hypothesis.
\end{proof}


\end{document}